\newtheorem{dummy}{dummy}[section]
\newtheorem{lemma}[dummy]{Lemma}
\newtheorem{theorem}[dummy]{Theorem}
\newtheorem{proposition}[dummy]{Proposition}
\newtheorem{example}[dummy]{Example}
\theoremstyle{definition}
\newtheorem{remark}[dummy]{Remark}
\newcommand{\Ga}{\Gamma}
\newcommand{\Si}{\Sigma}
\newcommand{\ep}{\epsilon}
\newcommand{\la}{\lambda}
\newcommand{\si}{\sigma}
\newcommand{\cD}{\mathcal{D}}
\newcommand{\cF}{\mathcal{F}}
\newcommand{\cG}{\mathcal{G}}
\newcommand{\cL}{\mathcal{L}}
\newcommand{\cO}{\mathcal{O}}
\newcommand{\cQ}{\mathcal{Q}}
\newcommand{\cS}{\mathcal{S}}
\newcommand{\cX}{\mathcal{X}}
\newcommand{\cW}{\mathcal{W}}
\newcommand{\bC}{ {\mathbb{C}} }
\newcommand{\bP}{ {\mathbb{P}} }
\newcommand{\bQ}{ {\mathbb{Q}} }
\newcommand{\bR}{ {\mathbb{R}} }
\newcommand{\bZ}{ {\mathbb{Z}} }
\newcommand{\bSi}{\mathbf{\Si}}
\newcommand{\bc}{\mathbf{c}}
\newcommand{\bt}{\mathbf{t}}
\newcommand{\Hom}{\mathrm{Hom}}
\newcommand{\can}{\mathrm{can}}
\newcommand{\coker}{\mathrm{coker}}
\newcommand{\Ext}{\mathrm{Ext}}
\newcommand{\Spec}{\mathrm{Spec}}
\newcommand{\fofr}{\mathfrak{or}}
\newcommand{\fm}{\mathfrak{m}}
\newcommand{\tM}{\widetilde{M}}
\newcommand{\tN}{\widetilde{N}}
\newcommand{\tT}{\widetilde{T}}
\newcommand{\tb}{\widetilde{b}}
\newcommand{\tp}{\widetilde{p}}
\newcommand{\tit}{\widetilde{t}}
\newcommand{\vc}{ {\vec{c}} }
\newcommand{\naive}{\mathit{naive}} 
\newcommand{\lra}{\longrightarrow}
\newcommand{\LS}{\Lambda_\Si}
\newcommand{\LbS}{\Lambda_{\bSi}}
\newcommand{\Perf}{\mathcal{P}\mathrm{erf}}
\newcommand{\Sh}{\mathit{Sh}}
\newcommand{\tri}{\triangle}
\newcommand{\Coh}{Coh}
\newcommand{\fin}{\mathit{fin}}
\newcommand{\ltr}{\langle \Theta \rangle}
\newcommand{\ltrp}{\langle\Theta'\rangle}
\begin{document}

\title[CCC and Fourier-Mukai Transforms]{The Coherent-Constructible Correspondence and Fourier-Mukai Transforms}

\author{Bohan Fang}
\address{Bohan Fang, Department of Mathematics, Columbia University,
2990 Broadway, New York, NY 10027}
\email{b-fang@math.columbia.edu}

\author{Chiu-Chu Melissa Liu}
\address{Chiu-Chu Melissa Liu, Department of Mathematics, Columbia University,
2990 Broadway, New York, NY 10027} \email{ccliu@math.columbia.edu}

\author{David Treumann}
\address{David Treumann, Department of Mathematics, Northwestern University,
2033 Sheridan Road, Evanston, IL 60208}
\email{treumann@math.northwestern.edu}

\author{Eric Zaslow}
\address{Eric Zaslow, Department of Mathematics, Northwestern University,
2033 Sheridan Road, Evanston, IL 60208}
\email{zaslow@math.northwestern.edu}

\dedicatory{Dedicated to Professor Loo-Keng Hua on the occasion of his 100th birthday}

\begin{abstract}
In \cite{Ka}, as evidence for his conjecture in birational log
geometry, Kawamata constructed a family of derived equivalences
between toric orbifolds.  In \cite{stack} we showed that the derived
category of a toric orbifold is naturally identified with a category
of polyhedrally-constructible sheaves on $\bR^n$.  In this paper we
investigate and reprove some of Kawamata's results from this perspective.
\end{abstract}

\maketitle

\tableofcontents

\section{Introduction}

\subsection{Coherent-constructible correspondence}

The coherent-constructible correspondence (CCC), defined in \cite{more, Tr} 
and first described by Bondal \cite{Bo}, is an equivalence between a category of coherent sheaves on a toric $n$-fold and a category of constructible sheaves on a compact $n$-torus $(S^1)^n$.  An equivariant version of this correspondence can be
viewed as a ``categorification'' of Morelli's description of the
equivariant K-theory of a toric variety in terms of a polytope
algebra \cite{Mo}.  Generalizing the familiar correspondence in toric
geometry between ample equivariant line bundles and moment polytopes,
the equivariant CCC provides an equivalence between torus-equivariant
coherent sheaves on a toric n-fold and constructible sheaves on
$\bR^n$, the universal cover of $(S^1)^n$.

The CCC was extended to toric Deligne-Mumford (DM) stacks in \cite{stack}.  
Toric DM stacks were defined by Borisov-Chen-Smith
in terms of stacky fans \cite{BCS}. In this paper we consider
toric orbifolds, which are toric DM stacks with generically
trivial stabilizers. Let $\cX_\bSi$ be a complete
toric orbifold defined by a stacky fan $\bSi=(N,\Si,\beta)$,
where $N\cong \bZ^n$, $\Si$ is a simplicial fan in 
$N_\bR:=N\otimes _\bZ\bR \cong \bR^n$.  It contains the torus
$T\cong (\bC^*)^n$ as a dense open subset.
The CCC for the toric orbifold $\cX_\bSi$ is the following quasi-equivalence of 
triangulated dg categories:
\begin{equation}\label{eqn:ccc}
\kappa_\bSi:\Perf_T(\cX_\bSi)\stackrel{\sim}{\longrightarrow} Sh_{cc}(M_\bR;\LbS)
\end{equation}
where $\Perf_T(\cX_\bSi)$ is the category of $T$-equivariant
perfect complexes on $\cX_\bSi$, and 
$Sh_{cc}(M_\bR;L_\bSi)$ is a category of constructible
sheaves on $M_\bR$ (the dual space of $N_\bR$)
characterized by a conical Lagrangian $\LbS \in T^*M_\bR$
determined by the stacky fan $\bSi$. (The
precise definitions of the categories in \eqref{eqn:ccc} will
be given in Section \ref{sec:ccc}.)  
Moreover, the functor $\kappa_{\bSi}$ is monoidal with respect
to the tensor product of coherent sheaves
on $\cX_\bSi$ and the convolution product
of constructible sheaves on $M_\bR$. Please note that since toric orbifolds are {\em smooth} DM stacks, 
the category $\Perf_T(\cX)$ is the same as the category $\Coh_T(\cX)$, and we will use both notations interchangeably throughout the paper.

\subsection{Fourier-Mukai Transforms}
The coarse moduli space of the toric orbifold
$\cX_{\bSi}$ is the toric variety $X_\Si$ defined
by the simplicial fan $\Si$. The toric orbifold
$\cX_\bSi$ is the DM stack associated to a log pair
$(X_\Si,B)$ in the sense of Kawamata \cite[Definition 2.1]{Ka}.
Kawamata considered pairs $(X,B)$ of varieties and $\bQ$-divisors
which have smooth local coverings. For such a pair he
associated a DM stack $\cX$, such that
$p^*(K_X+B)=K_{\cX}$, where $p:\cX\to X$ is the canonical
map to the coarse moduli space. 
He conjectured that if there is an equivalence
of log canonical divisors between birationally
equivalent pairs, then there is an equivalence of
derived categories of the associated DM stacks \cite[Conjecture 2.2]{Ka}. 
He proved his conjecture for quasi-smooth toroidal pairs.
Here we briefly describe his results in the toric case; please
see \cite[Theorem 4.2]{Ka} for the precise statements, in 
the toroidal case (which includes the toric case as a special
case). Let $\cX_1$ and $\cX_2$ be toric orbifolds
associated to projective toric log pairs $(X_1,B)$ and $(X_2,C)$, respectively.
Suppose that $\cX_1$ and $\cX_2$  are $K$-equivalent \cite{Wa} in the sense that
there exists a toric orbifold $\cW$ and proper
birational morphisms $\mu_i:\cW\to \cX_i$ of toric orbifolds
such that $\mu_1^*K_{\cX_1}=\mu_2^*K_{\cX_2}$. Then
there is an equivalence of triangulated categories:
$D^bCoh(\cX_2)\cong D^bCoh(\cX_1)$.
Indeed, Kawamata proved that if $\mu_1^*K_{\cX_1}\geq \mu_2^*K_{\cX_2}$
and the birational map $f:X_1 \dashrightarrow X_2$ is (1) the identity, (2) a divisorial
contraction, (3) the inverse of a divisorial contraction, or (4) a flip, 
then the Fourier-Mukai functor $F'_{12}=\mu_{1*}\circ \mu_2^*$ 
\footnote{We use $F_{12}'$ instead of $F_{12}$ since the notation without prime is reserved for the induced functor on constructible sheaves.}
is fully faithful; it is an equivalence
when $\mu_1^*K_{\cX_1} =\mu_2^* K_{\cX_2}$. The McKay correspondence
for abelian quotient singularities is a special case of (2) or (3). 

\subsection{CCC and Fourier-Mukai transforms}
It is natural to expect that Kawamata's theorem \cite[Theorem 4.2]{Ka} holds 
in the equivariant setting, and that one can use CCC 
to give an elementary proof. The following square 
of functors commutes up to natural isomorphisms 
$$
\begin{CD}
\Perf_T(\cX_2) & @>{\kappa_2}>>  & Sh_{cc}(M_\bR;\Lambda_{\bSi_2})\\
@V{F'_{12}}VV & &  @V{F_{12}}VV \\
\Perf_T(\cX_1) & @>{\kappa_1}>> &  Sh_{cc}(M_\bR;\Lambda_{\bSi_1})
\end{CD}
$$
where $\kappa_i$ are quasi-equivalences. So it suffices to prove
that $F_{12}$ is cohomologically full and faithful in various
cases (1)--(4). In this paper, we provide such elementary proofs
in cases (1), (2), and (3).   
We describe $F_{12}$ and $F_{12}'$ explicitly in terms of theta sheaves (see
Section \ref{sec:theta}) which are building blocks
of the CCC. Our proofs do not rely on the vanishing theorems
in \cite{KMM}.

Note that $F_{12}$ and $F'_{12}$ do not preserve
the monoidal structures.

\begin{remark}
Although we are restricting to \emph{toric orbifolds}, i.e. toric DM stacks with a generically trivial stabilizer, 
Proposition 3.1 of \cite{stack} shows that for any toric DM stack $\cX$, the dg category $\Perf_{\mathcal T}(\cX)\cong \Perf_T(\cX^\mathrm{rig})$ 
where $\mathcal T$ is the DM torus acting on $\cX$, and the orbifold $\cX^{\mathrm{rig}}$ is the rigidification of $\cX$.
(See \cite{FMN} for definitions.) 
Thus the result of this paper implies the functor $\Perf_{\mathcal T_2}(\cX_2)\to \Perf_{\mathcal T_1}(\cX_1)$ is a 
quasi-embedding in cases (1), (2) and (3). 
\end{remark}

\subsection{A simple example: McKay correspondence for the $A_1$-singularity}
$X_2=\bC^2/\bZ_2$ is the $A_1$-singularity,
$X_1=\cO_{\bP^1}(-2)$ is its crepant resolution, and
$\cX_1=X_1$ and $\cX_2=[\bC^2/\bZ_2]$ are the canonical
toric orbifolds associated to $X_1$ and $X_2$ respectively.
In this case both $F'_{12}$ and $F'_{21}$ are equivalences, as shown in Figure \ref{fig:A1}.
\begin{figure}[h]
\psfrag{F12}{\small $F_{12}$}
\psfrag{F21}{\small $F_{21}$}
\includegraphics[scale=0.5]{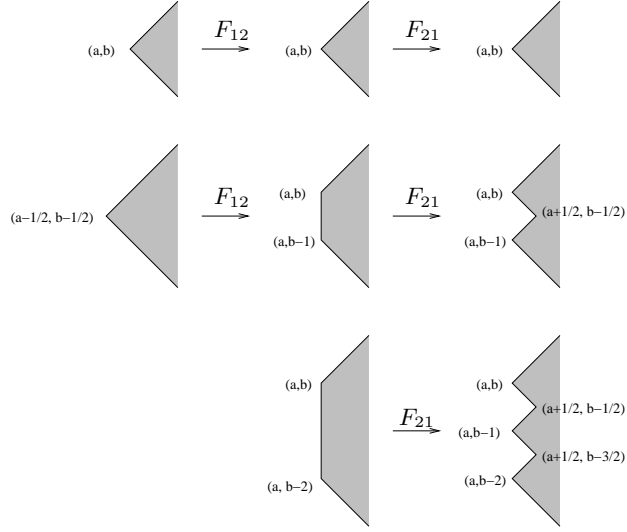}
\caption{$a, b$ are integers. The sheaves are costandard sheaves supported on the shaded area (see Section \ref{sec:constructible} for the definition).}
\label{fig:A1}
\end{figure}

\subsection{Outline}
In Section \ref{sec:definition}, we give a brief introduction to toric orbifolds.
In Section \ref{sec:ccc}, we give a leisure exposition
of the CCC of toric orbifolds; we also state the CCC for toric varieties.
In Section \ref{sec:FM}, we elaborate
Kawamata's theorem in the equivariant setting
from the perspective of constructible sheaves.

\subsection*{Acknowledgments} We thank H.-H. Tseng for bringing \cite[Theorem 4.2]{Ka}
to our attention. We thank Y. Kawamata for his helpful comments on a draft of this paper.

\section{Toric Orbifolds} 
\label{sec:definition}

In \cite{BCS}, Borisov-Chen-Smith introduced toric DM stacks.
In this paper we will consider the case of \emph{toric orbifolds}. A toric
orbifold is a toric DM stack with trivial generic stabilizer.

\subsection{The Stacky Fan}
Let $N\cong \bZ^n$ be a free abelian group,  and let $\Si$ be a simplicial fan in $N_\bR :=N\otimes_{\bZ}\bR \cong \bR^n$. The pair $(N,\Si)$ defines a simplicial toric variety $X_\Si$ of dimension $n$ (see \cite{Fu}). Let $\Si(1)=\{\rho_1,\ldots, \rho_r\}$
be the set of 1-dimensional cones in the fan $\Si$, and let $v_i\in N$ be the unique generator of the semigroup $\rho_i\cap N$, so that $\rho_i\cap N=\bZ_{\geq 0}v_i$. 

A \emph{stacky fan} $\bSi$ is defined as the data $(N,\Si,\beta)$, where 
$$
\beta:\tN:=\oplus_{i=1}^r \bZ \tb_i\cong \bZ^r \to N\cong\bZ^n
$$ 
is a group homomorphism sending $\tb_i$ to $b_i = n_i v_i\in \bZ_{>0}v_i$.
If $n_i=1$ for $i=1,\ldots,r$ then the corresponding map is denoted by $\beta_\can$. 
We assume that $\{ v_1,\ldots, v_r\}$ span $N_\bR$, which implies the cokernel of 
$\beta: \tN \to N$ is finite. 

\begin{example}
\label{ex:fan1}
$N=\bZ^2$, $\tN=\bZ^3$. The fan $\Si$ and the map $\beta$ are shown in Figure \ref{fig:fan1}.
\begin{figure}[h]
\psfrag{S}{}
\psfrag{BETABETABETABETABETA}{\small $\begin{CD}
\tN=\bZ^3 @>{\beta=\beta_\can=\left[\begin{array}{ccc}1&-1&0\\0&-2&1\end{array}\right]} >>
N=\bZ^2.
\end{CD}$}
\includegraphics[scale=0.5]{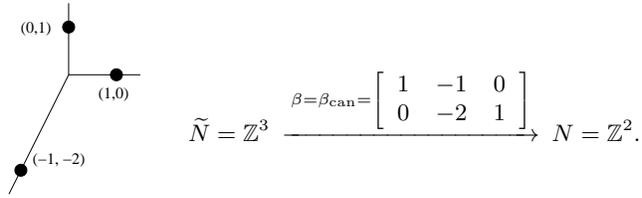}
\caption{The stacky fan $\bSi=(N,\Si,\beta)$, with $\Si$ and $\beta$ shown above. The dots are $b_1=(1,0)$,
$b_2=(-1,-2)$, and $b_3=(0,1)$.}
\label{fig:fan1}
\end{figure}
\end{example}

We next consider a 1-dimensional example in which $\beta\neq \beta_\can$.
\begin{example}
\label{ex:fan2}
$N=\bZ$, $\tN=\bZ$. The fan $\Si$ and the map $\beta$ are shown in Figure \ref{fig:fan2}.
\begin{figure}[h]
\psfrag{BETABETABETABETABETA}{\small $
\begin{CD}
\tN=\bZ @> {\beta=[\begin{array}{cc}3&-1\end{array} ]}>> N=\bZ.
\end{CD}
$}
\includegraphics[scale=0.5]{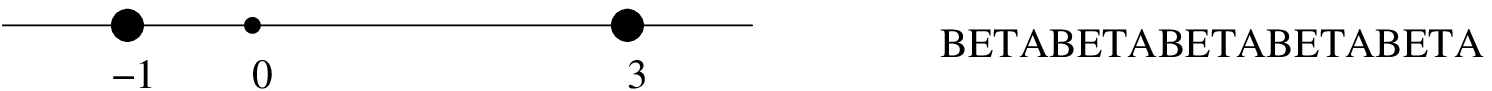}
\caption{The stacky fan $\bSi=(N,\Si,\beta)$, with $\Si$ and $\beta$ shown above. The larger dots are $b_1=3$ and $b_2=-1$.}
\label{fig:fan2}
 \end{figure}
\end{example}

\subsection{Construction of the Toric Orbifold}
Let $M=\Hom(N;\bZ)$ be the dual lattice of $N$, and let
$\tM=\Hom(M;\bZ)$ be the dual lattice of $\tN$. Since
$\beta: \tN\to N$ has a finite cokernel,  the dual map $\beta^*:M\to \tM$ is injective. Applying $\Hom(-,\bC^*)$ to the following short exact sequence 
$$
0\to M\stackrel{\beta^*}{\lra} \tM \stackrel{\beta^\vee}{\lra} \coker(\beta^*)\to 0,
$$
we obtain another short exact sequence
$$
1\to G_{\bSi}\to \tT \to T\to 1,
$$
where $G_{\bSi}:=\Hom(\coker(\beta^*),\bC^*)$ is isomorphic to the direct
product of $(\bC^*)^{r-n}$ and a finite abelian group, and
$$
\tT=\Hom(\tM,\bC^*)\cong (\bC^*)^r, \quad
T=\Hom(M,\bC^*)\cong (\bC^*)^n.
$$

The torus $\tT\cong (\bC^*)^r$ acts on $\bC^r=\Spec\bC[z_1,\ldots, z_r]$.
Let $I_\Si \subset \bC[z_1,\ldots, z_r]$ be
the ideal generated by $\{ \prod_{\rho_i\not \subset \si} z_i\mid \sigma\in \Si\}$.
(Note that the ideal $I_\Si$ depends on $N$ and $\Si$ but not on $\beta$.)
Let $Z(I_\Si)\subset \bC^r$ be the closed subscheme defined by $I_\Si$,
and let $U_\Si=\bC^r-Z(I_\Si)$, which is a Zariski open set of $\bC^r$.
We define $\cX_\bSi$ to be the quotient stack:
$$
\cX_\bSi = [U_\Si /G_\bSi ].
$$
The simplicial toric variety $X_\Si$ defined by 
$\Si$ can be identified with the geometric quotient
$$
X_\Si = U_\Si/G_{\bSi^\can},
$$
where $\bSi^\can=(N,\Si,\beta_\can)$.  
We have a 2-cartesian diagram
$$
\begin{CD}
U_\Si @>{\tp}>> U_\Si\\
@VVV @ VVV\\
\cX_\bSi @>{p}>> X_\Si,
\end{CD}
$$
where
\begin{equation}\label{eqn:tp}
\tp(z_1,\ldots, z_r) = (z_1^{n_1}, \ldots, z_r^{n_r}).
\end{equation}

We have the following properties regarding to $\cX_\bSi$:
\begin{itemize}
\item $\cX_{\bSi}$ is an orbifold, i.e. it is a smooth DM stack with generically trivial
stabilizers.
\item There is an open dense embedding 
$T=\tT/G_\bSi \hookrightarrow \cX_\bSi=[U_\Si/G_\bSi]$, and
the action on $T$ on itself extends to a $T$-action on $\cX_\bSi$.
\item The coarse moduli space of the orbifold $\cX_\bSi$ is
the simplicial toric variety $X_\Si$. The projection $p:\cX_\bSi\to X_\Si$
restricts to the identity map from $T$ to itself.
\end{itemize}

\begin{example}[$\bP(1,1,2)$: Example \ref{ex:fan1} continued] 
\label{ex:stack1}
The stacky fan $\bSi=(N,\Si,\beta)$ is defined as in Example \ref{ex:fan1}. Taking $\Hom(-,\bC^*)$ of the following exact sequence
$$
\begin{CD}
0\to M=(\bZ^2)^* @>{\beta^*=\left[\begin{array}{cc} 1 & 0\\ -1& -2\\ 0&1\end{array}\right]}>>
\tM=(\bZ^3)^* @>{\beta^\vee=\left[\begin{array}{ccc}1 & 1 & 2 \end{array}\right]}>> \bZ\to 0.
\end{CD}
$$
produces
$$
1 \to  G_\bSi=\bC^*  \lra  \tT=(\bC^*)^3 \lra  T=(\bC^*)^2 \to  1.
$$
The map $G_\bSi\to \tT$ is given by  $\la \mapsto (\la,\la,\la^2)$. Therefore, the toric orbifold $\cX_\bSi$ is defined as
$$
\cX_\bSi=\left[(\bC^3-\{(0,0,0)\})/\bC^* \right] =\bP(1,1,2).
$$
where $\bC^*$ acts on $\bC^3$ by $\la\cdot (z_1,z_2,z_3)
=(\la z_1,\la z_2,\la^2 z_3)$. The corresponding coarse moduli space is the following simplicial toric variety
$$
X_\Si = (\bC^3-\{(0,0,0)\})/\bC^* =\mathbf{P}(1,1,2).
$$
It has a unique singularity at $[0,0,1]$. 

\end{example}

\begin{example}[$\bP(1,3)$: Example \ref{ex:fan2} continued]
\label{ex:stack2}
The stacky fan $\bSi=(N,\Si,\beta)$ is defined as in Example \ref{ex:fan2}. 
Then $\beta_\can=[1\; -1]:\bZ^2\to \bZ$. There is a commutative diagram
\begin{equation}\label{eqn:quasi}
\xymatrix{
1 \ar[r] & G_\bSi=\bC^* \ar[r]^\phi \ar[d]^{\hat{p}} 
& \tT=(\bC^*)^2 \ar[r]^\pi \ar[d]^{\tp} 
& T=\bC^* \ar[r] \ar[d]^{p} & 1\\
1 \ar[r] & G_{\bSi_\can}=\bC^* \ar[r]^{\phi_\can} 
& \tT=(\bC^*)^2 \ar[r]^{\pi_\can} & T=\bC^* \ar[r] &1
}
\end{equation}
where the rows are short exact sequences of abelian groups.
The arrows are group homomorphisms given explicitly as follows:
$$
\phi(\la)=(\la,\la^3),\quad \pi(\tit_1,\tit_2)= \tit_1^3 t_2^{-1},\quad
\phi_\can(\la)=(\la,\la),\quad \pi_\can(\tit_1,\tit_2)= \tit_1 \tit_2^{-1},
$$
$$
\hat{p}(\la)=\la^3,\quad \tp(\tit_1,\tit_2)=(\tit_1^3, \tit_2),\quad 
p(t)= t.
$$

The toric orbifold defined by $\bSi$ is a weighted projective line:
$$
\cX_\bSi=\left[(\bC^2-\{(0,0)\})/\bC^* \right] =\bP(1,3),
$$
where $\bC^*$ acts on $\bC^2$ by $\la\cdot (z_1,z_2)
=(\la z_1,\la^3 z_2)$. The coarse moduli space is the projective line:
$$
X_\Si=(\bC^2-\{(0,0)\})/\bC^*,
$$ 
where $\bC^*$ acts on $\bC^2$ by 
$\la\cdot (z_1,z_2)=(\la z_1, \la z_2)$.
\end{example}

We say $X_\bSi$ is a {\em complete} toric orbifold
if $\Si$ is a complete fan in $N_\bR$, or equivalently,
the coarse moduli space $X_\Si$ is a complete toric variety.
For example,   $\bP(1,1,2)$ and $\bP(1,3)$ are complete
toric orbifolds.

\subsection{Divisors and line bundles}
Let $\widetilde{D}_i\subset U_\Si$ be the divisor defined by $z_i=0$, and
let $D_i$  and $\cD_i$ denote the  corresponding $T$ divisors
in $X_\Si$ and $\cX_\bSi$, respectively. Let $p:\cX_\bSi\to X_\Si$
be the canonical map to the coarse moduli space. By \eqref{eqn:tp}, 
$p^*D_i=n_i \cD_i$.

In general, $D_i$ is a $\bQ$-Cartier divisor: there exists some
positive integer $k$ such that $\cO_{X_\Si}(kD_i)$ is a line bundle on $X_\Si$. On
the other hand, $\cO_{\cX_\bSi}(\cD_i)$ is always a line bundle on the toric
orbifold $\cX_\Si$. Indeed, any $T$-equivariant line bundle on $\cX_\bSi$ is of the form
$$
\cL_\vc =\cO_{\cX_\bSi}(\sum_{i=1}^r c_i \cD_i), \quad
\vc=(c_1,\ldots, c_r)\in \bZ^r.
$$

\subsection{Relation to log pairs}\label{sec:log-pair}

In \cite{Ka}, Kawamata considers pairs of varieties with $\bQ$-divisors
which have local covering by smooth varieties, and
associates a Deligne-Mumford stack to such a pair \cite[Definition 2.1]{Ka}.
We now relate toric orbifolds to the Deligne-Mumford stacks in 
\cite[Definition 2.1]{Ka}.

Let $\cX_\bSi$ be the toric orbifold defined by a stacky fan $\bSi=(N,\Si,\beta)$, and
let $X_\Si$ be the simplicial toric variety defined by the simplicial fan $\Si\subset N_\bR$.
Let $p:\cX_\bSi\to X_\Si$ be the canonical map to the coarse moduli space.
Let $n_i \in \bZ_{>0}$ be defined as before. Define a $\bQ$-divisor
$$
B=\sum_{i=1}^r(1-\frac{1}{n_i})D_i
$$
on $X$. Then the pair $(X_\Si,B)$ satisfies the condition in \cite[Definition 2.1]{Ka}, and
the associated Deligne-Mumford stack to this pair is exactly $\cX_\bSi$. 
Let $K_{\cX_\bSi}$ and $K_{X_\Si}$ denote the canonical divisors
on $\cX_\bSi$ and $X_\Si$, respectively. We have the following identities:
\begin{equation}\label{eqn:canonical-divisor}
K_{X_\Si}= -\sum_{i=1}^r D_i,\quad
K_{X_\Si}+ B=- \sum_{i=1}^r \frac{1}{n_i}D_i,
\quad p^*(K_{X_\Si}+B)= -\sum_{i=1}^r \cD_i = K_{\cX_\bSi}.
\end{equation}
In particular, when $n_1=\cdots =n_r=1$, $B=0$, and the Deligne-Mumford
associated to the pair $(X_\Si,0)$ is $\cX_{\bSi_\can}$.

\section{Coherent-Constructible Correspondence}
\label{sec:ccc}

The coherent-constructible correspondence relates
equivariant coherent sheaves on a toric orbifold of
dimension $n$ to certain constructible sheaves on 
a real vector space of dimension $n$. Before
we give the precise statement of the coherent-constructible 
correspondence, we need to review some definitions.

We will use the language of dg categories throughout.
If $C$ is a dg category, then $hom(x,y)$ denotes
the chain complex of homomorphisms between objects $x$ and $y$ of
$C$.  We will continue to use $\Hom(x,y)$ to denote hom sets in non-dg settings.  We will regard the differentials in all
chain complexes as having degree $+1$, i.e. $d:K^i \to K^{i+1}$.  If
$K$ is a chain complex (of vector spaces or sheaves, usually) then
$h^i(K)$ will denote its $i$th cohomology object. If $C$ is a dg category, then $Tr(C)$ denotes  the
triangulated dg category generated by $C$, and
$D(C)$ denotes the cohomology category $H(Tr(C)).$ The triangulated category
$H(Tr(C))$ is sometimes called the \emph{derived category} of $C$.

\subsection{Coherent and quasicoherent sheaves on toric orbifolds}
We refer to \cite[Definition 7.18]{Vi} for the definitions of
quasicoherent sheaves, coherent sheaves, and vector bundles on a general
Deligne-Mumford stack.  If $\cX$ is a Deligne-Mumford stack, let $\cQ(\cX)^\naive$ denote the dg category of bounded complexes of quasicoherent sheaves on $\cX$, and let $\cQ(\cX)$ denote the localization of this category with respect to acyclic complexes.
We use $\Perf(\cX) \subset \cQ(\cX)$ to denote the full dg subcategories
consisting of \emph{perfect} objects---that is, objects which are quasi-isomorphic to bounded complexes of vector bundles.

We now spell out the above definitions for a toric orbifold $\cX_\bSi=[U_\Si/G_\bSi].$ By \cite[Example 7.21]{Vi},
the category of coherent sheaves on $\cX_\bSi$ is equivalent to the category of $G_\bSi$-equivariant coherent sheaves on $U_\Si$.
Similarly, the category of quasicoherent sheaves on $\cX_\bSi$ is equivalent to the category of
$G_\bSi$-equivariant quasicoherent sheaves on $U_\Si$. Therefore,
\begin{equation}
\label{eqn:nonequiv}
\cQ(\cX_\bSi) = \cQ_{G_\bSi}(U_\Si),\quad \Perf(\cX_\bSi)=\Perf_{G_\bSi}(U_\Si).
\end{equation}
We define the category of $T$-equivariant coherent (resp. quasicoherent) 
sheaves on $\cX$ to be equivalent to the category of $\tT$-equivariant coherent (resp. quasicoherent) 
sheaves on $U$: 
\begin{equation}
\label{eqn:equiv}
\cQ_{T}(\cX_\bSi) = \cQ_{\tT}(U_\Si),\quad \Perf_{T}(\cX_\bSi)=\Perf_{\tT}(U_\Si).
\end{equation}
There is a monoidal product structure $\otimes$ on these various dg categories of sheaves on $\cX_\bSi$, simply given by the tensor product of quasi-coherent sheaves on $U_\Si$.

\subsection{Constructible sheaves}
\label{sec:constructible}

We refer to \cite{KS} for the microlocal theory of sheaves. If $X$
is a topological space we let $\Sh(X)$ denote the dg category of
chain complexes of sheaves of $\bC$-vector spaces on $X$,
localized with respect to acyclic complexes (see \cite{Dr} for localizations
of dg categories).  If $X$ is a
real-analytic manifold, $\Sh_c(X)$ denotes the full subcategory of
$\Sh(X)$ of objects whose cohomology sheaves are bounded and constructible with
respect to a real-analytic stratification of $X$. Denote by
$\Sh_{cc}(X) \subset \Sh_c(X)$ the
full subcategory of objects which have compact support. We use
$D_c(X)$ and $D_{cc}(X)$ to denote the derived categories $D(Sh_c(X))$
and $D(Sh_{cc}(X))$ respectively.

The \emph{standard constructible sheaf} on the submanifold $i_{Y}:
Y\hookrightarrow X$  is defined as the push-forward of the constant
sheaf on $Y$, i.e. $i_{Y*} \bC_Y,$ as an object in $Sh_c(X)$. The
Verdier duality functor $\cD: Sh_c^\circ(X)\to \Sh_c(X)$ takes
$i_{Y*}\bC_Y$ to the \emph{costandard constructible sheaf} on $X$.
We know $\cD(i_{Y*}\bC_Y)=i_{Y!}\cD(\bC_Y)=i_{Y!} \omega_Y$. Here
$\omega_Y=\cD(\bC_Y)=\fofr_Y[\dim Y]$, where $\fofr_Y$ is the
orientation sheaf of $Y$ (with respect to the base ring $\bC$).

We denote the singular support of a complex of sheaves $F$ by
$SS(F) \subset T^*X$.   If $X$ is a real-analytic manifold and
$\Lambda \subset T^*X$ is an $\bR_{\geq 0}$-invariant Lagrangian
subvariety, then $\Sh_c(X;\Lambda)$ (resp. $\Sh_{cc}(X;\Lambda)$)
denotes the full subcategory of $\Sh_c(X)$ (resp. $\Sh_{cc}(X)$)
whose objects have singular support in $\Lambda$. For any open subset $U\subset X$, the singular support of the associated standard and costandard sheaves are given by the following theorem.
\begin{theorem}[Schmid-Vilonen]
$$
SS(i_!\omega_U) =  \lim_{\ep\to 0^+} \Ga_{-\ep d\log m},\quad\quad
SS(i_*\bC_U) = \lim_{\ep\to 0^+} \Ga_{\ep d\log m}.
$$
where $m:M_\bR\to \bR_{\geq 0}$, 
$m\bigr|_U >0$, $m\bigr|_{\partial U} =0$.
\end{theorem}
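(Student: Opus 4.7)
The plan is to verify the identity locally near $\partial U$ and to pass from one half of the statement to the other by Verdier duality. First interpret $\lim_{\ep\to 0^+}\Ga_{\ep d\log m}$ as the set of limit points in $T^*M_\bR$ of sequences $(y_k,\ep_k\,d\log m(y_k))$ with $\ep_k\to 0^+$ and $y_k\in U$. For fixed $y\in U$, $\ep\,d\log m(y)\to 0$, so the intersection of the limit with $T^*_yM_\bR$ is the zero section; outside $\bar U$ the limit is empty. The same description holds for $SS(i_*\bC_U)$ since $i_*\bC_U$ is locally constant on $U$ and vanishes off $\bar U$. Hence both identities reduce to a computation on $\partial U$.

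Next, localize at a point $p\in\partial U$ and choose real-analytic coordinates $(x_1,\ldots,x_n)$ centered at $p$ in which $U=\{x_1>0\}$ and $m=x_1\cdot h$ with $h$ smooth and positive. Then
\begin{equation*}
d\log m=\frac{dx_1}{x_1}+d\log h,
\end{equation*}
and $d\log h$ is bounded near $p$. For a sequence $(y_k,\ep_k\,d\log m(y_k))$ to have a nonzero cotangent limit, the first coordinate of $y_k$ must tend to $0^+$ with $\ep_k$ of the same order; this kills the $d\log h$ contribution. Hence the limit over $p$ equals the ray $\{\xi\,dx_1:\xi\geq 0\}$ for $\ep\,d\log m$ and the opposite ray for $-\ep\,d\log m$.

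To match these rays with the singular support, apply the Kashiwara--Schapira Morse criterion: $(p,\xi_0\,dx_1)\in SS(F)$ iff for some test function $\phi$ with $\phi(p)=0$ and $d\phi(p)=\xi_0\,dx_1$ one has $(R\Ga_{\{\phi\geq 0\}}F)_p\neq 0$. Applied to $F=i_*\bC_U$ with $\phi(x)=\xi_0 x_1$: for $\xi_0>0$ the set $\{\phi<0\}$ is disjoint from $\bar U$ near $p$, so $(Rj_*j^*F)_p=0$ and $(R\Ga_{\{\phi\geq 0\}}F)_p\cong F_p=\bC$; for $\xi_0<0$ the set $\{\phi<0\}$ equals $U$ near $p$, the canonical map $F_p\to(Rj_*j^*F)_p$ is the identity $\bC\to\bC$, and the cone vanishes. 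This identifies $SS(i_*\bC_U)$ at $p$ with the expected ray and so matches the right-hand side. The formula for $SS(i_!\omega_U)$ then follows from $\cD(i_*\bC_U)\cong i_!\omega_U$ together with the fact that Verdier duality transforms singular support by the antipodal involution on cotangent fibers, which sends $\Ga_{\ep d\log m}$ to $\Ga_{-\ep d\log m}$.

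The main obstacle is securing the local model $m=x_1\cdot h$ with $dm\neq 0$ along $\partial U$. For the open sets arising in applications of the CCC---typically open stars of cells in a polyhedral decomposition of $M_\bR$---the boundary is piecewise linear, and a defining function in this normal form exists along each open codimension-one face but not at lower-dimensional faces. There the argument must be stratified, and one must check that the closure of the per-stratum limits, assembled from the open faces, recovers the full $\lim_{\ep\to 0^+}\Ga_{\ep d\log m}$. This subanalytic bookkeeping is where the remaining technical work lives.
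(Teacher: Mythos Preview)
The paper does not prove this theorem; it is stated as a result of Schmid and Vilonen and used as a black box, with only illustrative examples (the interval, the disk, the square) given afterward. So there is no in-paper proof to compare your attempt against.

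On its own merits, your argument is correct for the smooth-boundary case and captures the essential mechanism: the blow-up of $d\log m$ along $\partial U$ produces conormal directions in the limit, the sign of $\ep$ selects the inward or outward ray, and Verdier duality swaps the two. However, as you yourself acknowledge in the final paragraph, the reduction to the local model $m=x_1 h$ with $h>0$ only works at smooth points of $\partial U$. The general Schmid--Vilonen theorem applies to subanalytic open sets (including the polyhedral sets used throughout this paper), and at corners your normal-form argument simply does not apply. The stratified ``bookkeeping'' you allude to is not a formality: at a corner the limit set over $p$ is a full-dimensional cone in $T^*_pM_\bR$, not a single ray, and matching this with $SS$ requires either the Kashiwara--Schapira functorial estimates for open embeddings or the full Schmid--Vilonen limit machinery for characteristic cycles. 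As written, your proof is a correct sketch of the codimension-one case together with an honest admission that the general case remains undone.
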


\begin{example}\label{interval}
Let $U=(0,1)\subset \bR$. Figure \ref{fig:standLag} depicts the standard Lagrangian on $U$ in $T^*\bR\cong \bR^2$, while Figure \ref{fig:singular} depicts the singular supports of standard and costandard constructible sheaves supported on this interval.
\begin{figure}[h]
\psfrag{1}{\small $m=x(1-x)$}
\psfrag{2}{\small $\log m = \log x + \log(1-x)$}
\psfrag{3}{\small $d\log m = \frac{dx}{x}-\frac{dx}{1-x}.$}
\includegraphics[scale=0.4]{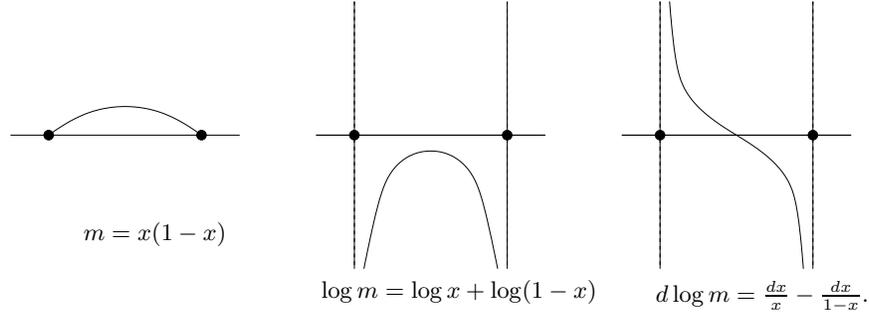}
\caption{The graphs of $m$, $\log m$ and $d\log m$. The graph of $d\log m$ is the standard Lagrangian over $U$.}
\label{fig:standLag}
\end{figure}
\begin{figure}[h]
\psfrag{standard}{$SS(i_*\bC_U)$}
\psfrag{costandard}{$SS(i_!\omega_U)$}
\includegraphics[scale=0.7]{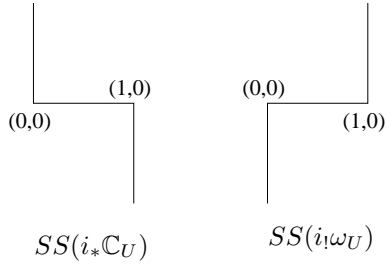}
\caption{Singular supports of standard and costandard sheaves associated to $U=(0,1)$}
\label{fig:singular}
\end{figure}
\end{example}

Given a submanifold $Y\subset X$, let $T_Y^*X$ denote the conormal bundle of $Y$ in $X$. $T_Y^*X$ is a Lagrangian
submanifold of $T^*X$.

\begin{example}[open sets with smooth boundaries]\label{disk}
Let $U$ be an open subset of $\bR^n$, and suppose
that the boundary $\partial U$ is a smooth $n-1$-dimensional submanifold
of $\bR^n$. (This includes Example \ref{disk} as a special case.)

Let $\nu:\partial U\to T^*_{\partial U}\bR^n$ be a nowhere zero
section such that $\nu_x(v_x)>0$ if $v_x$ is an outward normal at $x\in \partial U$.
Then
$$
T^*_U\bR^n= U\times \{0\}\subset T^*\bR^n,\quad
T^*_{\partial U}\bR^n=\{(x,t \nu_x)\mid x\in \partial U, t\in \bR \}.
$$
\begin{eqnarray*}
SS(i_*\bC_U)&=& T^*_U\bR^n\cup \{ (x,t\nu_x)\mid x\in \partial U, t\leq 0\}, \\
SS(i_!\omega_U)&=& T^*_U\bR^n \cup \{ (x,t\nu_x)\mid x\in \partial U, t\geq  0\}.
\end{eqnarray*}
For example, let $D$ be an open disk in $\bR^2$, and identify conormal vectors
with normal vectors. The singular supports of $i_*\bC_D$ and $i_!\omega_D$ are
depicted in Figure \ref{fig:disk} below.
\begin{figure}[h]
\psfrag{standard}{$SS(i_*\bC_D)$}
\psfrag{costandard}{$SS(i_!\omega_D)$}
\includegraphics[scale=0.5]{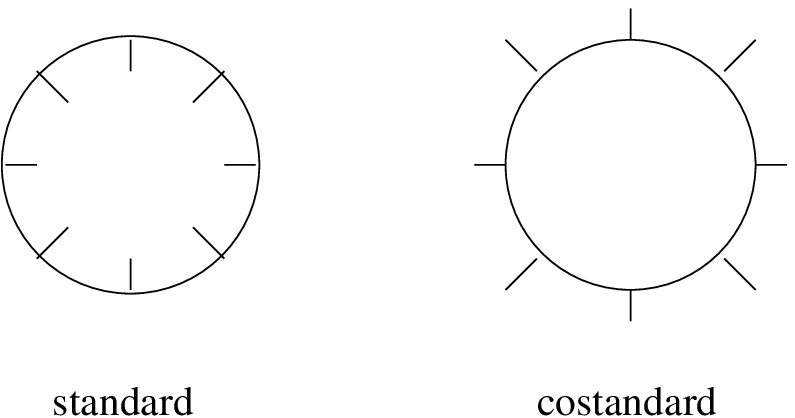}
\caption{Singular supports of standard and costandard sheaves associated to an open disk $D\subset \bR^2$}
\label{fig:disk}
\end{figure}
\end{example}

\begin{example}[manifold with corners]
We can also consider an open set $U$ in $\bR^n$ such that the closure
$\overline{U}$ of $U$ is a manifold with corners. 
For example, an open square $R$ in $\bR^2$. 
\begin{figure}[h]
\psfrag{standard}{$SS(i_*\bC_R)$}
\psfrag{costandard}{$SS(i_!\omega_R)$}
\includegraphics[scale=0.6]{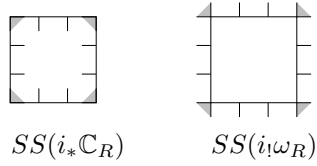}
\caption{Singular supports of standard and costandard sheaves associated to an open square $R\subset \bR^2$ }
\label{fig:square}
\end{figure}
\end{example}


There is a monoidal structure $\star$ on the dg category $Sh_c(X)$ when $X$ is an abelian group, given by the convolution product of constructible sheaves:
$$
\cF\star \cG = a_! (\underbrace{\cF \boxtimes \cG}_{\in Sh(X \times X)}),
$$
where $a: X \times X \to X$ is the addition map of the abelian group $X$. This product turns out to be a tensor product (commutative monoidal).

\subsection{Theta sheaves}
\label{sec:theta}

In \cite{stack} we have introduced the concept of \emph{theta sheaves}, as building blocks of coherent-constructible 
correspondence. Let $\cX_\bSi$ be the toric orbifold defined by a stacky fan $\bSi=(N,\Si,\beta)$.

\subsubsection{Quasicoherent theta sheaves}
The quasicoherent theta sheaves are certain $T$-equivariant quasicoherent
sheaves on $\cX_\bSi$ that arise in the \v{C}ech resolution with respect to an
equivariant open cover of $\cX_\bSi$. We first describe this
open cover.  Given a $d$-dimensional cone $\si\in \Si$, let 
$z_\si = \prod_{\rho_i \not\subset \si} z_i$. Then
$$
U_\si=\{ (z_1,\ldots, z_r)\in \bC^r\mid z_\si\neq 0\}\cong \bC^d\times (\bC^*)^{r-d}
$$ 
is a Zariski open subset of 
$$
U_{\bSi}=\bigcup_{\si\in\Si} U_\si, 
$$
The open embedding $U_\si\hookrightarrow U_\bSi$ descends to an open embedding
of stacks:
$$
j_\si: \cX_\si:=[U_\si/G_\bSi]\hookrightarrow \cX_\bSi= [U_\bSi/G_\bSi].
$$
Then $\{ \cX_\si\mid \si\in \Si\}$ is an open cover of $\cX_\bSi$.

We now describe $T$-equivariant line bundles on $\cX_\si$, or equivalently,
the $\tT$-equivariant line bundles on $U_\si$. We first introduce some notation.
\begin{itemize}
\item Let $M_\bR:=M\otimes \bR$ be the dual vector space of $N_\bR$, and let
$\langle -, - \rangle: M_\bR\times N_\bR\to \bR$ be the natural pairing.
\item Given a $d$-dimensional cone $\si\in \Si$, let $N_\si\subset N$ be the subgroup generated by
$\{ b_i\mid \rho_i\subset \si\}$, and $M_\si$ be the dual lattice
$M_\si=\Hom(N_\si,\bZ)$. Then $N_\si$ and $M_\si$ are free abelian groups of
rank $d$. Let $\langle -, - \rangle_\si: M_\si\times N_\si\to \bZ$
be the natural pairing. 
\end{itemize}
The $T$-equivariant line bundles on $\cX_\si$ are in one-to-one correspondence
with the elements in $M_\si$. Let $\cO_{\cX_\si}(\chi)$ denote
the $T$-equivariant line bundle on $\cX_\si$ associated to $\chi\in M_\si$. 

We define the quasicoherent theta sheaf $\Theta'(\si,\chi)$ to be the
pushforward of $\cO_{\cX_\si}(\chi)$ under the open embedding
$j_\si:\cX_\si\hookrightarrow \cX_\bSi$.
$$
\Theta'(\si,\chi):=j_{\si*} \cO_{\cX_\si}(\chi).
$$

\subsubsection{Constructible theta sheaves}
For a cone $\si\in \Si$ and a character $\chi\in M_\si$, we fix the following notation:
\begin{eqnarray*}
\si^\vee_\chi &=& \{ x\in M_\bR \ \langle x, v \rangle \ge \langle \chi, v \rangle_\si,\ v\in N_\si\cap \si \},\\
(\si^\vee_\chi)^\circ &=& \{x\in M_\bR| \langle x, v \rangle> \langle \chi, v \rangle_\si,\ v\in N_\si\cap \si \},\\
\si^\perp_\chi&=&\{x\in M_\bR| \langle x, v \rangle= \langle \chi, v \rangle_\si,\ v\in N_\si\cap \si \}.
\end{eqnarray*}

We define the constructible theta sheaf $\Theta(\si,\chi)$ to be the costandard constructible
sheaf associated to the open set $(\si^\vee_\chi)^\circ$ in $M_\bR$.
$$
\Theta(\si,\chi):= i_{(\si_\chi^\vee)^\circ !} \omega_{ (\si_\chi^\vee)^\circ} \in Ob(Sh_c(M_\bR)),
$$
where $i_{(\si_\chi^\vee)^\circ}: (\si^\vee_\chi)^\circ \hookrightarrow M_\bR$ is the inclusion.

\subsection{The coherent-constructible correspondence}
The theta sheaves are indexed by the set 
$$
\Ga(\bSi)=\{(\si,\chi)|\si\in \Si, \chi\in M_\si\}.
$$
We define a partial order on $\Ga(\bSi)$:
$$
(\si_1,\chi_1)\leq (\si_2,\chi_2) \textup{ if and only if }(\si_1)^\vee_{\chi_1}\subset (\si_2)^\vee_{\chi_2}.
$$
The ``linearized" dg category $\Gamma(\bSi)_\bC$ consists of objects $(\si,\chi)\in\Gamma(\bSi)$ and the following morphisms with obvious composition rules 
$$
hom((\si_1,\chi_1),(\si_2,\chi_2))=
\begin{cases}\bC[0]\text{, if $(\si_1,\chi_1)\leq (\si_2,\chi_2)$;}\\ 
0\text{, otherwise.}\end{cases}
$$ 
It is proved in \cite{stack} that
\begin{equation}\label{eqn:Theta-Theta-prime}
\begin{aligned}
 &hom_{\cQ_T(\cX_\bSi)}(\Theta'(\si_1,\chi_1),\Theta'(\si_2,\chi_2))\\
=&hom_{Sh_c(M_\bR)}(\Theta(\si_1,\chi_1),\Theta(\si_2,\chi_2))\\
=&hom((\si_1,\chi_1),(\si_2,\chi_2))
\end{aligned}
\end{equation}
for any $(\si_1,\chi_1),(\si_2,\chi_2)\in \Ga(\bSi)$.

Let $\ltr_\bSi$ (resp. $\ltrp_{\bSi}$) be the full triangulated
subcategory of $Sh_c(M_\bR)$ (resp. $\cQ_T(\cX_\bSi)$)  generated by all $\Theta(\si,\chi)$ (resp. $\Theta'(\si,\chi)$).
Then \eqref{eqn:Theta-Theta-prime} implies that
$\ltr_\bSi$ and $\ltrp_\bSi$ are quasi-equivalent as triangulated 
dg categories.  We proved that this quasi-equivalence is monoidal:
\begin{theorem} \label{thm:ccc_theta}
There is a quasi-equivalence monoidal functor $\kappa_\bSi: \ltrp_\bSi\to\ltr_\bSi$, 
which sends $\Theta'(\si,\chi)$ to $\Theta(\si,\chi)$ for $(\si, \chi)\in \Ga(\bSi)$.
\end{theorem}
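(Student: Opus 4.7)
The plan is to extract $\kappa_\bSi$ from the hom-complex computation \eqref{eqn:Theta-Theta-prime} and then verify monoidality by direct computation on generators.

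First, \eqref{eqn:Theta-Theta-prime} identifies both the full dg subcategory spanned by $\{\Theta'(\si,\chi)\}$ in $\cQ_T(\cX_\bSi)$ and the one spanned by $\{\Theta(\si,\chi)\}$ in $Sh_c(M_\bR)$ with the linearized poset dg category $\Ga(\bSi)_\bC$. By the universal property of the pre-triangulated envelope (see \cite{Dr}), both $\ltrp_\bSi$ and $\ltr_\bSi$ are then quasi-equivalent to $Tr(\Ga(\bSi)_\bC)$, yielding a dg quasi-equivalence $\kappa_\bSi: \ltrp_\bSi \to \ltr_\bSi$ that sends $\Theta'(\si,\chi)\mapsto \Theta(\si,\chi)$ and acts as the identity on hom-complexes of generators.

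To upgrade $\kappa_\bSi$ to a monoidal functor I would prove the two matching identities
\begin{align*}
\Theta'(\si_1,\chi_1) \otimes \Theta'(\si_2,\chi_2) &\;\cong\; \Theta'(\si_{12},\,\chi_1|_{\si_{12}}+\chi_2|_{\si_{12}}),\\
\Theta(\si_1,\chi_1) \star \Theta(\si_2,\chi_2) &\;\cong\; \Theta(\si_{12},\,\chi_1|_{\si_{12}}+\chi_2|_{\si_{12}}),
\end{align*}
where $\si_{12}:=\si_1\cap\si_2 \in \Si$, using that any two cones of a fan meet in a common face. The first identity follows from flat base change along the open embeddings $j_\si$, combined with $\cX_{\si_1}\cap\cX_{\si_2}=\cX_{\si_{12}}$ and the observation that $\cO(\chi_1)\otimes\cO(\chi_2)$ restricted to $\cX_{\si_{12}}$ is $\cO(\chi_1+\chi_2)$. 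The second rests on the polyhedral duality $\si_1^\vee + \si_2^\vee = \si_{12}^\vee$ (so that $(\si_1)^\vee_{\chi_1}+(\si_2)^\vee_{\chi_2}=(\si_{12})^\vee_{\chi_1+\chi_2}$ as Minkowski sums of translated cones), together with proper base change for the convolution: the fibers of the addition map $a: M_\bR\times M_\bR\to M_\bR$ meet $(\si_1^\vee_{\chi_1})^\circ\times(\si_2^\vee_{\chi_2})^\circ$ in open convex sets of dimension $n$, each contributing a single copy of $\bC$ in top compactly supported cohomology, which assembles into $\omega_{(\si_{12}^\vee_{\chi_1+\chi_2})^\circ}$ extended by zero.

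These matching formulas define the structure isomorphism $\mu_{A,B}:\kappa_\bSi(A)\star\kappa_\bSi(B)\xrightarrow{\sim}\kappa_\bSi(A\otimes B)$ on pairs of generators; bifunctoriality of $\otimes$ and $\star$ then extends $\mu$ across the triangulated envelopes. The coherence hexagon and pentagon reduce on generators to the commutativity and associativity of the pairing $((\si_1,\chi_1),(\si_2,\chi_2))\mapsto (\si_{12},\chi_1+\chi_2)$, both of which are manifest from the symmetry of intersection of cones and addition of characters. The main obstacle I anticipate is arranging strict compatibility of $\mu$ with differentials and higher coherence; to handle this I would represent $\Theta'(\si,\chi)$ by an explicit \v{C}ech resolution on the cover $\{\cX_\si\}_{\si\in\Si}$ (as in \cite{stack}) and $\Theta(\si,\chi)$ by an explicit convex-geometric model for $(\si_\chi^\vee)^\circ$, chosen so that the tensor and convolution identifications above are realized by canonical chain maps which satisfy the coherence relations on the nose.
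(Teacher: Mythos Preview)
The paper does not actually prove this theorem: it is stated as a result established in \cite{stack}. The only in-text justification given here is the sentence preceding the theorem, namely that the hom-complex identity \eqref{eqn:Theta-Theta-prime} already forces $\ltr_\bSi$ and $\ltrp_\bSi$ to be quasi-equivalent, and that the monoidality was proved previously. Your first paragraph reproduces exactly this argument for the quasi-equivalence, so on that point you agree with the paper verbatim.

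Your monoidality sketch goes beyond what the present paper contains. The two product identities you write down are correct and are indeed the core of the argument in \cite{stack}: on the coherent side one uses that each $\Theta'(\si,\chi)$ is, affine-locally, a localization of the coordinate ring (hence flat), so the underived tensor product computes the derived one and yields the semigroup ring on $(\si_1^\vee+\si_2^\vee)_{\chi_1+\chi_2}=(\si_{12}^\vee)_{\chi_1+\chi_2}$; on the constructible side the convex-fiber computation you describe is exactly right. One small caution: neither $\Theta'(\{0\},0)=j_{0*}\cO_T$ nor $\Theta(\{0\},0)=\omega_{M_\bR}$ is the monoidal unit, so the unit constraints must be checked after passing to the triangulated envelope (where $\cO_\cX$ and the skyscraper at $0$ appear as \v{C}ech complexes of theta sheaves). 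You already flag the coherence issue; in \cite{stack} this is handled by working with explicit functorial models for the theta sheaves, essentially as you propose in your last paragraph.
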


By \v{C}ech resolution, the dg category of coherent sheaves $\Perf_T(\cX_\bSi)$ is a full subcategory of $\ltrp_\bSi$, as shown in \cite{stack}. 
Restricted to $\Perf_T(\cX_\bSi)$, the functor $\kappa_\bSi$ is a quasi-embedding 
(full and faithful at the cohomology level). We have characterized the image $\kappa_\bSi(\Perf_T(\cX_\bSi))$ as 
a full sub-category of $\ltr_\bSi$, as in the following theorem.
\begin{theorem}[coherent-constructible correspondence for toric orbifolds]
\label{thm:ccc}
Let $\cX_\bSi$ be a complete toric orbifold defined
by a stacky fan $\bSi=(N,\Si,\beta)$. Then there is
a quasi-equivalence of monoidal triangulated dg categories: 
$$
\kappa_\bSi: \Perf_T(\cX_\bSi)\stackrel{\sim}{\lra} \Sh_{cc}(M_\bR,\LbS). 
$$
\end{theorem}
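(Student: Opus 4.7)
The plan is to build Theorem~3.2 on top of Theorem~3.1 by upgrading the quasi-equivalence on the full theta subcategories $\ltrp_\bSi \simeq \ltr_\bSi$ to an identification of the specified full subcategories on both sides. The inclusion $\Perf_T(\cX_\bSi)\hookrightarrow \ltrp_\bSi$ is already in hand from \cite{stack} via the \v{C}ech resolution associated to the open cover $\{\cX_\si\}_{\si\in \Si}$: any $T$-equivariant perfect complex $\cF$ restricts on each $\cX_\si$ to a complex of $T$-equivariant line bundles, which is a finite sum of $\cO_{\cX_\si}(\chi)$, and the \v{C}ech complex for $\cF$ is then a finite complex built out of $j_{\si\ast}\cO_{\cX_\si}(\chi) = \Theta'(\si,\chi)$. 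Composing with $\kappa_\bSi$ yields a quasi-embedding $\Perf_T(\cX_\bSi)\to \ltr_\bSi$, so what remains is to identify its essential image with $\Sh_{cc}(M_\bR;\LbS)$.

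The first half is inclusion: I would show $\kappa_\bSi(\Perf_T(\cX_\bSi))\subseteq \Sh_{cc}(M_\bR;\LbS)$. The singular support condition is automatic from the theta building blocks: each $\Theta(\si,\chi)$ is a costandard sheaf on an open convex cone $(\si_\chi^\vee)^\circ$ whose boundary hyperplanes are conormal to the rays of $\si$, so $SS(\Theta(\si,\chi))\subset \LbS$, and singular support is preserved under the cone construction in the triangulated closure. For the compact support condition I would use completeness of $\Si$: the \v{C}ech differentials force cancellations among the unbounded cones $\si_\chi^\vee$ as one sums over the maximal cones of $\Si$, and because the fan is complete the intersection of all these cones is bounded. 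Concretely, writing out the \v{C}ech resolution of the structure sheaf $\cO_{\cX_\bSi}$ and more generally $\cO_{\cX_\bSi}(\sum c_i \cD_i)$ one sees that the resulting complex of theta sheaves has total support equal to the moment polytope shifted by $\vc$, hence compact. Since perfect objects are \v{C}ech-generated by such equivariant line bundles, compact support is inherited.

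The second half is essential surjectivity: every object $F\in \Sh_{cc}(M_\bR;\LbS)$ lies in $\kappa_\bSi(\Perf_T(\cX_\bSi))$. The strategy is stratification-theoretic. The singular support condition $SS(F)\subset \LbS$ forces $F$ to be constructible with respect to the stratification of $M_\bR$ defined by the hyperplane arrangement dual to the rays of $\Si$; compact support plus this constructibility gives a bounded decomposition into costandard sheaves on intersections of half-spaces of the form $\{\langle x,v_i\rangle \ge c_i\}$, each of which is a $\Theta(\si,\chi)$ (or a finite extension thereof). Choosing such a decomposition produces, under $\kappa_\bSi^{-1}$ on the theta subcategory, a complex of $\Theta'(\si,\chi)$'s whose totalization is the \v{C}ech resolution of a $T$-equivariant perfect complex on $\cX_\bSi$.

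The hard part is this last essential-surjectivity step. One needs to verify not merely that $F$ can be resolved by theta sheaves (which is already ensured by its singular support and compact support properties combined with the generation statement underlying Theorem~3.1), but that the resulting resolution satisfies the combinatorial cocycle conditions that make it the \v{C}ech complex of an honest $T$-equivariant perfect complex, i.e.\ that one can descend the piecewise line-bundle data on each $\cX_\si$ to compatible gluing on overlaps. This amounts to checking that the partial order $\Ga(\bSi)$ and the composition of morphisms match the face relations in $\Si$ compatibly with twists by characters in $M_\si$. Finally, the monoidal structure transports across because $\kappa_\bSi$ is monoidal on theta subcategories by Theorem~3.1 and both $\otimes$ on $\Perf_T(\cX_\bSi)$ and convolution $\star$ on $\Sh_{cc}(M_\bR)$ preserve the respective subcategories, so the induced equivalence is automatically monoidal.
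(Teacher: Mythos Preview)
The paper does not contain its own proof of this theorem; it is quoted as a result established in \cite{stack}, with the surrounding discussion (the \v{C}ech-resolution embedding $\Perf_T(\cX_\bSi)\hookrightarrow\ltrp_\bSi$ and the remark on $\bQ$-ample line bundles) serving only as context. So there is no in-paper argument to compare against, and your proposal should be read as a sketch of the argument in \cite{stack}.

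Your outline follows the correct architecture: reduce to Theorem~\ref{thm:ccc_theta}, then match the two full subcategories. The inclusion $\kappa_\bSi(\Perf_T)\subset\Sh_{cc}(M_\bR;\LbS)$ is handled in the right spirit, though the compact-support argument is loose. It is not that ``the intersection of all these cones is bounded,'' and the image is the costandard sheaf on a moment polytope only in the $\bQ$-ample case. The precise statement one proves is that for \emph{any} equivariant line bundle the image of its \v{C}ech resolution under $\kappa_\bSi$ is quasi-isomorphic to a compactly supported object (a twisted-polytope sheaf), with completeness of $\Si$ guaranteeing boundedness; general perfect objects then follow by triangulated closure.

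The more substantive gap is your framing of essential surjectivity. You say the hard part is verifying ``combinatorial cocycle conditions'' so that the theta resolution becomes ``the \v{C}ech complex of an honest $T$-equivariant perfect complex.'' But $\kappa_\bSi$ is already a quasi-equivalence $\ltrp_\bSi\simeq\ltr_\bSi$, so once you know $F\in\ltr_\bSi$ its preimage $\cF\in\ltrp_\bSi\subset\cQ_T(\cX_\bSi)$ exists automatically; there is no gluing or cocycle problem to solve. The genuine content splits into two independent steps: first, that the conditions $SS(F)\subset\LbS$ and compact support force $F$ to lie in $\ltr_\bSi$ at all (this is a nontrivial generation statement, not an immediate consequence of constructibility for a hyperplane arrangement); second, that the resulting $\cF$ is \emph{perfect}, i.e.\ has bounded coherent cohomology on $\cX_\bSi$. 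The latter is what the two hypotheses on $F$ are really buying, and it is proved by checking coherence and boundedness affine-locally on each $\cX_\si$, not by assembling a global \v{C}ech description. Your sketch conflates these with a descent problem that is not present.
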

In the above theorem, the dg category $Sh_{cc}(M_\bR,\LbS)$ is the full dg subcategory of $Sh_{cc}(M_\bR)$
on $\cX_\bSi$ whose objects have singular support inside $\LbS$. It is closed under the monoidal product $\star$. The conical Lagrangian ($\bR_{>0}$-invariant Lagrangian in $T^*M_\bR$) is defined directly from the stacky fan $\bSi=(N,\Si,\beta)$. 
$$
\LbS=\bigcup_{\si\in \Si, \chi\in M_\si} \si^\perp_\chi\times (-\si) \subset 
M_\bR\times N_\bR =T^*M_\bR.
$$
By definition $\LbS$ is a conical Lagrangian in $T^*M_\bR$, and is invariant
under $(x,y)\mapsto (x+m,y)$, $m\in M$. 

\begin{remark}
It is particularly easy to describe what the functor $\kappa_\bSi$ does
to $\bQ$-ample equivariant line bundles.
Recall that any $T$-equivariant line bundle on $\cX=\cX_\bSi$ is of the form 
$\cL_\vc=\cO_{\cX}(c_1\cD_1+\cdots c_r\cD_r)$, where
$\cD_i$ denotes the $T$-divisor associated to
the ray $\rho_i\in \Si(1)$, and $c_1,\ldots, c_r\in \bZ$ are integers.
We say $\cL_\vc$ is $\bQ$-ample if there is some positive integer $n$ such
that $\cL_\vc^{\otimes n}$ is the pull back of an ample line bundle
on the coarse moduli space. If $\cL_\vc$ is $\bQ$-ample then 
$$
\tri_\vc=\{ x\in M_\bR\mid \langle x, b_i\rangle \geq -c_i\}
$$
is a convex polytope in $M_\bR$. The interior $\tri_\vc^\circ$
of $\tri_\vc$ is a bounded open set in $M_\bR$. Let
$i:\tri_\vc^\circ\hookrightarrow M_\bR$ be the inclusion. Then
$$
\kappa_{\bSi}(\cL_\vc) = i_! \omega_{\tri^\circ_\vc}.
$$
\end{remark}

We have also proved a coherent-constructible correspondence
for the coarse moduli space $X_\Si$ \cite{more}. Indeed, we prove the following
for any complete (not necessarily simplicial) toric varieties:

\begin{theorem}[coherent-constructible-correspondence for toric varieties]
Let $X_\Si$ be a complete toric variety  defined
by a fan $\Si\subset N_\bR$. Then there is
a quasi-equivalence ot monoidal triangulated dg categories: 
$$
\kappa_\bSi: \Perf_T(X_\Si)\stackrel{\sim}{\lra} \Sh_{cc}(M_\bR,\LS). 
$$
\end{theorem}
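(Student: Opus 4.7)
The plan is to mirror the proof of Theorem \ref{thm:ccc} for toric orbifolds, adapting it to the setting where $\Sigma$ is a general (not necessarily simplicial) fan. First, for each cone $\si \in \Si$, let $U_\si = \Spec \bC[\si^\vee \cap M]$, and define $M_\si := M/(\si^\perp \cap M)$, which classifies $T$-equivariant line bundles on $U_\si$. For each pair $(\si,\chi) \in \Ga(\Si) := \{(\si,\chi) \mid \si \in \Si,\ \chi \in M_\si\}$, define the coherent theta sheaf
$$
\Theta'(\si,\chi) = j_{\si*}\cO_{U_\si}(\chi) \in \cQ_T(X_\Si),
$$
and the constructible theta sheaf $\Theta(\si,\chi) = i_{(\si^\vee_\chi)^\circ!}\omega_{(\si^\vee_\chi)^\circ}$ where $\si^\vee_\chi = \{x \in M_\bR : \langle x,v\rangle \geq \langle \chi,v\rangle \ \forall v \in \si\}$. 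The same partial order on $\Ga(\Si)$, namely $(\si_1,\chi_1) \leq (\si_2,\chi_2)$ iff $(\si_1)^\vee_{\chi_1} \subset (\si_2)^\vee_{\chi_2}$, and the same ``linearized'' dg category $\Ga(\Si)_\bC$ should organize the structure.

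The core technical step is proving the analog of \eqref{eqn:Theta-Theta-prime} for arbitrary cones:
$$
hom_{\cQ_T(X_\Si)}(\Theta'(\si_1,\chi_1),\Theta'(\si_2,\chi_2)) = hom_{Sh_c(M_\bR)}(\Theta(\si_1,\chi_1),\Theta(\si_2,\chi_2)) = hom((\si_1,\chi_1),(\si_2,\chi_2)).
$$
On the coherent side, this reduces to computing $T$-weight spaces of sections of equivariant line bundles on $U_{\si_1} \cap U_{\si_2}$, which are governed by the combinatorics of $\si_1^\vee \cap \si_2^\vee$ and the characters; the computation goes through without the simplicial hypothesis because $U_\si$ is still an affine toric variety. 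On the constructible side, the Hom between costandard sheaves of two open cones is computed by relative cohomology, and reduces to whether one cone region is contained in the other.

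Given this key lemma, one obtains, exactly as in \cite{stack}, a monoidal quasi-equivalence $\kappa_\Si : \ltrp_\Si \to \ltr_\Si$ between the triangulated subcategories generated by the theta sheaves on either side (Theorem \ref{thm:ccc_theta} analog). Next, one identifies $\Perf_T(X_\Si)$ inside $\ltrp_\Si$: any $T$-equivariant perfect complex admits a \v{C}ech resolution with respect to the affine cover $\{U_\si\}_{\si \in \Si}$, and locally on each $U_\si$ a $T$-equivariant vector bundle decomposes into line bundles $\cO_{U_\si}(\chi)$; pushing forward yields a resolution by $\Theta'(\si,\chi)$. Dually, one characterizes $Sh_{cc}(M_\bR,\LS)$ inside $\ltr_\Si$ by showing that a compactly supported constructible sheaf with singular support in $\LS$ can be built from $\Theta(\si,\chi)$'s by decomposing along the stratification cut out by hyperplanes $\si^\perp_\chi$.

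The hardest part will be matching the two subcategories precisely under $\kappa_\Si$, i.e. verifying that $\kappa_\Si(\Perf_T(X_\Si)) = Sh_{cc}(M_\bR,\LS)$ in the non-simplicial regime. Non-simplicial cones contribute singular points to $X_\Si$, so $\Perf_T(X_\Si) \subsetneq \Coh_T(X_\Si)$ in general; the perfectness condition must be translated into a combinatorial/microlocal constraint on the constructible side. The expectation is that both sides are characterized by the theta-sheaf resolution property and by the requirement that the singular support lies in $\LS$; checking that no extra objects creep in on either side (generation and essential surjectivity) and that the monoidal structure matches on generators (via the standard Minkowski-sum/convolution identity $\Theta(\si,\chi_1) \star \Theta(\si,\chi_2) = \Theta(\si,\chi_1+\chi_2)$) then completes the equivalence.
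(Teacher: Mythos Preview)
This theorem is not proved in the present paper. The authors state it for context and cite \cite{more} (``A categorification of Morelli's theorem'') for the proof; the sentence immediately preceding the statement reads ``We have also proved a coherent-constructible correspondence for the coarse moduli space $X_\Si$ \cite{more}.'' So there is no in-paper argument to compare your proposal against.

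That said, your outline is the right shape and is essentially the strategy of \cite{more}: build theta sheaves $\Theta'(\si,\chi)$ and $\Theta(\si,\chi)$ indexed by $\Ga(\Si)$, verify the hom-matching \eqref{eqn:Theta-Theta-prime} at the level of the poset category, obtain $\kappa_\Si$ on the triangulated envelopes, and then identify $\Perf_T(X_\Si)$ and $\Sh_{cc}(M_\bR,\LS)$ as the corresponding full subcategories. You are also right that the delicate point in the non-simplicial case is the last identification, since $\Perf_T(X_\Si)\subsetneq \Coh_T(X_\Si)$ when $X_\Si$ is singular; this is precisely where the argument in \cite{more} requires work beyond what is sketched here, and your proposal does not supply that step. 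If you want a self-contained proof you will need to consult \cite{more} for the characterization of the image of $\kappa_\Si$ restricted to perfect complexes.
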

The category $Sh_{cc}(M_\bR;\LS)$ is similarly defined as the subcategory of 
$Sh_{cc}(M_\bR)$ whose objects have singular support in
a conical Lagrangian
$$
\LS:= \bigcup_{\si\in \Si,\chi\in M}
(\si^\perp+\chi) \times (-\si) \subset M_\bR\times N_\bR= T^*M_\bR,
$$
where $\si^\perp =\{ x\in M_\bR\mid \langle x,v\rangle =0\textup{ for all } v\in \si\}$.

The theorem above can be considered as a ``categorification" of Morelli's theorem \cite{Mo}.
Let $L_M(M_\bR)$ be the group of functions generated over $\bZ$ by the indicator functions
$1_P$ of convex lattice polyhedra $P$, and let $S_M(M_\bR)$ be the 
abelian group generated by rational convex cones in $M_\bR$. 
Then $S_M(M_\bR)$ is the  group of germs of functions in $L_M(M_\bR)$ at the origin
(or any point in the lattice $M$). Let $S_\Sigma(M_\bR)$ be the subgroup
of $S_M(M_\bR)$ generated by $\{\si^\vee\mid \si\in \Si\}$.

\begin{theorem}[Morelli]
Let $X_\Si$ be a smooth projective toric variety. Then
there is a group isomorphism $K_T(X_\Si)\stackrel{\sim}{\lra}S_\Si(M_\bR)$,
$\cL_\vc\mapsto 1_{\tri_\vc}$, $\cL_\vc$ ample.
\end{theorem}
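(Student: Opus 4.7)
The plan is to derive Morelli's theorem as a decategorification of the CCC for toric varieties that was just stated. Applying the Grothendieck group construction $K_0(-)$ to both sides of the quasi-equivalence $\kappa_\Si:\Perf_T(X_\Si)\stackrel{\sim}{\lra}\Sh_{cc}(M_\bR,\LS)$ yields a group isomorphism
\[
K_T(X_\Si)=K_0(\Perf_T(X_\Si))\cong K_0(\Sh_{cc}(M_\bR,\LS)).
\]
So it suffices to construct a natural isomorphism $\chi:K_0(\Sh_{cc}(M_\bR,\LS))\stackrel{\sim}{\lra} S_\Si(M_\bR)$ and then check that the composite $\chi\circ\kappa_\Si$ agrees with the prescribed rule $\cL_\vc\mapsto 1_{\tri_\vc}$ on ample equivariant line bundles.

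The map $\chi$ I would use is the stalkwise Euler characteristic: for $F\in \Sh_{cc}(M_\bR,\LS)$, assign the constructible function $x\mapsto \chi(F_x)$. This is additive on distinguished triangles and hence factors through $K_0$. The singular-support condition $SS(F)\subset \LS$ forces the resulting function to be locally constant on strata dual to cones of $\Si$, so its germ at the origin (and, after translation, at any lattice point of $M$) lies in the subgroup $S_\Si(M_\bR)\subset S_M(M_\bR)$ generated by the $\{1_{\si^\vee}\}$. The essential combinatorial input here is that every stratum of the Lagrangian $\LS=\bigcup_{\si,\chi}(\si^\perp+\chi)\times(-\si)$ has conormal direction belonging to some $-\si$ for $\si\in\Si$.

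Next I would verify the computation on ample generators. By the remark following Theorem \ref{thm:ccc}, an ample equivariant line bundle $\cL_\vc$ is sent by $\kappa_\Si$ to the costandard sheaf $i_!\omega_{\tri_\vc^\circ}$. Its stalk at $p\in \tri_\vc^\circ$ is $\bC[n]$ and is zero elsewhere, so $\chi(i_!\omega_{\tri_\vc^\circ})=(-1)^n 1_{\tri_\vc^\circ}$. Modulo the boundary contributions — which are themselves $\bZ$-linear combinations of indicator functions of lower-dimensional faces, and so are absorbed by inclusion-exclusion within the polytope algebra — this agrees with $1_{\tri_\vc}$ up to the sign $(-1)^n$ inherent to the shift in $\omega$. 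Choosing the normalization of $\chi$ accordingly (either by a global $(-1)^n$ twist or by replacing $\omega$ with the constant sheaf via a degree shift) yields the map $\cL_\vc\mapsto 1_{\tri_\vc}$ on the nose.

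For bijectivity, I would use that on a smooth projective toric variety $K_T(X_\Si)$ is generated by $T$-equivariant line bundles, and that every such line bundle is a $\bZ$-linear difference of ample ones, so that the rule $\cL_\vc\mapsto 1_{\tri_\vc}$ indeed determines a homomorphism on the full group. Surjectivity of $\chi$ onto $S_\Si(M_\bR)$ is immediate because each generator $1_{\si^\vee}$ (or $1_{\si^\vee+\chi}$) is realized as the stalkwise Euler characteristic of a theta sheaf $\Theta(\si,\chi)$, and these already generate $\ltr_\bSi$. Injectivity then follows from the fact that CCC is an equivalence at the categorical level, together with the non-degenerate pairing between theta sheaves. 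The main obstacle in executing this plan is bookkeeping: one must carefully align the degree shift in $\omega=\fofr[\dim]$ with the unsigned indicator convention used in $L_M(M_\bR)$, and must show that the auxiliary quotient by ``boundary'' contributions to pass from $(-1)^n 1_{\tri_\vc^\circ}$ to $\pm 1_{\tri_\vc}$ is consistent across all generators, i.e.\ that $\chi$ really lands in $S_\Si$ rather than in the a priori larger group $S_M$.
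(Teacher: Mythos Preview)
The paper does not prove this statement: Morelli's theorem is quoted from \cite{Mo} as a known result, and the coherent--constructible correspondence is presented as a \emph{categorification} of it, not as a route to reproving it. So there is no proof in the paper to compare your proposal against.

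Your decategorification strategy is natural, but as written it has a real gap. The group $S_\Si(M_\bR)$ is defined as a group of \emph{germs} at the origin of functions in $L_M(M_\bR)$, whereas the stalkwise Euler characteristic of an object of $\Sh_{cc}(M_\bR,\LS)$ is a globally defined constructible function. Passing from the global function to its germ at $0$ discards information, and your injectivity argument (``CCC is an equivalence at the categorical level, together with the non-degenerate pairing between theta sheaves'') does not explain why nothing is lost in that passage. Establishing this is essentially the substantive part of Morelli's own proof: one needs to know independently that $K_T(X_\Si)$ has the correct rank, or that the localization/restriction maps to fixed points already separate classes, before the germ map can be an isomorphism. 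Likewise, the passage from $(-1)^n 1_{\tri_\vc^\circ}$ to $1_{\tri_\vc}$ is not merely a sign normalization: the boundary faces contribute nontrivial elements of $L_M(M_\bR)$, and you would need to show explicitly that the Euler--characteristic identity $\sum_{F\leq \tri_\vc}(-1)^{\dim F}1_{F^\circ}=(-1)^n 1_{\tri_\vc}$ (or its dual) holds and is compatible with your map, rather than asserting that these terms are ``absorbed by inclusion--exclusion.''
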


Bondal has also proved a similar relation between (non-equivariant) coherent sheaves and constructible sheaves \cite{Bo}, characterized by a stratification.
\begin{theorem}[Bondal]
Let $X_\Si$ be a smooth projective variety defined by
a fan $\Si$,  with some additional
assumption on $\Si$.
$$
D^b Coh(X_\Si)\cong D\Sh_c(M_\bR/M, \cS)
$$
where $\cS$ is a stratification on the compact
torus $M_\bR/M\cong (S^1)^n$ determined
by $\Si$.
\end{theorem}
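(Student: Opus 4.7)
The plan is to deduce Bondal's theorem by descending the equivariant toric coherent-constructible correspondence just stated along the covering map $q:M_\bR\to M_\bR/M\cong(S^1)^n$. The crucial observation is that the conical Lagrangian
\[
\LS=\bigcup_{\si\in\Si,\,\chi\in M}(\si^\perp+\chi)\times(-\si)\subset T^*M_\bR
\]
is manifestly invariant under the $M$-translation action on $M_\bR$, since shifting $\chi$ by $m\in M$ sends $\si^\perp+\chi$ to another member of the same family. Hence $\LS$ descends to a conical Lagrangian $\Lambda_\cS\subset T^*(M_\bR/M)$, whose strata $\cS$ are the connected components of the complement of the image on the torus of the affine hyperplane arrangement $\{\si^\perp+\chi\}_{\si,\chi}$. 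Since $\cS$ is a finite decomposition by smooth strata and constructibility of a complex with respect to $\cS$ is equivalent to its singular support lying in $\Lambda_\cS$, the category $\Sh_c(M_\bR/M,\cS)$ is the correct target on the constructible side.

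On the coherent side I would pass from $\Perf_T(X_\Si)$ to $D^b\Coh(X_\Si)=D(\Perf(X_\Si))$, the latter equality using smoothness of $X_\Si$, by de-equivariantization. A $T$-equivariant perfect complex on $X_\Si$ decomposes into isotypic components indexed by $\chi\in M$, and forgetting the equivariance is governed by the augmentation $\bC[M]\to\bC$ of the character ring. Under $\kappa_\Si$, the $M$-action on $\Perf_T(X_\Si)$ by twisting with $\cO_{X_\Si}(\chi)$ corresponds to translation on $M_\bR$, as one reads off from the formula $\kappa_\Si(\Theta'(\si,\chi))=\Theta(\si,\chi)$ with $\Theta(\si,\chi)$ supported on $(\si_\chi^\vee)^\circ$. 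Therefore the de-equivariantization on the coherent side is matched under $\kappa_\Si$ with the $M$-descent on the constructible side, yielding the desired equivalence
\[
D^b\Coh(X_\Si)\stackrel{\sim}{\lra} D\Sh_c(M_\bR/M,\cS).
\]

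The hard part is making this descent rigorous. Compactly supported sheaves on $M_\bR$ admit no nonzero $M$-invariant extension, so one cannot literally take $q_*$; instead $\Sh_c(M_\bR/M,\cS)$ must be identified with the category of $M$-equivariant objects in a non-compactly-supported enlargement of $\Sh_c(M_\bR,\LS)$, with the equivariance data replacing the compactness hypothesis. Symmetrically, $\Perf(X_\Si)$ must be realized as the $M$-coinvariants, in a suitable dg sense, of $\Perf_T(X_\Si)$, and the two quotient operations must be shown to intertwine under $\kappa_\Si$ at the dg level. The ``additional assumption on $\Si$'' referenced in the statement presumably ensures that this descent is clean---for instance, that the images of the linear subspaces $\si^\perp$ on the torus meet transversely so that $\cS$ is a regular cell decomposition, or equivalently that $X_\Si$ admits a full strong exceptional collection of line bundles indexed by a fundamental domain for the $M$-action on the theta-indexing set $\{(\si,\chi)\}$. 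Once this is in place, the hom computation of Theorem~\ref{thm:ccc_theta} together with generation by descended theta sheaves closes the argument.
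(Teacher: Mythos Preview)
The paper does not prove this statement. Bondal's theorem is quoted as a result from the literature \cite{Bo}, placed alongside Morelli's theorem and the authors' own equivariant CCC for context; there is no proof given or even sketched in the paper. So there is nothing to compare your proposal against.

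That said, your descent strategy is the natural one and is essentially the program of \cite{Tr}, which the paper also cites. A few comments on the content of your sketch. The $M$-invariance of $\LS$ is correct and is stated explicitly in the paper just after the definition of $\LS$. Your identification of twisting by $\cO(\chi)$ on the coherent side with translation by $\chi$ on the constructible side is also correct and follows from the monoidality of $\kappa_\Si$ together with $\kappa_\Si(\cO(\chi))$ being the costandard sheaf on the point $\chi$. Where you are honest about the gap---making the descent rigorous---is exactly where the work lies: one must identify $\Perf(X_\Si)$ with a suitable dg quotient (or module category over $\bC[M]$) of $\Perf_T(X_\Si)$, and match this with the passage from $\Sh_{cc}(M_\bR;\LS)$ to $\Sh_c(M_\bR/M;\cS)$. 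Your remark that compactly supported sheaves on $M_\bR$ have no $M$-equivariant objects is well taken; the right formulation replaces compact support by a finiteness condition modulo $M$, or equivalently works with $M$-equivariant objects in $\Sh_c(M_\bR;\LS)$ without the compact-support restriction. Your guess about the ``additional assumption on $\Si$'' is in the right direction: Bondal's hypothesis is roughly that the stratification $\cS$ is fine enough that the standard sheaves on its strata generate, which is related to (but not identical with) the existence of a full strong exceptional collection of line bundles.
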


\begin{example}[Example \ref{ex:stack2} continued]
Let the stacky fan $\bSi$ be as in Example \ref{ex:stack2}, which defines the toric orbifold $\bP(1,3)$. 
The conical Lagrangians $\LS$ and $\LbS$ are shown in Figure \ref{fig:LS}.
\begin{figure}[h]
\psfrag{LS}{$\LS$}
\psfrag{LbS}{$\LbS$}
\psfrag{-1}{\Tiny $-1$}
\psfrag{0}{\Tiny $0$}
\psfrag{1}{\Tiny $1$}
\psfrag{-1/3}{\Tiny $-1/3$}
\psfrag{-2/3}{\Tiny $-2/3$}
\psfrag{1/3}{\Tiny $1/3$}
\psfrag{2/3}{\Tiny $2/3$}
\includegraphics[scale=0.3]{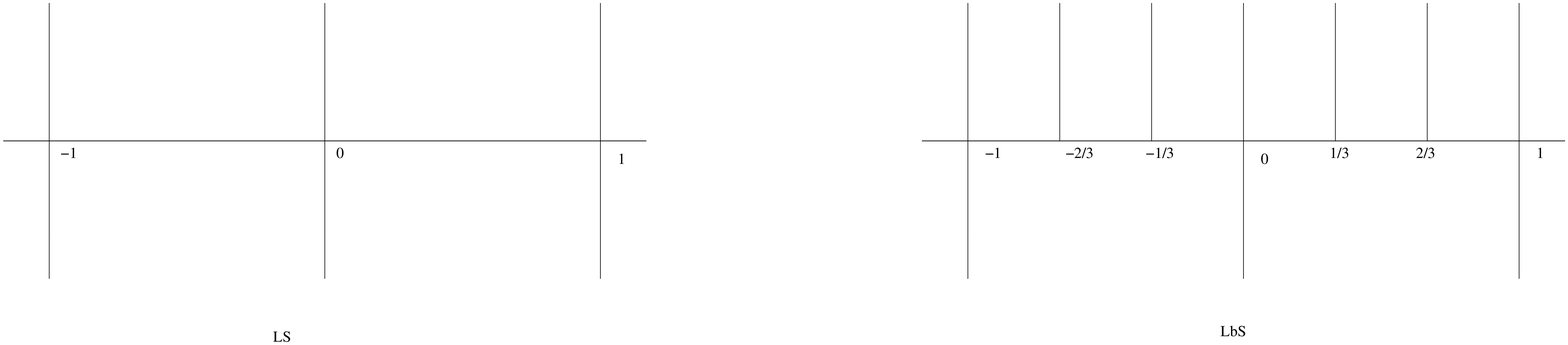}
\caption{The conical Lagrangians $\LS$ and $\LbS$ for $\bSi$ defined in Example \ref{ex:fan2}. The horizontal direction is $M_\bR$ and the vertical direction is $N_\bR$.}
\label{fig:LS}
\end{figure}
\end{example}

\begin{example}[Example \ref{ex:stack1} continued]
\begin{enumerate}
\item $T$-equivariant ample line bundle on $\bP^1$:
$\cO(c_1 D_1+ c_2 D_2)$, $c_1, c_2\in \bZ$, $c_1+c_2>0$.
$$
\tri_{c_1,c_2}^\circ =\{ x\in \bR\mid x > -c_1, -x > -c_2\} =(-c_1, c_2)
$$

\item $T$-equivariant $\bQ$-ample line bundles on $\bP(1,3)$: $\cO(c_1\cD_1 + c_2\cD_2)$, $c_1, c_2\in \bZ$, $\frac{c_1}{3} + c_2 >0$. Let $p:\bP(1,3)\to \bP^1$ be the projection to the coarse moduli space. Then $p^*D_1 = 3\cD_1$, $p^*D_2=\cD_2$.
$$
\tri_{c_1,c_2}^\circ=\{ x\in \bR \mid 3x>-c_1, -x>c_2\}=(-\frac{c_1}{3}, c_2).
$$ 
\end{enumerate}
\end{example}

\section{Fourier-Mukai Transformation: A Constructible Perspective}
\label{sec:FM}

In this section, $X_1$ and $X_2$ are always simplicial toric varieties
defined by simplicial fans $\Si_1$ and $\Si_2$ in $N_\bR$, respectively; $B$ and $C$ are effective toric 
$\bQ$-divisors on $X_1$ and $X_2$, respectively, such that
$(X_1,B)$ and $(X_2,C)$ are toric log pairs as in Section \ref{sec:log-pair}. 
Let $\cX_1$ and $\cX_2$ be the toric orbifolds associated to the
pairs $(X_1,B)$ and $(X_2,C)$, respectively.  
Assume there are proper birational morphisms $\mu_1: W\to X_1$ and $\mu_2: W\to X_2$ for some variety $W$ 
such that $\mu_1^*(K_{X_1}+B)\ge \mu_2*(K_{X_2}+C)$. Kawamata conjectures that there exists a full and faithful functor of triangulated 
categories
$$
F'_{12}=\mu_{1*}\circ  \mu_2^*: D^b\Coh(\cX_2)\to D^b\Coh(\cX_1)
$$
in \cite{Ka}, where $\mu_i:\cW\to \cX_i$ are the morphisms for the corresponding
stacks, by an abuse of notation. If the inequality above becomes an equality, by invoking this conjecture in both directions, $F'_{12}$ is then an equivalence of triangulated categories. Passing Kawamata's argument to the language of constructible sheaves via Theorem \ref{thm:ccc}, 
this Fourier-Mukai fully faithful functor arises from intuitive combinatorial argument. 
This section elaborates Kawamata's theorem from the perspective of constructible sheaves, proving some cases discussed in \cite{Ka}, in the equivariant and dg setting.\footnote{Although not explicitly stated, Kawamata's proof is essentially equivariant in \cite{Ka}.}

We introduce some notation:
\begin{enumerate} 
\item The Fourier Mukai functors are
$F'_{12}=\mu_{1*}\circ \mu_2^*$ and
$F'_{21}=\mu_{2*}\circ \mu_1^*$.
\item Let $D_{1,i}$ (resp. $\cD_{1,i}$) denote the $T$-divisor on 
$X_1$ (resp. $\cX_1$) associated to the 1-dimensional cone
$\rho_i\in \Si_1(1)$.

\item Let $D_{2,i}$ (resp. $\cD_{2,i}$) denote the $T$-divisor on 
$X_2$ (resp. $\cX_2$) associated to the 1-dimensional cone
$\rho_i\in \Si_2(1)$.

\item Let $D'_i$ (resp. $\cD'_i$) denote the $T$-divisor
on $W$ (resp. $\cW$) associated to the 1-dimensional cone
$\rho_i\in \Si'(1)$.

\item Let $p_i:\cX_i\to X_i$ be canonical map to the coarse moduli. 
\end{enumerate}

We have
$$
B=\sum_{i=1}^{l_1} (1-\frac{1}{r_i}) D_{1,i},\quad C=\sum_{i=1}^{l_2} (1-\frac{1}{s_i}) D_{2,i},
$$
where $r_i$, $s_i$ are positive integers. Then
$$
p_1^*D_{1,i}=r_i \cD_{1,i},\quad p_2^*D_{2,i}=s_i \cD_{2,i}.
$$
Note that from the construction of $\cX_i$, 
$$
p_1^*(K_{X_1}+B)=K_{\cX_1},\quad p_2^*(K_{X_2}+B)=K_{\cX_2}.
$$

\subsection{Toric orbifolds with the same coarse moduli space}\label{sec:caseI}

In the first case of \cite[Theorem 4.2]{Ka}, Kawamata shows that if $X_1=X_2=X$ and $K_{X_1}+B\ge K_{X_2}+C$, 
then the Fourier-Mukai functor
$$
F'_{12}=\mu_{1*}\circ \mu_2^*: \Coh(\cX_2)\to \Coh(\cX_1)
$$
is fully faithful.


Recall from Section \ref{sec:definition} that $N=\bZ^n$, and $\Si$ is a simplicial fan in $N_\bR$. The $1$-cones $\Si(1)$ consists of rays $\rho_1,\dots,\rho_l$, and the generating set of $\Si(1)\cap N$ is $\{v_1,\dots, v_r\}$. Let $\beta_1$ and $\beta_2$ to be maps (where $v_i$ are regarded as column vectors below)
\begin{eqnarray*}
&& \beta_1=\left[\begin{array}{ccc} r_1 v_1 & \cdots & r_l v_l \end{array}\right]:
\bZ^l\lra N=\bZ^n,\\
&& \beta_2=\left[\begin{array}{ccc} s_1 v_1 & \cdots & s_l v_l \end{array}\right]:
\bZ^l\lra N=\bZ^n.
\end{eqnarray*}
>From the stacky fans $\bSi_i=(N,\Si,\beta_i)$ one defines two toric DM stacks 
$\cX_1=\cX_{\bSi_1}$ and $\cX_2=\cX_{\bSi_2}$. They have the same coarse moduli space 
$X=X_\Si$ given by the fan $\Si$ as a toric variety.

Let $\cW=\cX_1\times_X\cX_2$. It is the toric orbifold defined by the stacky fan $\bSi'=(N,\Si,\beta')$, where
$$
\beta'=\left[\begin{array}{ccc} t_1 v_1 & \cdots & t_l v_l \end{array}\right]:
\bZ^l\lra N=\bZ^n,\quad t_i=l.c.m.(r_i,s_i).
$$
We have the following diagram
$$
\xymatrix{
& \cW \ar[dl]_{\mu_1}  \ar[dr]^{\mu_2} &  \\
\cX_1 \ar[dr]^{p_1} &  &\cX_2 \ar[dl]_{p_2} \\
& X  &}
$$
where $p_i$ is the morphism from $\cX_i$ to their common coarse moduli space $X$.
Given a 1-dimensional cone $\rho_i\in \Si(1)$ let $D_i$, $\cD_{1,i}$, $\cD_{2,i}$, and $\cD'_i$ denote the
associated $T$-divisors on $X$, $\cX_1$, $\cX_2$, and $\cW$, respectively. 
Let $m_i=\frac{t_i}{r_i} \in \bZ$ and $n_i=\frac{t_i}{s_i}\in \bZ$.

$$
p_1^*D_i = r_i \cD_{1,i},\quad p_2^*D_i = s_i \cD_{2,i}, \quad
\mu_1^*\cD_{1,i}= m_i \cD'_i,\quad \mu_2^* \cD_{2,i}= n_i \cD'_i.
$$ 
\begin{eqnarray*}
&& K_X = - \sum_{i=1}^l D_i,\quad B = \sum_{i=1}^l(1-\frac{1}{r_i}) D_i,\quad
C=\sum_{i=1}^l(1-\frac{1}{s_i}) D_i\\
&& p_1^*(K_X+B)=-\sum_{i=1}^l \cD_{1,i}= K_{\cX_1},\quad
p_2^*(K_X+C)= -\sum_{i=1}^l \cD_{2,i}= K_{\cX_2}\\
&& \mu_1^*K_{\cX_1} = -\sum_{i=1}^l m_i \cD_i',\quad
\mu_2^*K_{\cX_2} = -\sum_{i=1}^l n_i \cD_i'.
\end{eqnarray*}

$ $From the above calculations, we observe that: 
\begin{lemma} \label{lem:K-I}
$$
\mu_1^*K_{\cX_1}\geq \mu_2^* K_{\cX_2} \Leftrightarrow  r_i\geq s_i, \  i=1,\ldots,l
\Leftrightarrow  K_X+ B\geq K_X +C.
$$
\end{lemma}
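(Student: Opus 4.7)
The plan is to read the lemma off directly from the coefficient expressions for $\mu_1^*K_{\cX_1}$, $\mu_2^*K_{\cX_2}$, $K_X+B$, and $K_X+C$ that were computed in the preceding paragraph. Both equivalences are coefficient-wise comparisons of $\bQ$-Weil divisors supported on sets of distinct prime $T$-divisors, so no linear-equivalence subtlety arises.

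For the first equivalence, I would use the formulas
$$
\mu_1^*K_{\cX_1} = -\sum_{i=1}^l m_i \cD_i',\qquad \mu_2^*K_{\cX_2} = -\sum_{i=1}^l n_i \cD_i',
$$
together with the fact that $\cD_1',\ldots,\cD_l'$ are distinct prime divisors on $\cW$. Then $\mu_1^*K_{\cX_1}\geq \mu_2^*K_{\cX_2}$ is equivalent to $-m_i\geq -n_i$ for each $i$, i.e.\ $m_i\leq n_i$. Substituting $m_i=t_i/r_i$ and $n_i=t_i/s_i$ with $t_i=\mathrm{lcm}(r_i,s_i)>0$, this becomes $t_i/r_i\leq t_i/s_i$, which is equivalent to $s_i\leq r_i$ for each $i$.

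For the second equivalence, I would use
$$
K_X+B = -\sum_{i=1}^l \frac{1}{r_i}D_i,\qquad K_X+C = -\sum_{i=1}^l \frac{1}{s_i}D_i,
$$
obtained by combining $K_X=-\sum D_i$ with the expressions for $B$ and $C$. Since $D_1,\ldots,D_l$ are distinct prime $T$-divisors, $K_X+B\geq K_X+C$ holds iff $-1/r_i\geq -1/s_i$ for each $i$, i.e.\ $1/r_i\leq 1/s_i$, i.e.\ $r_i\geq s_i$.

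There is no real obstacle; the only point to be careful about is the direction of the inequalities (a larger coefficient $r_i$ produces a smaller contribution $1/r_i$ on the coarse-moduli side and a smaller contribution $m_i=t_i/r_i$ on the $\cW$ side, and both come with a minus sign from $K$), but once the formulas above are written out both equivalences follow immediately.
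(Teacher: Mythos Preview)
Your proposal is correct and follows exactly the paper's approach: the paper presents this lemma as an immediate observation from the displayed formulas for $\mu_1^*K_{\cX_1}$, $\mu_2^*K_{\cX_2}$, $B$, and $C$, and your coefficient-wise comparison (using $m_i=t_i/r_i$, $n_i=t_i/s_i$) is precisely that observation made explicit.
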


For any $\si\in \Si(d)$, let $\{ v_{i_1},\ldots, v_{i_d}\}= \si\cap \{ v_1,\ldots, v_l\}$.
There are an injective group homomorphisms
\begin{eqnarray*}
&& \mu_{1,\si}:N'_\si=\bigoplus_{k=1}^d \bZ(t_{i_k} v_{i_k})\lra  
      N_{1,\si}=\bigoplus_{k=1}^d \bZ(r_{i_k} v_{i_k})\\
&& \mu_{2,\si}:N'_\si=\bigoplus_{k=1}^d \bZ(t_{i_k} v_{i_k})\lra  
      N_{2,\si}=\bigoplus_{k=1}^d \bZ(s_{i_k} v_{i_k})
\end{eqnarray*}
and surjective group homomorphisms 
$$
\mu_{i,\si}^*: M_{i,\si}:= \Hom(N_{i,\si},\bZ)\to  M'_\si:=\Hom(N'_\si,\bZ),\quad i=1,2.
$$ 

We now introduce some notation. Given $\si\in \Si$, let
$\langle \ , \ \rangle'_\si : M_\si'\times N'_\si\to\bZ$ and
$\langle \ , \ \rangle_{i,\si}: M_{i,\si}\times N_{i,\si}\to \bZ$,
$i=1,2$, be the natural pairing. Given $x\in \bR$, define
$\lceil x \rceil \in \bZ$ by $\lceil x \rceil -1< x\leq \lceil x\rceil$.
We define surjective maps (which is not a group homomorphism) 
$\mu_{i,\si*}: M'_{\si}\to M_{i,\si}$, $i=1,2$, by 
\begin{eqnarray*}
&& \langle \mu_{1,\si*}(\chi),(r_{i_k} v_{i_k}) \rangle_{1,\si} = 
\lceil \frac{1}{m_{i_k}}\langle \chi,(t_{i_k}v_{i_k})\rangle'_\si\rceil \in \bZ \\
&& \langle \mu_{2,\si*}(\chi),(s_{i_k} v_{i_k}) \rangle_{2,\si} = 
\lceil \frac{1}{n_{i_k}}\langle \chi,(t_{i_k}v_{i_k})\rangle'_\si\rceil \in \bZ \\ 
\end{eqnarray*}
where
$$
\chi\in M'_\si,\quad
\frac{1}{m_{i_k}}\langle \chi_i, (t_{i_k} v_{i_k})\rangle_\si \in  \frac{1}{m_{i_k}}\bZ,
\quad  \frac{1}{n_{i_k}}\langle \chi_i, (t_{i_k} v_{i_k})\rangle_\si 
\in  \frac{1}{n_{i_k}}\bZ,
$$

For $i=1,2$, define (with an abuse of notation)
\begin{eqnarray*}
&& \mu_i^*:\Gamma(\bSi_i)\to \Gamma(\bSi'),\quad (\si,\chi)\mapsto (\si,\mu_{i,\si}^*\chi)\\
&& \mu_{i*}:\Gamma(\bSi')\to \Gamma(\bSi_i),\quad (\si,\chi)\mapsto (\si,\mu_{i,\si *} \chi)
\end{eqnarray*}

Let $\Theta_1(\si,\chi)$ (resp. $\Theta_\cW(\si,\chi)$, $\Theta_2(\si,\chi)$) be the constructible theta sheaves on $M_{1,\bR}$ (resp. $M'_\bR$, $M_{2,\bR}$) for $\si\in \Si_1$ (resp. $\Si'$, $\Si_2$) and $\chi \in M_{1,\si}$ (resp. $M'_\si$, $M_{2,\si}$). Similarly, let $\Theta'_1(\si,\chi)$ (resp. $\Theta'_\cW(\si,\chi)$, $\Theta'_2(\si,\chi)$) be the quasi-coherent theta sheaves on $\cX_1$ (resp. $\cX_\cW$, $\cX_2$) for $\si\in \Si_1$ (resp. $\Si'$, $\Si_2$) and $\chi \in M_{1,\si}$ (resp. $M'_\si$, $M_{2,\si}$).
\begin{proposition} \label{pro:pullback-pushforward-I}
For $i=1,2$, let $\mu_i^*:\cQ_T(\cX_i)\to \cQ_T(\cW)$ and $\mu_{i*}:\cQ_T(\cW)\to \cQ_T(\cX_i)$
be the pullback and pushforward functors of equivariant quasicoherent sheaves. Then
\begin{eqnarray*}
&& \mu_i^*\Theta_i'(\si,\chi) =\Theta_\cW'(\mu_i^*(\si,\chi)),\quad (\si,\chi)\in \Ga(\bSi_i),\\
&& \mu_{i*}\Theta_\cW'(\si,\chi) =\Theta_i'(\mu_{i*}(\si,\chi)),\quad (\si,\chi)\in \Ga(\bSi').
\end{eqnarray*}
\end{proposition}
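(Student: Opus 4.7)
The plan is to reduce both identities to local statements on the affine pieces $\cW_\si\to \cX_{i,\si}$. For each cone $\si\in\Si$ the square
\begin{equation*}
\begin{CD}
\cW_\si @>{j_\si^\cW}>> \cW \\
@V{\mu_i|_{\cW_\si}}VV @VV{\mu_i}V \\
\cX_{i,\si} @>{j_\si^{\cX_i}}>> \cX_i
\end{CD}
\end{equation*}
is 2-cartesian --- all three stacks are built from the same underlying $U_\Si$, and the cover by $U_\si$'s is $\mu_i$-compatible --- and the horizontal arrows are open immersions, hence flat. Flat base change gives $\mu_i^*(j_\si^{\cX_i})_*\cong (j_\si^\cW)_*(\mu_i|_{\cW_\si})^*$, and composition of pushforwards gives $\mu_{i*}(j_\si^\cW)_*\cong (j_\si^{\cX_i})_*(\mu_i|_{\cW_\si})_*$. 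Since $\Theta'(\si,\chi)=j_{\si*}\cO_{\cX_\si}(\chi)$, the proposition reduces to the two local identities
\begin{equation*}
(\mu_i|_{\cW_\si})^*\cO_{\cX_{i,\si}}(\chi)=\cO_{\cW_\si}(\mu_{i,\si}^*\chi), \qquad (\mu_i|_{\cW_\si})_*\cO_{\cW_\si}(\chi)=\cO_{\cX_{i,\si}}(\mu_{i,\si*}\chi).
\end{equation*}

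For the pullback identity, the plan is to invoke the standard classification of $T$-equivariant line bundles on an affine toric orbifold, indexed by $M_\si=\Hom(N_\si,\bZ)$, together with the fact that a toric morphism coming from a morphism of stacky fans acts on characters by the dual homomorphism. Here $\mu_i|_{\cW_\si}$ is induced by the diagonal scaling $\mathrm{diag}(m_{i_k}):\tN'_\si\to\tN_{i,\si}$ (respectively $\mathrm{diag}(n_{i_k})$ for $i=2$), whose dual is precisely $\mu_{i,\si}^*$; this identifies the pullback line bundle on the nose.

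The pushforward identity is the substantive part. The plan is to realize $\mu_i|_{\cW_\si}:[U_\si/G_{\bSi'}]\to[U_\si/G_{\bSi_i}]$ as the map induced by the surjective group homomorphism $G_{\bSi'}\twoheadrightarrow G_{\bSi_i}$ obtained by restricting $\mathrm{diag}(m_{i_k}):\tT\to\tT$ to kernels; the kernel is finite abelian, so the pushforward is exact (no higher direct images in characteristic zero) and amounts to taking invariants under this finite kernel. To match sheaves, one compares $T$-weight spaces of global sections directly. A weight $\lambda\in M$ contributes to $\Gamma(\cO_{\cX_{i,\si}}(\mu_{i,\si*}\chi))$ iff $\langle\lambda,r_{i_k}v_{i_k}\rangle_{i,\si}\geq\langle\mu_{i,\si*}\chi,r_{i_k}v_{i_k}\rangle_{i,\si}$ for all $k$, while it contributes to $\Gamma(\cO_{\cW_\si}(\chi))$ iff $\langle\lambda,t_{i_k}v_{i_k}\rangle'_\si\geq\langle\chi,t_{i_k}v_{i_k}\rangle'_\si$. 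Using $t_{i_k}=m_{i_k}r_{i_k}$ and the integrality of $\langle\lambda,r_{i_k}v_{i_k}\rangle$, the second inequality is equivalent to $\langle\lambda,r_{i_k}v_{i_k}\rangle\geq\lceil\frac{1}{m_{i_k}}\langle\chi,t_{i_k}v_{i_k}\rangle'_\si\rceil$, which is the first inequality by the very definition of $\mu_{i,\si*}$. The principal obstacle is precisely this ceiling step --- converting a fractional inequality in $M'_\si$ to an integer inequality in $M_{i,\si}$ is what forces the ceiling function, and reconciling it with the definition of $\mu_{i,\si*}$ is what pins down the local formula and, in particular, forces the output of pushforward to again be a line bundle rather than a more exotic sheaf.
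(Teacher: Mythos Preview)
Your proof is correct and follows essentially the same approach as the paper's. For the pushforward, both you and the paper compute the module of global sections as $\bC[\si^\vee_\chi\cap M'_\si]$, observe that pushforward along $\mu_i$ selects the characters lying in the sublattice $M_{i,\si}\subset M'_\si$, and then identify $\si^\vee_\chi\cap M_{i,\si}=\si^\vee_{\mu_{i,\si*}\chi}\cap M_{i,\si}$; the paper simply asserts this last equality, whereas you spell out the ceiling argument that proves it. For the pullback, the paper just cites the functoriality theorem from \cite{stack}, while you supply a self-contained local argument via flat base change and the classification of equivariant line bundles; your flat base-change reduction to the affine charts is also what is implicit in the paper's pushforward computation. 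One small imprecision: when you compare weight spaces you write ``a weight $\lambda\in M$'', but the characters indexing the sections lie in $M_{i,\si}$ (resp.\ $M'_\si$), which need not coincide with $M$ for non-maximal $\si$; this does not affect the argument once the pairing is interpreted correctly.
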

\begin{proof}
It suffices to consider the case $i=1$.
The first statement follows directly from the functoriality property of CCC 
\cite[Theorem 5.16]{stack}.
For the second statement, the theta sheaf $\Theta_\cW'(\si,\chi)$ is given by the module 
$\bC[\si^\vee_\chi \cap M'_\si]$. 
The sections of the push-forward are the sections of $\Theta_\cW'(\si,\chi)$ 
whose characters are in $M_{1,\si}$. Thus $\mu_{1*}\Theta_\cW'(\si,\chi)$ is given by the 
module $\bC[\si^\vee_\chi \cap M_{1,\si}]$. Notice that 
$\si^\vee_\chi \cap M_{1,\si}=\si^\vee_{\mu_{1*} (\chi)} \cap M_{1,\si}$, and the result follows.
\end{proof}

\begin{proposition}\label{pro:F-I} 
If $r_i\geq  s_i$ for $i=1,\ldots,l$. Then 
$$
F:=\mu_{1*}\circ \mu_2^* :\Gamma(\bSi_2)\to \Gamma(\bSi_1)
$$ 
is an injective map of posets:
$$
(\si,\chi)\leq (\si',\chi')\Leftrightarrow
F(\si,\chi) \leq F(\si',\chi').
$$
\end{proposition}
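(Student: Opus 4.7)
\emph{Plan.} The strategy is to reformulate the poset $\Gamma(\bSi_i)$ in terms of ``thresholds'' on the rays of each cone and reduce the statement to an elementary lemma about the ceiling function---the unique place where the hypothesis $r_j\geq s_j$ is used.

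For $(\si,\chi)\in\Gamma(\bSi_i)$, let $\tilde\chi\in M_\bR$ be any extension of $\chi$ from $N_{i,\si}$ and set $\alpha_j(\chi) := \langle\tilde\chi, v_j\rangle$ for each ray $\rho_j\subseteq\si$; then
\[
\si^\vee_\chi \;=\; \tilde\chi + \si^\vee \;=\; \bigcap_{\rho_j\subseteq\si}\{x\in M_\bR \mid \langle x,v_j\rangle \geq \alpha_j(\chi)\},
\]
with $\alpha_j(\chi) \in (1/s_j)\bZ$ for $\chi\in M_{2,\si}$ and $\alpha_j(\chi)\in(1/r_j)\bZ$ for $\chi\in M_{1,\si}$. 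Since $\langle-,v\rangle$ is bounded below on $\tilde\chi_1+\si_1^\vee$ precisely when $v\in\si_1$, the relation $(\si_1,\chi_1)\leq(\si_2,\chi_2)$ is equivalent to the two conditions: (i) $\si_2$ is a face of $\si_1$, and (ii) $\alpha_j(\chi_1)\geq\alpha_j(\chi_2)$ for every ray $v_j\subseteq\si_2$. Unraveling $\mu_{2,\si}^*$ and $\mu_{1,\si*}$ from the definitions, and using the identity $n_j/m_j = r_j/s_j$, one checks that $F$ preserves the underlying cone $\si$ and acts on thresholds by
\[
\alpha_j(F(\chi)) \;=\; \frac{\lceil r_j\,\alpha_j(\chi)\rceil}{r_j}, \qquad \rho_j\subseteq\si.
\]

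Applying this description to both $(\si_i,\chi_i)\in\Gamma(\bSi_2)$ and $F(\si_i,\chi_i)\in\Gamma(\bSi_1)$, the proposition reduces to the following ceiling lemma: for positive integers $r\geq s$ and $\alpha,\alpha'\in(1/s)\bZ$, one has $\alpha\geq\alpha'$ if and only if $\lceil r\alpha\rceil \geq \lceil r\alpha'\rceil$, and the map $\alpha\mapsto\lceil r\alpha\rceil$ is strictly monotone (hence injective). The forward implication is monotonicity of the ceiling; for the converse, if $\alpha<\alpha'$ in $(1/s)\bZ$ then $\alpha'-\alpha\geq 1/s$, so $r(\alpha'-\alpha)\geq r/s\geq 1$, which forces $\lceil r\alpha'\rceil > \lceil r\alpha\rceil$. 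Injectivity of $F$ then follows: if $F(\si_1,\chi_1)=F(\si_2,\chi_2)$ then $\si_1=\si_2$ since $F$ fixes $\si$, and equality of all thresholds forces $\alpha_j(\chi_1)=\alpha_j(\chi_2)$, i.e.\ $\chi_1=\chi_2$. The only real subtlety is checking that the thresholds actually lie in $(1/s_j)\bZ$---without this the converse of the ceiling lemma would fail---but this is immediate from the integrality of $\chi\in M_{i,\si}$ on the lattice generators $s_j v_j$ or $r_j v_j$.
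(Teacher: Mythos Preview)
Your proof is correct and follows essentially the same route as the paper. Your threshold $\alpha_j(\chi)$ is just the paper's integer pairing $\langle \chi, s_j v_j\rangle_{2,\si}$ divided by $s_j$, so your ceiling lemma for $\alpha,\alpha'\in(1/s)\bZ$ is a rescaling of the paper's statement that $k\geq k'\Leftrightarrow \lceil (r_i/s_i)k\rceil\geq \lceil (r_i/s_i)k'\rceil$ for $k,k'\in\bZ$; you additionally supply the one-line justification of the reverse implication, which the paper simply asserts.
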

\begin{proof} For any $\si\in \Si$, let $F_\si= \mu_{1,\si *}\mu^*_{2,\si}: M_{2,\si}\to M_{1,\si}$.
By definition, 
$$
F(\si,\chi)=\left(\si, F_\si (\chi) \right),\quad 
F(\si',\chi')=\left(\si', F_{\si'}(\chi')\right).
$$
The statements (i) and (ii) below  also follow from the definitions.
\begin{enumerate}
\item[(i)] Suppose that $(\si,\chi), (\si,\chi')\in \Ga(\bSi_2)$. Then
\begin{eqnarray*}
&& (\si,\chi)\leq (\si',\chi')\\
&\Leftrightarrow& \si\supset \si' \textup{ and } \langle \chi, s_i v_i \rangle_{2,\si}
\geq \langle \chi',s_i v_i \rangle_{2,\si} \textup{ for all } v_i\in \si'\cap\{ v_1,\ldots, v_l\}.
\end{eqnarray*}
\begin{eqnarray*}
&& (\si,F_\si(\chi))\leq (\si',F_\si(\chi)) \\
&\Leftrightarrow&   \si\supset \si' \textup{ and }
\langle F_\si(\chi), r_i v_i \rangle_{1,\si} \geq \langle F_{\si'}(\chi_i'), r_i v_i\rangle_{1,\si} 
\textup{ for all } v_i \in \si'\cap \{ v_1,\ldots, v_l\}.
\end{eqnarray*}
\item[(ii)] If $(\chi,\si)\in M_{2,\si}$ and $v_i\in \si\cap \{ v_1,\ldots, v_l\}$ then 
$$
\langle F_\si(\chi), r_i v_i\rangle_{1,\si}=\lceil \frac{r_i}{s_i}\langle \chi, s_i v_i \rangle_{2,\si}\rceil.
$$
\end{enumerate}
By (i) and (ii), it suffices to show that for any $k,k'\in \bZ$,  
$$
k\ge k' \Longleftrightarrow \lceil \frac{r_i}{s_i}k \rceil \ge \lceil \frac{r_i}{s_i}k'\rceil.
$$
$\Rightarrow$ is always true, and $\Leftarrow$ is true if $\frac{r_i}{s_i}\geq 1$.
\end{proof}

\begin{theorem}
Suppose that $\mu_1^*K_{\cX_1}\geq \mu_2^*K_{\cX_2}$ (or equivalently, $r_i\geq s_i$ for $i=1,\ldots,l$).
Then the dg functor 
$$
F_{12}':\Coh_T(\cX_2)\to \Coh_T(\cX_1)
$$ 
is cohomologically full and faithful.
\end{theorem}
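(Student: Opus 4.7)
The plan is to pass through the coherent-constructible correspondence and reduce the claim to the combinatorial statement already proved in Proposition \ref{pro:F-I}. Concretely, the CCC of Theorem \ref{thm:ccc} provides quasi-equivalences $\kappa_i:\Perf_T(\cX_i)\stackrel{\sim}{\to} \Sh_{cc}(M_\bR,\Lambda_{\bSi_i})$, and in order to establish that $F'_{12}$ is cohomologically fully faithful it suffices to verify this on a generating set of objects whose morphism complexes are fully accounted for. The quasicoherent theta sheaves $\Theta'_2(\si,\chi)$ form such a generating set for $\ltrp_{\bSi_2}$, and by the Čech-resolution embedding $\Perf_T(\cX_2)\hookrightarrow \ltrp_{\bSi_2}$ established in \cite{stack} it will be enough to show that $F'_{12}$ induces a fully faithful dg functor $\ltrp_{\bSi_2}\to \ltrp_{\bSi_1}$.

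First, I would compute the action of $F'_{12}$ on theta sheaves. Composing the two identities of Proposition \ref{pro:pullback-pushforward-I} yields
\[
F'_{12}(\Theta'_2(\si,\chi))=\mu_{1*}\mu_2^*\Theta'_2(\si,\chi)=\mu_{1*}\Theta'_\cW(\mu_2^*(\si,\chi))=\Theta'_1(F(\si,\chi)),
\]
where $F=\mu_{1*}\circ\mu_2^*$ is precisely the combinatorial map of Proposition \ref{pro:F-I}. Thus $F'_{12}$ sends theta sheaves to theta sheaves along the map $F:\Ga(\bSi_2)\to\Ga(\bSi_1)$.

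Next, I would invoke the description (\ref{eqn:Theta-Theta-prime}) of morphism complexes between theta sheaves as the linearized poset $\Ga(\bSi)_\bC$, i.e.\ either $\bC[0]$ or $0$. By Proposition \ref{pro:F-I}, $F$ is an injection of posets that both preserves and reflects the order, so the induced map
\[
hom(\Theta'_2(\si,\chi),\Theta'_2(\si',\chi'))\longrightarrow hom(\Theta'_1(F(\si,\chi)),\Theta'_1(F(\si',\chi')))
\]
carries $\bC[0]\to\bC[0]$ in the comparable case and $0\to 0$ otherwise. The nontrivial arrow is not zero because it is the image under the functor of an identity-type morphism (concretely, the restriction map of sections used to build $\Theta'$), so it is an isomorphism. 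Combined with injectivity of $F$, this gives cohomological full faithfulness on the subcategory generated by theta sheaves. Standard triangulated-category arguments then propagate this to the triangulated closure $\ltrp_{\bSi_2}$ and, by restriction, to $\Perf_T(\cX_2)=\Coh_T(\cX_2)$.

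The main technical obstacle I expect is the identification $F'_{12}(\Theta'_2)=\Theta'_1(F\,\cdot)$ at the level of the \emph{derived} pushforward: one must know that $\Theta'_\cW(\mu_2^*(\si,\chi))$ has no higher $\mu_1$-pushforwards, so that the formula of Proposition \ref{pro:pullback-pushforward-I} (which is about the underived functors on quasicoherent sheaves) actually computes $R\mu_{1*}L\mu_2^*$. This is where the hypothesis $r_i\ge s_i$ does real work, because it controls the Newton-polyhedral regions $\si^\vee_\chi\cap M_\si$ so that the inclusion of lattice sections is exact in the relevant range; one then checks higher-pushforward vanishing from the explicit Čech description of $\Theta'_\cW$ on the affine open cover $\{\cX_\si\}$. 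Once this acyclicity is in hand, the poset argument above delivers the theorem.
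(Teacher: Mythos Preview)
Your proposal follows essentially the same route as the paper's proof: use Proposition~\ref{pro:pullback-pushforward-I} to see that $F'_{12}$ sends theta sheaves to theta sheaves along the combinatorial map $F$, use Proposition~\ref{pro:F-I} together with the hom description \eqref{eqn:Theta-Theta-prime} to conclude full faithfulness on $\ltrp_{\bSi_2}$, and then restrict to $\Coh_T(\cX_2)$.

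One correction to your final paragraph: the hypothesis $r_i\ge s_i$ does \emph{not} enter in the vanishing of higher pushforwards. In this case $\cW$ and $\cX_1$ share the same underlying fan $\Si$, so $\mu_1:\cW\to\cX_1$ is affine (on each chart it is the map $[\,\bC^d/G'_\si\,]\to[\,\bC^d/G_{1,\si}\,]$ induced by a finite surjection of stabilizer groups), hence $\mu_{1*}$ is exact on quasicoherent sheaves and the formula of Proposition~\ref{pro:pullback-pushforward-I} already computes $R\mu_{1*}$. Likewise $\mu_2$ is flat, so $L\mu_2^*=\mu_2^*$. The inequality $r_i\ge s_i$ is used exclusively in Proposition~\ref{pro:F-I}, to guarantee that $F$ \emph{reflects} the partial order (the direction ``$\Leftarrow$'' there fails without it); preservation of the order and the underived identification of theta sheaves hold regardless.
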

\begin{proof}
The functor $F_{12}':\cQ_T(\cX_2)\to \cQ_T(\cX_1)$, restricted on $\langle \Theta'_2 \rangle \subset \cQ_T(\cX_2)$, is a functor to $\langle \Theta'_1 \rangle$, since it sends any theta sheaf to a theta sheaf. Therefore $F_{12}'$ restricted on $\langle \Theta'_2 \rangle \subset \cQ_T(\cX_2)$ is a full and faithful functor since $F_{12}:=\kappa_1 \circ F_{12}' \circ \kappa_2^{-1}$ is full and faithful due to Lemma \ref{lem:K-I}, Proposition \ref{pro:pullback-pushforward-I}, and Proposition \ref{pro:F-I}. Further restricting this functor to coherent sheaves (i.e. perfect sheaves), we obtain a full and faithful functor $F_{12}':\Coh_T(\cX_2)\to \Coh_T(\cX_1)$.
\end{proof}

\begin{example}
Set $N=\bZ$, $\Si=\{\bR^+,\bR^-\}$, $\beta_1=[3\ -1]$ and $\beta_2=[2\ -1]$. The stacks associated to $\bSi_1=(N,\Si, \beta_1)$ and $\bSi_2=(N,\Si,\beta_2)$ are
$$\cX_1=\bP(1,3),\quad \cX_2=\bP(1,2).$$
Both $\cX_1$ and $\cX_2$ have the same coarse moduli space $\bP^1$. The fiber product $\cW=\bP(1,6)$ is constructed from the stacky 
fan $(N,\Si, [6\  -1])$. Let $\rho_1=\bR^+$ and $\rho_2=\bR^-$. 

For $i=1,2$, let $D_i\subset X=\bP^1$, $\cD_{1,i}\subset \cX_1$, $\cD_{2,i}\subset \cX_2$, and
$D'_i\subset \cW$ be defined as above. In particular, $D_1$ and $D_2$ are the two torus fixed points
in $\bP^1$. Any equivariant line bundle on $\cX_1= \bP(1,3)$ is of the form
$$
\cL_{1, (c_1,c_2)}:= \cO_{\bP(1,3)}(c_1\cD_{1,1}+c_2\cD_{1,2}) = p_1^*\cO_{\bP^1}(\frac{c_1}{3} D_1 + c_2 D_2),\quad c_1,c_2\in \bZ,
$$
whereas any equivariant line bundle on $\cX_2=\bP(1,2)$ is of the form
$$
\cL_{2, (c_1,c_2)}:= \cO_{\bP(1,2)}(c_1\cD_{2,1}+c_2\cD_{2,2}) = p_2^*\cO_{\bP^1}(\frac{c_1}{2} D_1 + c_2 D_2),\quad c_1,c_2\in \bZ.
$$

The Fourier-Mukai functor $F=\mu_{1*}\circ \mu_2^*$ is given by 
$$
F_{12}': \cL_{2,(c_1,c_2)} \mapsto   \cL_{1,(\lfloor \frac{3}{2}c_1\rfloor, c_2)}.
$$
The line bundle $\cL_{2,(c_1,c_2)}$ is $\bQ$-ample iff $\frac{c_1}{2}+ c_2 >0$. In this
case, it corresponds to the costandard sheaf supported on the open interval $(-\frac{c_1}{2},c_2)\subset \bR$. 
The constructible analogue of the Fourier-Mukai functor is
$$
F_{12}: i_{(-\frac{c_1}{2},c_2)!} \bC_{(-\frac{c_1}{2},c_2)}[1] \mapsto 
\begin{cases}
i_{(-\frac{c_1}{2},c_2)!} \omega_{(-\frac{c_1}{2},c_2)} & \textup{if $c_1$ is even},\\
i_{(-\frac{c_1}{2}+\frac{1}{6}, c_2)!}  \omega_{(-\frac{c_1}{2} +\frac{1}{6}, c_2)} & \textup{if $c_1$ is odd}.
\end{cases}
$$
where $i_U: U\hookrightarrow \bR$ is the embedding of the corresponding open subset.
Note that $F_{12}'(\cL_{2,(c_1,c_2)})$ is also $\bQ$-ample.
\end{example}

\subsection{Divisorial contraction: overview} \label{sec:contraction}

Let $N=\bZ^n$ and $\si_{X_2}\subset N_\bR$ be a simplicial cone generated by rays $\rho_1,\dots, \rho_n$. 
Let $v_i$ be the primitive generator of $\rho_i\cap N$, and $v_{n+1}=a_1 v_1+\dots+a_{n'}v_{n'}$ for some $n'\leq n$ 
with all $a_i\in \bQ_{>0}$ such that $v_{n+1}\in N$ is primitive. 
Define $\si_{X_1,i_0}$ be the $n$-dimensional cone generated by $v_i$, $1\leq i \le n+1$ with $i\neq i_0$.
Then
$$
\si_{X_2} =\bigcup_{i_0=1}^{n+1} \si_{X_1,i_0}.
$$
Let $\Si_2$ be the fan consisting of the top dimensional cone $\si_{X_2}$ and
its faces, and let $\Si_1$ be the fan consisting of top dimension cones
$\si_{X_1,i_0}$, $1\leq i_0\leq n+1$, and their faces.
Then there is a morphism of fans $\Si_1\to \Si_2$, which induces
a toric morphism $f: X_1=X_{\Si_1}\to X_2=X_{\Si_2}$. Note that
$X_2$ is an affine simplicial toric variety, and $f$ is a toric
divisorial contraction.

Define
\begin{eqnarray*}
&& \beta_1=\left[\begin{array}{cccc} r_1 v_1 & \cdots & r_n v_n & r_{n+1} v_{n+1} \end{array}\right]:
\bZ^{n+1}\lra N=\bZ^n,\\
&& \beta_2=\left[\begin{array}{ccc} r_1 v_1 & \cdots & r_n v_n \end{array}\right]:
\bZ^n\lra N=\bZ^n.
\end{eqnarray*}
For $i=1,2$, one associates a toric orbifold $\cX_i$ to the stacky fan 
$\bSi_i=(N, \Si_i, \beta_i)$.
Let $\rho_{n+1}$ be the ray $\bR^+\cdot v_{n+1}$, and denote 
$r_{n+1}'v_{n+1}$ to be generator of $\bZ \cdot r_{n+1}v_{n+1} \cap N_{2,\si}$. 
Then
$$
r_{n+1}' v_{n+1} =\sum_{i=1}^{n'} a_i' (r_i v_i),\quad 
a_i':= \frac{ r_{n+1}' a_i}{r_i} \in \bZ.
$$
Setting $b_i=r_i v_i$ for $i=1,\dots, n+1$ as in Section \ref{sec:definition}, $\alpha_i=\frac{r_{n+1}}{r_i}a_i\in \bQ_{>0}$ for $i=1,\dots, n'$ and $\alpha_i=0$ for $i=n'+1,\dots, n$, we have $b_{n+1}=\alpha_1 b_1+\dots+\alpha_{n'}b_{n'}$. Let $b'_{n+1}=r_{n+1}'v_{n+1}=m b_{n+1}$, there is a similar relation $b'_{n+1}=\beta_1 b_1+\dots+ \beta_n'b_n'$, where $\beta_i'=\alpha_i'\in \bZ_{>0}$.

Let $\cW$ be the toric orbifold given by the stacky fan 
$$
\bSi'=(N,\Si_1,\beta'=\left[\begin{array}{cccc} r_1 v_1 & \cdots & r_n v_n & r'_{n+1} v_{n+1} \end{array}\right]).
$$
The identity map $N\to N$ defines morphisms of stacky fans $\bSi'\to \bSi_i$, $i=1,2$, which
induce morphisms $\mu_i:\cW\to \cX_i$ of toric orbifolds.
For $j=1,\ldots,n$, let $\cD_{1,j}$, $\cD_{2,j}$, and $\cD'_j$ be $T$-divisors
associated to $\rho_j$ in $\cX_1$, $\cX_2$, and $\cW$, respectively;
let $\cD_{1, n+1}$ and $\cD'_{n+1}$ be the $T$-divisors 
associated to $\rho_{n+1}$ in $\cX_1$ and $\cW$, respectively. Then
for $i=1,\ldots,n$ we have
$$
\mu_1^*\cD_{1,i} =\cD_i',\quad \mu_2^*\cD_{2,i}= \cD_i'+ a_i' \cD'_{n+1},
$$
where $a_i'=0$ for $n'< i\leq n$.
We also have 
$$
\mu_1^*\cD_{1,n+1} = \frac{r_{n+1}'}{r_{n+1}}\cD'_{n+1}.
$$
\begin{eqnarray*}
&& K_{\cX_1} = -\sum_{i=1}^{n+1} \cD_{1,i},\quad 
\mu_1^* K_{\cX_1}= -\sum_{i=1}^n \cD'_i -\frac{r_{n+1}'}{r_{n+1}} \cD_{n+1}'\\
&& K_{\cX_2} = -\sum_{i=1}^n\cD_{2,i},\quad 
\mu_2^* K_{\cX_2} = -\sum_{i=1}^n \cD'_i -\Bigl(\sum_{i=1}^{n'} a_i' \Bigr)\cD_{n+1}'
\end{eqnarray*}
\begin{lemma}
\begin{enumerate}
\item[(a)] $\mu_1^*K_{\cX_1} \geq \mu_2^*K_{\cX_2} \Leftrightarrow  
\displaystyle{ \sum_{i=1}^{n'} \frac{a_i}{r_i} \geq \frac{1}{r_{n+1}} }$.
\item[(b)] $\mu_1^*K_{\cX_1}\leq \mu_2^* K_{\cX_2} \Leftrightarrow   
\displaystyle{ \sum_{i=1}^{n'} \frac{a_i}{r_i} \leq \frac{1}{r_{n+1}} }$.
\end{enumerate}
\end{lemma}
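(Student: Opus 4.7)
The plan is straightforward algebraic manipulation, using the explicit formulas for $\mu_1^*K_{\cX_1}$ and $\mu_2^*K_{\cX_2}$ as $\bQ$-linear combinations of the $T$-divisors $\cD_1',\ldots,\cD_{n+1}'$ on $\cW$ already displayed just above the statement. Since $\cD_1',\ldots,\cD_{n+1}'$ are independent in the sense that a $\bQ$-divisor $\sum c_i\cD_i'$ on $\cW$ is effective (resp.\ anti-effective) iff all $c_i\geq 0$ (resp.\ $\leq 0$), the two inequalities in the lemma reduce to numerical inequalities between the coefficients of $\cD'_{n+1}$ in $\mu_1^*K_{\cX_1}$ and $\mu_2^*K_{\cX_2}$.

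First I would subtract the two expressions. The coefficients of $\cD'_i$ for $i=1,\ldots,n$ cancel (both are $-1$), so
\[
\mu_1^*K_{\cX_1} - \mu_2^*K_{\cX_2} = \Bigl(\sum_{i=1}^{n'} a_i' \;-\; \frac{r_{n+1}'}{r_{n+1}}\Bigr)\cD_{n+1}'.
\]
Thus $\mu_1^*K_{\cX_1}\geq \mu_2^*K_{\cX_2}$ is equivalent to $\sum_{i=1}^{n'} a_i' \geq r_{n+1}'/r_{n+1}$, and similarly for the reverse inequality.

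Next I would substitute the definition $a_i' = r_{n+1}' a_i / r_i$ from earlier in Section \ref{sec:contraction}, factor out $r_{n+1}'>0$, and divide through. This gives
\[
\sum_{i=1}^{n'} a_i' = r_{n+1}' \sum_{i=1}^{n'} \frac{a_i}{r_i},
\]
so the inequality $\sum_{i=1}^{n'} a_i' \geq r_{n+1}'/r_{n+1}$ is equivalent to $\sum_{i=1}^{n'} a_i/r_i \geq 1/r_{n+1}$, proving (a). Part (b) follows by reversing all inequalities in the same argument.

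There is no real obstacle here; the lemma is essentially a bookkeeping consequence of the pullback formulas. The only point requiring a moment's care is justifying that comparing two $\bQ$-divisors supported on the same set of prime divisors reduces to comparing coefficients, which is immediate because the $\cD_i'$ are distinct prime $T$-divisors on the orbifold $\cW$.
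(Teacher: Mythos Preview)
Your proposal is correct and follows exactly the same approach as the paper: subtract the two pullback expressions so that only the $\cD'_{n+1}$-coefficient survives, substitute $a_i' = r_{n+1}' a_i/r_i$, and factor out the positive constant $r_{n+1}'$. The paper compresses these steps into a single displayed equation, but the content is identical.
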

\begin{proof} From the above computation, 
$$
\mu_1^*K_{\cX_1}-\mu_2^* K_{\cX_2}
= \Bigl(\sum_{i=1}^n a_i'-\frac{r_{n+1}'}{r_n}\Bigr)\cD'_{n+1} 
= \Bigl(\sum_{i=1}^{n'}\frac{a_i}{r_i}  - \frac{1}{r_{n+1}}\Bigr)(r_{n+1}'\cD'_{n+1}).
$$
\end{proof}

\begin{theorem}
Let $\mu_i:\cW\to \cX_i$ be defined as above, and let 
\begin{eqnarray*}
&& F'_{12}:= \mu_{1*}\circ \mu_2^*:\Perf_T(\cX_2)\to \Perf_T(\cX_1) \\ 
&& F'_{21}:= \mu_{2*}\circ \mu_1^*:\Perf_T(\cX_1)\to \Perf_T(\cX_2)
\end{eqnarray*}
be Fourier-Mukai functors. 
\begin{enumerate}
\item[(a)] If $\mu_1^*K_{\cX_1}\geq \mu_2^* K_{\cX_2}$ then $F'_{12}$ is cohomologically full and faithful. 
\item[(b)] If $\mu_1^*K_{\cX_1}\leq \mu_2^* K_{\cX_2}$ then $F'_{21}$ is cohomologically full and faithful.
\item[(c)] If $\mu_1^*K_{\cX_1} =   \mu_2^* K_{\cX_2}$ then $F'_{12}$ and $F'_{21}$ are quasi-equivalences.
\end{enumerate}
\end{theorem}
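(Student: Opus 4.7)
The plan is to adapt the strategy of Section \ref{sec:caseI}: via the CCC (Theorem \ref{thm:ccc}) both $F'_{12}$ and $F'_{21}$ become constructible functors between $\Sh_{cc}(M_\bR,\Lambda_{\bSi_2})$ and $\Sh_{cc}(M_\bR,\Lambda_{\bSi_1})$, so it suffices to track their action on theta sheaves. First I would define combinatorial maps $\mu_i^*:\Ga(\bSi_i)\to\Ga(\bSi')$ and $\mu_{i*}:\Ga(\bSi')\to\Ga(\bSi_i)$ generalizing those of Section \ref{sec:caseI}. Since $\Si'$ and $\Si_1$ share the same underlying fan, the $i=1$ maps are built from the same ceiling-function prescription as before, here applied at the ray $\rho_{n+1}$ where $\mu_1^*\cD_{1,n+1}=(r'_{n+1}/r_{n+1})\cD'_{n+1}$. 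For $i=2$ one must additionally handle the subdivision of $\si_{X_2}$ into the cones $\si_{X_1,i_0}$: a character $\chi\in M_{2,\si_{X_2}}$ is transported to each $M'_{\si_{X_1,i_0}}$ by the natural surjection, and the pushforward selects sections with characters in $M_{2,\si}\subset M'_\si$. The analogue of Proposition \ref{pro:pullback-pushforward-I} would then identify the action of $\mu_i^*$ and $\mu_{i*}$ on quasicoherent theta sheaves with these combinatorial formulas, via the explicit module realization $\bC[\si^\vee_\chi\cap M_\si]$.

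With these in hand I would argue, parallel to Proposition \ref{pro:F-I}, that $F:=\mu_{1*}\circ\mu_2^*$ is an injective map of posets $\Ga(\bSi_2)\to\Ga(\bSi_1)$ precisely under the hypothesis of (a), and symmetrically for $F':=\mu_{2*}\circ\mu_1^*$ under (b). The analysis again reduces to the elementary equivalence $k\geq k' \Leftrightarrow \lceil (r/s)k\rceil\geq \lceil (r/s)k'\rceil$ whenever $r/s\geq 1$, applied at the ray $\rho_{n+1}$. The preceding Lemma unpacks $\mu_1^*K_{\cX_1}\geq\mu_2^*K_{\cX_2}$ into $\sum_{i=1}^{n'}a_i/r_i\geq 1/r_{n+1}$, which is exactly what makes the relevant ratio at least $1$; on the other rays the character maps are already isomorphisms. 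Since hom spaces in $\ltrp_{\bSi_i}$ are concentrated in degree zero and governed entirely by the poset order, injectivity of $F$ on $\Ga(\bSi_2)$ translates into cohomological fullness and faithfulness of the induced functor on $\ltrp_{\bSi_2}$. Restricting to $\Perf_T(\cX_2)\subset \ltrp_{\bSi_2}$ gives (a); (b) follows by interchanging the roles of $\cX_1$ and $\cX_2$.

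For (c), both $F'_{12}$ and $F'_{21}$ are cohomologically fully faithful by (a) and (b). In the equality case $\mu_1^*K_{\cX_1}=\mu_2^*K_{\cX_2}$ all ceiling functions entering $F$ and $F'$ become exact, so the two composites become mutually inverse maps of the posets $\Ga(\bSi_1)$ and $\Ga(\bSi_2)$; combined with the $(\mu_i^*,\mu_{i*})$ adjunction this upgrades to natural isomorphisms of the unit and counit on generating theta sheaves, hence on the entire triangulated hull, proving both functors are quasi-equivalences. The main obstacle lies in verifying the combinatorial description of $\mu_2^*$ on a theta sheaf: because $\mu_2$ contracts $\cD'_{n+1}$, the pullback must be reassembled across the several charts $U_{\si_{X_1,i_0}}$, and the ceiling-function bookkeeping at the exceptional ray $\rho_{n+1}$ --- which is precisely where the $K$-condition becomes active --- must be kept coherent with the subdivision of the dual cones.
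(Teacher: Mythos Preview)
Your plan conflates the situation here with that of Section~\ref{sec:caseI}, and this leads to genuine gaps in all three parts.

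For (a), the composite $F'_{12}=\mu_{1*}\mu_2^*$ does \emph{not} give a map of posets $\Ga(\bSi_2)\to\Ga(\bSi_1)$.  The morphism $\mu_2$ contracts the subdivision, so if $v_{n+1}\in\si$ the pullback $\mu_2^*\Theta'_2(\si,\chi)$ is a genuine complex of $\Theta'_\cW$-sheaves indexed by the subcones $\si_{i_0}\in\Si_1$ refining $\si$; after applying $\mu_{1*}$ one gets a costandard sheaf on a region $F(\si^\vee_\chi)^\circ$ cut out by the $n$ original inequalities \emph{together with} a new one at $\rho_{n+1}$ (Proposition~\ref{pro:II-F}).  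This region is not $(\tau^\vee_\psi)^\circ$ for any single $(\tau,\psi)\in\Ga(\bSi_1)$, so the hom computation cannot be read off from the poset $\Ga(\bSi_1)$.  The paper instead checks directly that set differences $F(\si^\vee_\chi)^\circ - F(\si'^\vee_{\chi'})^\circ$ are contractible (Proposition~\ref{pro:II-hom}); the $K$-inequality enters there as $t'_{n+1}-t_{n+1}\geq\lceil r_{n+1}\sum a_k/r_k\rceil>0$, which is not the single-ratio ceiling argument you describe.

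For (b) the asymmetry is much sharper.  The pushforward $\mu_{2*}$ goes from the refined fan $\Si_1$ to the coarse fan $\Si_2$, and $F'_{21}\Theta'_1(\si_J,\phi)$ is not even finitely generated by $\Theta'_2$-sheaves: Section~\ref{sec:caseIV} builds a Koszul resolution and shows $F_{21}\Theta_1(\si_J,\phi)$ is the costandard sheaf on a ``staircase'' region $\bigcup_{\chi\in\Gamma}(\si_{J'})^\vee_\chi$ (an infinite union).  The hom-preservation then requires explicit deformation retracts (Propositions~\ref{prop:III-hom-1} and \ref{prop:III-hom-2}) and an auxiliary hyperplane lemma (Lemma~\ref{lem:hyperplane}) that uses $\sum\alpha_i\leq 1$.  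None of this is captured by a poset map.

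For (c), your claim that the composites become mutually inverse is false: the example immediately following the theorem shows explicitly that $F'_{12}\circ F'_{21}$ is not the identity on line bundles even when $\mu_1^*K_{\cX_1}=\mu_2^*K_{\cX_2}$.  Part (c) follows formally from (a) and (b), but not by the mechanism you propose.
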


In (c), $F_{12}$ and $F_{21}$ are not inverses of each other in general, as we will see
in the following example.
\begin{example}
$N=\bZ^2$, 
$$
\beta_1=\left[ \begin{array}{rrr}  1& -1 & 0\\ 0 & -2 & -1 \end{array} \right],\quad
\beta_2=\left[ \begin{array}{rr}  1& -1\\ 0 & -2 \end{array} \right],\quad
\beta'=\left[\begin{array}{rrr} 1& -1 & 0\\ 0 & -2 & -2 \end{array} \right].
$$
\begin{eqnarray*}
&& v_1=(1,0), \quad v_2= (-1,-2),\quad v_3= (0,-1), \\
&& r_1=r_2=r_3=1,\quad r_3'=2,\quad a_1=a_2=\frac{1}{2}, \quad a_1'= a_2'=1.
\end{eqnarray*}

$\cX_1$ is the total space of $O_{\bP^1}(-2)$, and $\cX_2=[\bC^2/\bZ_2]$.
Given $c_1, c_2, c_3\in \bZ$, we define
\begin{eqnarray*}
\cL_{1,(c_1,c_2,c_3)}&=& \cO_{\cX_1}(c_1\cD_{1,1}+c_2\cD_{1,2}+c_3\cD_{1,3})\\
\cL_{2,(c_1,c_2)}&=&\cO_{\cX_2}(c_1\cD_{2,1}+c_2\cD_{2,2}). 
\end{eqnarray*}
Then 
$$
F'_{12}(\cL_{2,(c_1,c_2)}) =
\begin{cases} \cL_{1,(c_1,c_2,\frac{c_1+c_2}{2})},  & c_1+ c_2\textup{ is even},\\
 \cL_{1,(c_1,c_2,\frac{c_1+c_2-1}{2})}, & c_1+ c_2\textup{ is odd}.
\end{cases}
$$
$$
\begin{cases}
F'_{21}(\cL_{1,(c_1,c_2,\frac{c_1+c_2}{2})})=\cL_{2,(c_1,c_2)}, & c_1+ c_2\textup{ is even},\\
F'_{21}(\cL_{1,(c_1,c_2,\frac{c_1+c_2+1}{2})}=\cL_{2,(c_1+c_2)} & c_1+ c_2\textup{ is odd}.
\end{cases}
$$
$$
\begin{cases}
F'_{12}\circ F'_{21}(\cL_{1,(c_1,c_2,\frac{c_1+c_2}{2})})=\cL_{1,(c_1,c_2,\frac{c_1+c_2}{2})}, & c_1+ c_2\textup{ is even},\\
F'_{12}\circ F'_{21}(\cL_{1,(c_1,c_2,\frac{c_1+c_2+1}{2})}=\cL_{1,(c_1, c_2, \frac{c_1+c_2-1}{2})} & c_1+ c_2\textup{ is odd}.
\end{cases}
$$

The corresponding functors for constructible sheaves are shown in Figure \ref{fig:crepant}.
\begin{figure}[h]
\begin{center}
\psfrag{+}{$\oplus$}
\psfrag{F12}{\small $F_{12}$}
\psfrag{F21}{\small $F_{21}$}
\includegraphics[scale=0.6]{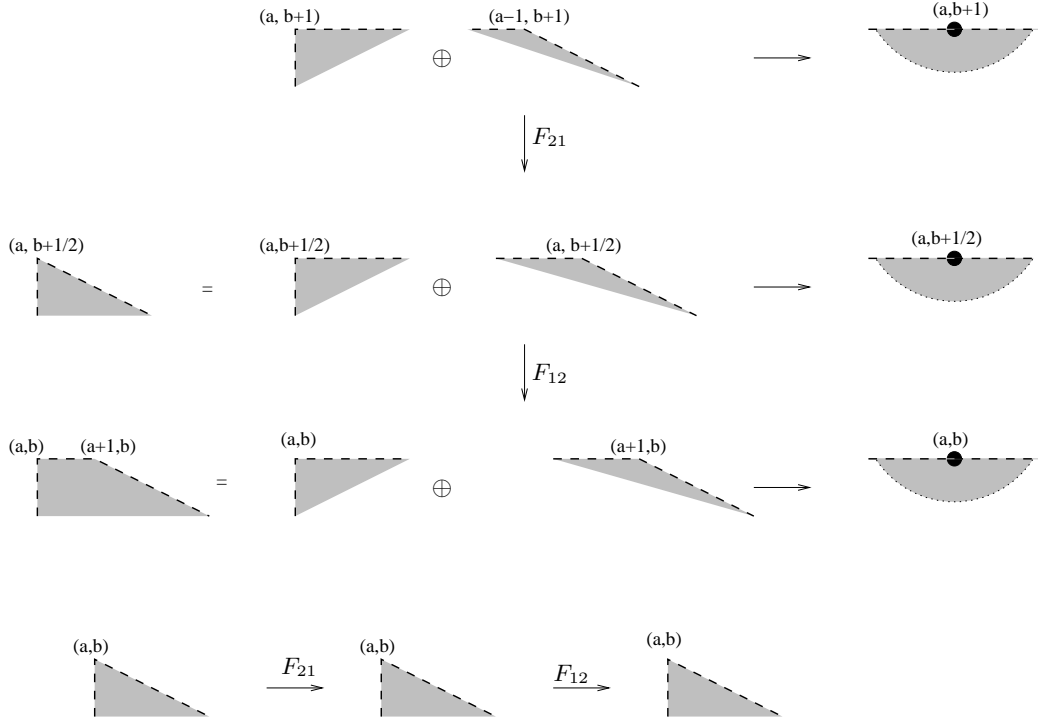}
\end{center}
\caption{$a,b$ are integers. Maps between constructible sheaves are induced by inclusion maps
of open subsets of $\bR^2$.}
\label{fig:crepant}
\end{figure}
\end{example}

\subsection{Divisorial contraction: $F'_{12}= \mu_{1*} \circ \mu_2^*$} \label{sec:caseII} 
Let $\mu_i:\cW\to \cX_i$ be defined as in Section \ref{sec:contraction}. Recall that
there is a toric morphism $f:X_1\to X_2$ which is a divisorial contraction. 
In this section, we study the Fourier-Mukai functor
$F'_{12}=\mu_{1*}\circ \mu_2^*$  and the corresponding functor
for constructible sheaves. We obtain an equivariant version of \cite[Theorem 4.2 (2)]{Ka}.

The pullback map $\mu_2^*$ has been studied in \cite{stack}. 
The identity map $id: N\to N$ induces a morphism $\bSi'\to \bSi_2$ of stacky fans, which induces 
a morphism $\mu_2 : \cW\to \cX_2$ of toric orbifolds. The following square of functors 
commutes up to natural isomorphism by \cite[Theorem 5.16]{stack}:
\begin{equation*}
\begin{CD}
\langle\Theta'_2\rangle & @>{\kappa_2}>>  & \langle \Theta_2\rangle\\
@V{\mu_2^*}VV & &  @V{id_!}VV \\
\langle\Theta'_\cW\rangle &@>{\kappa'}>> & \langle \Theta_\cW\rangle
\end{CD}
\end{equation*}
where $\kappa_2 =\kappa_{\bSi_2}$ and $\kappa'=\kappa_{\bSi'}$.
Since $id$ is the identity map and $id_!$ is cohomologically full and faithful, 
$\mu_2^*$ is also a cohomologically full and faithful functor. 

The toric orbifolds $\cW$ and $\cX_1$ have the same coarse moduli space
$X_1=X_{\bSi_1}$. The pushforward functor $\mu_{1*}$ was described
in terms of theta sheaves in Section \ref{sec:caseI}. We now describe the composition $F'_{12}=\mu_{1*}\circ \mu_2^*$ 
and the corresponding functor $F_{12}$ on constructible sheaves.

Let $\si\in \Si_2$ be a $d$-dimensional cone generated by the rays $\rho_{i_1},\dots, \rho_{i_k}$, $k=1,\dots,d$. Denote $t_k=\langle \chi, r_{i_k} v_{i_k}\rangle_{2,\si}\in\bZ$ for a given $\chi\in N_{2,\si}$.
The theta sheaf $\Theta_2(\si, \chi)\in \langle \Theta_2 \rangle$ is the costandard constructible 
sheaf supported on the submanifold given by
$$
(\sigma^\vee)^\circ_\chi=\{x\in M_\bR: \langle x, v_{i_k}\rangle > \frac{t_k}{r_{i_k}},\ k=1,\dots,d\}
$$
where $\langle\ ,\ \rangle: M_\bR\times N_\bR \to \bR$ is the natural pairing.

\begin{proposition}\label{pro:II-F}
The Fourier-Mukai functor $F_{12}'$ takes a theta sheaf in $\langle \Theta_2' \rangle$ to $\langle \Theta_1' \rangle$. Moreover, if $v_{n+1} \in  \si$, then the analogue constructible functor
$F_{12}=\kappa_1\circ F'_{12}\circ \kappa_2^{-1}: \langle \Theta_2 \rangle \to \langle \Theta_1 \rangle$ 
takes $\Theta_2(\si,\chi)$ to the costandard constructible sheaf on  
$$
F(\si_\chi^\vee)^\circ:=\{x\in M_\bR: \langle x, v_{i_k}\rangle > \frac{t_k}{r_{i_k}},\ k=1,\dots,d; 
\langle x, v_{n+1}\rangle >  \frac{t_{n+1}}{r_{n+1}} \},
$$
where 
$$
t_{n+1}= \lceil r_{n+1}\sum_{i=1}^{n'}\frac{a_k t_k}{r_k} \rceil \in \bZ.
$$
Otherwise if $v_{n+1}\notin \si$ then $F_{12}(\Theta_2(\si,\chi))=\Theta_1(\si,\chi)$.
\end{proposition}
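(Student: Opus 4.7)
The plan is to factor $F'_{12}=\mu_{1*}\circ\mu_2^*$ and analyze each factor using the functoriality of CCC for $\mu_2^*$ and the results of Section \ref{sec:caseI} for $\mu_1$. Since $\mu_2:\cW\to\cX_2$ is induced by the identity on $N$ together with the refinement $\Si_1\to\Si_2$, by Theorem 5.16 of \cite{stack} the functor $\mu_2^*$ corresponds under $\kappa_2,\kappa'$ to $id_!$ on constructible sheaves, which is literally the identity functor on sheaves (only the singular-support constraint changes from $\La_{\bSi_2}$ to $\La_{\bSi'}$). Hence $\kappa'(\mu_2^*\Theta_2'(\si,\chi))$ is the same costandard sheaf supported on $(\si_\chi^\vee)^\circ\subset M_\bR$. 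As for $\mu_1$, the stacks $\cW$ and $\cX_1$ share the coarse moduli $X_1$, and their stacky integers agree except on $\rho_{n+1}$ where $r'_{n+1}=m\,r_{n+1}$. Thus $\mu_1$ is exactly of the type treated in Section \ref{sec:caseI}, and Proposition \ref{pro:pullback-pushforward-I} gives $\mu_{1*}\Theta_\cW'(\tau,\chi')=\Theta_1'(\tau,\mu_{1,\tau*}\chi')$, with the ceiling rule in $\mu_{1,\tau*}$ affecting only the pairing with the ray $\rho_{n+1}$.

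If $v_{n+1}\notin\si$, then $\si$ is still a cone of $\Si_1$ and all its rays have the same stacky integer in $\bSi_2,\bSi',\bSi_1$. Hence $M_{2,\si}=M'_\si=M_{1,\si}$, and composing the two steps yields $\mu_2^*\Theta_2'(\si,\chi)=\Theta_\cW'(\si,\chi)$ and then $\mu_{1*}\Theta_\cW'(\si,\chi)=\Theta_1'(\si,\chi)$ (with $\mu_{1,\si*}$ trivial), giving $F_{12}(\Theta_2(\si,\chi))=\Theta_1(\si,\chi)$ upon applying $\kappa_1$.

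The main case is $v_{n+1}\in\si$, where $\si$ is properly subdivided in $\Si_1$ into top-dimensional subcones $\si^{(k)}$ obtained from $\si$ by replacing some $v_{i_k}$ (with $i_k\leq n'$) by $v_{n+1}$. The plan is to represent $\mu_2^*\Theta_2'(\si,\chi)$ by the \v{C}ech complex associated to the affine open cover $\cW_\si:=\mu_2^{-1}(\cX_{\si,2})=\bigcup_k\cX_{\si^{(k)}}$; each term is a theta sheaf $\Theta_\cW'(\tau,\chi^\tau)$ on an appropriate face $\tau$ of some $\si^{(k)}$, with $\chi^\tau$ the natural pullback-restriction of $\chi$. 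Applying $\mu_{1*}$ term-by-term via Proposition \ref{pro:pullback-pushforward-I} computes the ceiling: using $r'_{n+1}v_{n+1}=\sum_{i=1}^{n'}a'_i(r_iv_i)$ and linearity, $\langle\chi^\tau,r'_{n+1}v_{n+1}\rangle=r'_{n+1}\sum_{k}a_kt_k/r_k$, so $\langle\mu_{1,\tau*}\chi^\tau,r_{n+1}v_{n+1}\rangle_{1,\tau}=\lceil r_{n+1}\sum_{k}a_kt_k/r_k\rceil=t_{n+1}$. Pulled through $\kappa_1$, this imposes the new constraint $\langle x,v_{n+1}\rangle>t_{n+1}/r_{n+1}$ on top of the original $(\si_\chi^\vee)^\circ$, carving out exactly $F(\si_\chi^\vee)^\circ$.

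To finish I verify that the resulting \v{C}ech complex on the constructible side collapses to a single costandard sheaf on $F(\si_\chi^\vee)^\circ$ with no higher cohomology. This reduces to a Mayer--Vietoris argument: the relevant open subsets cut out by the refined cutoffs are convex, and the standard acyclicity of costandard sheaves on intersections of convex open sets (the same combinatorics used in the proof of Theorem \ref{thm:ccc} in \cite{stack}) forces the higher \v{C}ech cohomology to vanish. The main obstacle will be the careful bookkeeping of the characters $\chi^\tau$ across all faces of the \v{C}ech complex and the verification of this Mayer--Vietoris collapse; both are tractable polyhedral combinatorics once the subdivision structure is unpacked.
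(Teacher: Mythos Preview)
Your proposal is correct and follows essentially the same route as the paper: factor $F'_{12}=\mu_{1*}\circ\mu_2^*$, use functoriality to identify $\mu_2^*$ with $id_!$, represent the pullback by the \v{C}ech complex for the subdivision $\si=\bigcup_k\si_k$, and apply $\mu_{1*}$ term by term via Proposition~\ref{pro:pullback-pushforward-I}. The paper's final identification is phrased slightly differently from your Mayer--Vietoris sketch: rather than invoking acyclicity of costandard sheaves on convex intersections abstractly, the paper observes that the pushed-forward characters $\phi_k=\mu_{1,\si_k*}\chi_k$ assemble into a single piecewise linear function $\psi$ on $\si$ which is \emph{convex} (because $t_{n+1}/r_{n+1}=\lceil r_{n+1}\sum a_it_i/r_i\rceil/r_{n+1}\ge\sum a_it_i/r_i$), and hence $\{x:\langle x,v\rangle>\psi(v)\ \forall v\in\si\}=F(\si_\chi^\vee)^\circ$; this convexity is exactly what makes your Mayer--Vietoris collapse go through, so the two formulations are equivalent.
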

\begin{proof}
Suppose that $\si\in \Si_2(d)$ and $v_{n+1}\in \si$. Let
$v_{i_1},\ldots, v_{i_d}$ be defined as above. Then we
may assume $i_k=k$ for $k=1,\ldots, n'$, and
$$
n'< i_{n'+1}<\cdots < i_d\leq n.
$$
We have
$$
\si=\bigcup_{k=1}^{n'} \si_k
$$
where $\si_k\in \Si_1(d)$ is the cone generated by 
$$
v_1, \ldots, v_{k-1}, v_{k+1}, \ldots,v_{n'}, v_{i_{n'+1}},\ldots, v_{i_d}, v_{n+1}.
$$
For $1\leq j_0<\ldots < j_k\leq n'$, let $\si_{j_0 \cdots j_k} =\si_{j_0}\cap \cdots \cap\si_{j_k}\in \Si_1(d-k)$,
and let $\chi_{j_0\ldots j_k}\in M'_{\si_{j_0}\ldots, \si_{j_k}}$ be the image
of $\chi\in M_{2,\si}$ under the group homomorphism $M_{2,\si}\to M'_{\si_{j_0\cdots j_k}}$.
Let $P(\chi_1,\ldots,\chi_{n'}) \in Sh_c(M_\bR;\Lambda_{\bSi'})$ be the following cochain complex:
$$
\bigoplus_{1\leq j_0\leq n'} \Theta_\cW(\si_{j_0}, \chi_{j_0}) \to 
\bigoplus_{1\leq j_0< i_1\leq n'} \Theta_\cW(\si_{j_0 j_1}, \chi_{j_0, j_1}) \to \cdots 
$$
Then $P(\chi_1,\ldots,\chi_{n'})$ is quasi-isomorphic to  
$j_{(\si_\chi^\vee)^\circ !}\bC_{(\si_\chi^\vee)^\circ}[n]$.

If $\tau,\tau'\in \Si_1$ and $\tau\subset \tau'$ then there
are surjective group homomorphisms $f_{1,\tau\tau'}^*:M_{1,\tau'}\to M_{1,\tau}$ and
$f^{\prime *}_{\tau\tau'}:M'_{\tau'}\to M'_{\tau}$.
Recall that the pushforward map $\mu_{1*,\tau}$ is the pushforward map of the characters for a single cone defined in Section \ref{sec:caseI}.
These maps are compatible with the restriction map $f^*$:
$$
\mu_{1,\tau *}\circ f_{\tau\tau'}^{\prime *} = f_{1,\tau\tau'}^*\circ \mu_{1,\tau'*}.
$$
Let  $\phi_{i_0\cdots i_k} := \mu_{1,\si_{i_0\ldots,i_k} *}(\chi_{i_0\cdots i_k})\in M_{1,\si_{i_0\cdots i_k}}$,
and let $P(\phi_1,\ldots,\phi_{n'}) \in Sh_c(M_\bR;\Lambda_{\bSi_1})$ 
be the following cochain complex:
$$
\bigoplus_{1\leq i_0\leq n'} \Theta_1(\si_{i_0}, \phi_{i_0}) \to 
\bigoplus_{1\leq i_0< i_1\leq n'} \Theta_1(\si_{i_0 i_1},\phi_{i_0, i_1}) \to \cdots 
$$
It remains to show that $P(\phi_1,\ldots,\phi_{n'})$
is quasi-isomorphic to $j_{F(\si_\chi^\vee)^\circ !}\bC_{F(\si_\chi^\vee)^\circ}[n]$.
It suffices to prove the following two statements:
\begin{enumerate}
\item[(i)] The piecewise linear function $\psi:\si\to \bR$
defined by $\phi_i\in M_{1,\si_i}$ for $i=1,\dots,{n'}$ is  convex:
$$
\psi(v_{n+1})\geq \sum_{i=1}^{n'} a_i \psi(v_i).
$$
\item[(ii)] $F(\si_\chi^\vee)^\circ  = \{ x\in M_\bR\mid \langle x, v\rangle > \psi(v) \text{ for any $v\in\si \subset N_\bR$\}}$.
\end{enumerate}
(i) and (ii) follow from:  
\begin{eqnarray*}
&& \psi(v_k) = \frac{t_k}{r_k},\quad k=1,\ldots, n', i_1,\ldots, i_{d-n'}, n+1\\
&& \frac{t_{n+1}}{r_{n+1}} =
\frac{1}{r_{n+1}} \lceil  r_{n+1} \sum_{i=1}^{n'}\frac{a_i t_i}{r_i}\rceil
\geq \sum_{i=1}^{n'}a_i \frac{t_i}{r_i}.
\end{eqnarray*}
\end{proof}

\begin{proposition}\label{pro:II-hom}
Suppose that $\displaystyle{\sum_{i=1}^n \frac{a_i}{r_i}\geq \frac{1}{r_{n+1}} }$.
We have the following statements involving the map $F$:
\begin{enumerate}
\item $(\si_\chi^\vee)^\circ \subset (\si_{\chi'}^{\prime \vee})^\circ \Rightarrow 
F(\si_\chi^\vee)^\circ\subset F(\si_{\chi'}^{\prime \vee})^\circ$.
\item $(\si_\chi^\vee)^\circ \not \subset (\si_{\chi'}^{\prime \vee})^\circ
\Rightarrow F(\si_\chi^\vee)^\circ\not \subset F(\si_{\chi'}^{\prime \vee})^\circ$ and
$F(\si_{\chi}^{\vee})^\circ - F(\si_{\chi'}^{\prime \vee})^\circ$ is contractible.
\end{enumerate}
\end{proposition}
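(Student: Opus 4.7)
The plan is to prove the two assertions using the explicit description of $F(\si^\vee_\chi)^\circ$ from Proposition \ref{pro:II-F}: it equals $(\si^\vee_\chi)^\circ$ when $v_{n+1}\notin \si$, and equals $(\si^\vee_\chi)^\circ$ intersected with the open half-space $\{x:\langle x, v_{n+1}\rangle > t_{n+1}/r_{n+1}\}$ when $v_{n+1}\in\si$, with $t_{n+1} = \lceil r_{n+1}\sum_{i=1}^{n'} a_i t_i/r_i\rceil$. Both assertions then reduce to combinatorial and topological analysis of the resulting open convex polyhedra in $M_\bR$.

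For assertion (1), I would proceed by case analysis on whether $v_{n+1}\in\si$ and whether $v_{n+1}\in\si'$. In the main case where $v_{n+1}$ is a ray of both cones, the inclusion $(\si^\vee_\chi)^\circ \subset (\si_{\chi'}^{\prime \vee})^\circ$ forces $\si\supset \si'$ and controls the values $t_k/r_{i_k}$ along the rays of $\si'$. Monotonicity of the ceiling function, combined with linearity of $\sum a_i t_i/r_i$ in $\chi$, then propagates to an analogous inequality between the $t_{n+1}/r_{n+1}$ thresholds, so the inclusion persists after applying $F$. The mixed cases, in which only one of $\si, \si'$ contains $v_{n+1}$, invoke the hypothesis $\sum_{i=1}^{n'} a_i/r_i \geq 1/r_{n+1}$ to show that the newly added constraint on one side is already implied by the constraints present on the other.

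For the non-inclusion part of assertion (2), I would proceed by witness lifting: given $y_0 \in (\si^\vee_\chi)^\circ \setminus (\si_{\chi'}^{\prime \vee})^\circ$ violating some defining inequality $\langle y_0, w\rangle \leq c$ with $w$ a ray of $\si'$, observe that $w$ remains a defining normal of $F(\si_{\chi'}^{\prime \vee})^\circ$; translating $y_0$ along a direction in the interior of the recession cone of $F(\si^\vee_\chi)^\circ$ that pairs to zero with $w$ produces a point of $F(\si^\vee_\chi)^\circ \setminus F(\si_{\chi'}^{\prime \vee})^\circ$, as required.

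The hard part will be the contractibility of $D := F(\si^\vee_\chi)^\circ \setminus F(\si_{\chi'}^{\prime \vee})^\circ$. Writing $F(\si_{\chi'}^{\prime \vee})^\circ = \bigcap_j \{x:\langle x, w_j\rangle > c_j\}$, I set $D_j := F(\si^\vee_\chi)^\circ \cap \{x:\langle x, w_j\rangle \leq c_j\}$, so that $D = \bigcup_j D_j$ is a finite cover by convex pieces whose finite intersections remain convex; passing to a small open thickening yields a good cover. The problem then reduces to contractibility of the nerve $\mathcal{N}$ whose simplices are subsets $S\subset\{w_j\}$ with $\bigcap_{j \in S} D_j \neq \emptyset$. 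The main technical step is to verify that $\mathcal{N}$ is contractible: the key input is that every defining normal of both polyhedra lies in the single strict convex cone $\si_{X_2}$, while the hypothesis $\sum a_i/r_i\ge 1/r_{n+1}$ controls how the $v_{n+1}$-constraint interacts with the others. I expect a Farkas-type analysis of the resulting linear systems to exhibit common points in every maximal intersection $\bigcap_{j \in S} D_j$ for which each $D_j$ is nonempty, showing that $\mathcal{N}$ is a full simplex (after removing empty $D_j$'s) and hence contractible.
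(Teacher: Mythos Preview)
Your approach to part (1) is essentially the paper's, but with one inaccuracy: the hypothesis $\sum a_i/r_i \geq 1/r_{n+1}$ is \emph{not} invoked in (1), even in the mixed cases. When $(\si_\chi^\vee)^\circ \subset (\si_{\chi'}^{\prime\vee})^\circ$ one has $\si\supset\si'$, so $v_{n+1}\in\si'$ forces $v_{n+1}\in\si$; the remaining mixed case ($v_{n+1}\in\si$, $v_{n+1}\notin\si'$) is trivial since $F(\si_\chi^\vee)^\circ\subset(\si_\chi^\vee)^\circ\subset(\si'^\vee_{\chi'})^\circ=F(\si'^\vee_{\chi'})^\circ$. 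In the main case the inclusion $t_k\geq t'_k$ gives $t_{n+1}\geq t'_{n+1}$ by bare monotonicity of $\lceil\cdot\rceil$; no hypothesis required.

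For the contractibility in (2), your nerve-lemma setup is sound, but you have deferred the entire content of the argument to an unspecified ``Farkas-type analysis.'' The paper's proof is far more direct: it simply asserts that the \emph{only} configuration in which $F(\si_\chi^\vee)^\circ\setminus F(\si_{\chi'}^{\prime\vee})^\circ$ fails to be contractible is when $t'_k>t_k$ for every $k\in\{1,\dots,n'\}$ while $t'_{n+1}=t_{n+1}$, and then rules this out in three lines: if each $t'_k-t_k\geq 1$ then $r_{n+1}\sum a_k t'_k/r_k \geq r_{n+1}\sum a_k t_k/r_k + r_{n+1}\sum a_k/r_k \geq r_{n+1}\sum a_k t_k/r_k + 1$, hence $t'_{n+1}\geq t_{n+1}+1$. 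In nerve language this is exactly the computation showing $\bigcap_j D_j\neq\emptyset$: when all coordinate pieces $D_k$ ($k\leq n'$) are nonempty (i.e.\ $t'_k>t_k$), the inequality forces the diagonal piece $D_{n+1}$ to be nonempty too, and one checks directly that $D_{n+1}$ meets every $D_k$, collapsing the nerve to a cone. So your Farkas step, once carried out, would reduce to precisely this inequality---but as written, your proposal does not identify where the hypothesis enters or why the nerve is a full simplex rather than, say, the boundary of one. That is the actual crux, and it deserves to be stated rather than gestured at.
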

\begin{proof}
(1) We only need to show the case $\si$ and $\si'$ both contain $\rho_1,\dots, \rho_{n'}$. It is obvious that $\si\supset \si'$. Let $v_1,\dots, v_{i_d}$ be the generators of $\si$, and $v_1,\dots, v_{i_{d'}}$ generate $\si'$ where $1\le d'\le d$. Similarly to the definition of $t_k$, set $t_k'=\langle \chi',  r_{i_k} v_{i_k}\rangle_{2,\si}$, and 
$$
t'_{n+1}=\lceil r_{n+1}\sum_{i=1}^{n'}\frac{a_k t'_k}{r_k} \rceil.
$$ 
The inclusion $(\si_\chi^\vee)^\circ \subset (\si_{\chi'}^{\prime \vee})^\circ$ gives $t_k\ge t_k'$, and a straightforward calculation shows $t_{n+1}\ge t_{n+1}'$. It follows that $F(\si_\chi^\vee)^\circ\subset F(\si_{\chi'}^{\prime\vee} )^\circ$ by definition.

(2) If $\si\not\supset \si'$ the statement is trivial. In case that $\si\supset \si'$, we must have some $k_0$ such that $t_{i_{k_0}}<t_{i_{k_0}}'$, which followed by $F(\si_\chi^\vee)^\circ\not \subset F(\si_{\chi'}^{\prime \vee})^\circ$. The only situation that $F(\si_\chi^{\vee})^\circ-F(\si_{\chi'}^{\prime\vee})^\circ$ is not contractible is that $t_k'>t_k$ while $t'_{n+1}=t_{n+1}$, but this is impossible since
\begin{eqnarray*}
t_{n+1}'-t_{n+1}&=& \lceil r_{n+1}\sum_{i=1}^{n'}\frac{a_k t'_k}{r_k} \rceil-
\lceil r_{n+1}\sum_{i=1}^{n'}\frac{a_k t_k}{r_k} \rceil\\
 &\geq & \lceil r_{n+1}\sum_{i=1}^{n'}\frac{a_k (t'_k-t_k)}{r_k}\rceil
\geq  \lceil r_{n+1}\sum_{i=1}^{n'}\frac{a_k}{r_k}\rceil
\geq  \lceil 1 \rceil > 0. 
\end{eqnarray*}
\end{proof}

Proposition \ref{pro:II-F} and Proposition \ref{pro:II-hom} give the following theorem:
\begin{theorem}
If $\mu_1^*K_{\cX_1}\geq \mu_2^*K_{\cX_2}$, or equivalently,
$$
\sum_{i=1}^n \frac{a_i}{r_i}\geq \frac{1}{r_{n+1}},
$$ 
then the Fourier-Mukai functor  
$F'_{12}=\mu_{1*}\circ\mu_2^*: \langle \Theta'_2 \rangle \to \langle \Theta'_1 \rangle$ 
is a quasi-embedding. If restricted on the full dg subcategory $\Coh_T (\cX_2)$, $F_{12}'$ is a quasi-embedding of $\Coh_T(\cX_2)$ into $\Coh_T(\cX_1)$.
\end{theorem}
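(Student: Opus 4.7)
The plan is to pass through the CCC and reduce the statement to showing the constructible analogue $F_{12} = \kappa_1 \circ F'_{12} \circ \kappa_2^{-1}$ is a quasi-embedding on theta sheaves. By Theorem \ref{thm:ccc_theta}, $\ltrp_{\bSi_i}$ is quasi-equivalent to $\ltr_{\bSi_i}$ via $\kappa_i$, so it suffices to show $F_{12}: \ltr_{\bSi_2} \to \ltr_{\bSi_1}$ is cohomologically fully faithful. Since each $\ltr_{\bSi_i}$ is the triangulated hull of the linearized poset category $\Gamma(\bSi_i)_\bC$ (whose hom spaces are $\bC[0]$ or $0$ according to the inclusion of $(\si_\chi^\vee)^\circ$), it is enough to verify that $F_{12}$ restricts to a dg fully faithful embedding of poset categories $\Gamma(\bSi_2)_\bC \to \Gamma(\bSi_1)_\bC$.

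The first step is the object-level description: by Proposition \ref{pro:II-F}, $F_{12}$ sends each theta sheaf $\Theta_2(\si,\chi)$ to the costandard sheaf on the explicit open set $F(\si_\chi^\vee)^\circ$, which is itself a theta sheaf in $\Gamma(\bSi_1)_\bC$ (the case $v_{n+1} \notin \si$ being immediate). The second step is the morphism-level verification: by Proposition \ref{pro:II-hom}(1), when $(\si_1)^\vee_{\chi_1} \subset (\si_2)^\vee_{\chi_2}$, the corresponding inclusion $F(\si_1)_{\chi_1}^\vee \subset F(\si_2)_{\chi_2}^\vee$ holds, so the one-dimensional hom on the source is matched by a one-dimensional hom on the target; functoriality forces this map to be an isomorphism (it sends identity to identity). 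By Proposition \ref{pro:II-hom}(2), when the source inclusion fails, so does the target inclusion, hence both hom spaces are $0$. Together these identifications give the required fully faithful dg embedding of $\Gamma(\bSi_2)_\bC$ into $\Gamma(\bSi_1)_\bC$, and therefore a quasi-embedding of $\ltr_{\bSi_2}$ into $\ltr_{\bSi_1}$.

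Finally, for the restriction to $\Coh_T(\cX_2)$, recall from \cite{stack} that $\Coh_T(\cX_2) = \Perf_T(\cX_2)$ sits as a full dg subcategory of $\ltrp_{\bSi_2}$ via \v{C}ech resolution, and similarly for $\cX_1$. Since $\mu_2$ is a morphism of smooth toric DM stacks (so $\mu_2^*$ preserves perfect complexes) and $\mu_1$ is proper (so $\mu_{1*}$ preserves coherence), the image $F'_{12}(\Coh_T(\cX_2))$ lies inside $\Coh_T(\cX_1)$. Hence the quasi-embedding on the larger categories restricts to a quasi-embedding $\Coh_T(\cX_2) \hookrightarrow \Coh_T(\cX_1)$.

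The main obstacle is Proposition \ref{pro:II-hom}(2): one has to exclude the possibility that $F_{12}$ creates spurious morphisms by collapsing two incomparable $(\si^\vee_\chi)^\circ$ into a chain on the target side. The contractibility part of that proposition, combined with the sharp inequality $t'_{n+1} - t_{n+1} > 0$ proved using the hypothesis $\sum a_i/r_i \geq 1/r_{n+1}$, is precisely what rules this out. Without the Kawamata-type inequality on canonical divisors, the ceiling-function estimate fails and the functor can acquire a nontrivial kernel or extra morphisms, breaking full faithfulness.
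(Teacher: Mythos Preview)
Your overall strategy matches the paper's: pass to constructible sheaves via CCC, use Proposition \ref{pro:II-F} to describe the image of theta sheaves, and use Proposition \ref{pro:II-hom} to match hom spaces. However, there is a genuine error in your execution.

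You claim that $F_{12}(\Theta_2(\si,\chi))$, the costandard sheaf on $F(\si_\chi^\vee)^\circ$, ``is itself a theta sheaf in $\Gamma(\bSi_1)_\bC$,'' and you then frame the argument as a fully faithful embedding of \emph{poset} categories $\Gamma(\bSi_2)_\bC \hookrightarrow \Gamma(\bSi_1)_\bC$. This is false when $v_{n+1}\in\si$. The set $F(\si_\chi^\vee)^\circ$ is cut out by $d+1$ inequalities involving $v_{i_1},\ldots,v_{i_d},v_{n+1}$; since $v_{n+1}$ lies in the span of $v_1,\ldots,v_{n'}$, these vectors do not span a simplicial cone of $\Si_1$, and the costandard sheaf on $F(\si_\chi^\vee)^\circ$ is \emph{not} of the form $\Theta_1(\tau,\psi)$ for any $(\tau,\psi)\in\Gamma(\bSi_1)$. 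Indeed, the proof of Proposition \ref{pro:II-F} exhibits it only as a complex $P(\phi_1,\ldots,\phi_{n'})$ of several theta sheaves. So there is no map of posets to speak of, and your reduction collapses. This is exactly the point at which Case II differs from Case I (Section \ref{sec:caseI}), where the fans agree and the poset-embedding argument \emph{is} valid.

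The repair is short and is what the paper does: drop the poset-embedding language and compute $\Ext^*$ directly between the costandard sheaves on the open sets $F(\si_\chi^\vee)^\circ$. For costandard sheaves on open subsets $U,V\subset M_\bR$ one has $\Ext^*(i_{U!}\omega_U,i_{V!}\omega_V)\cong H^*_c(U\setminus V)^\vee$ up to shift, so this is $\bC[0]$ when $U\subset V$ and $0$ when $U\setminus V$ is nonempty and contractible. Proposition \ref{pro:II-hom} provides precisely these two alternatives, and since the theta sheaves generate, full faithfulness follows. Your final paragraph already invokes the contractibility clause, so you have the right ingredient; you just need to use it in place of the incorrect poset claim.
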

\begin{proof}
Passing to constructible sheaves via CCC, it suffices to work on the constructible theta sheaves since they are generators. The theorem follows from the simple facts
$$
\Ext^*(\Theta_2(\si,\chi),\Theta_2(\si',\chi')) =
\begin{cases}
\bC[0] &\text{if $(\si_\chi^\vee)^\circ \subset (\si_{\chi'}'^\vee)^\circ$},\\
0 &\text{if $(\si_\chi^\vee)^\circ \not\subset (\si_{\chi'}'^\vee)^\circ$},
\end{cases}
$$
and
\begin{eqnarray*}
&&
\Ext^*(i_{F(\si_\chi^\vee)^\circ!}(\si_\chi^\vee)^\circ,i_{F(\si_{\chi'}^{\prime \vee})^\circ!}(\si_{\chi'}^{\prime \vee})^\circ)\\
&=&\begin{cases}
\bC[0] & \text{if $F(\si_\chi^\vee)^\circ \subset F(\si_{\chi'}'^\vee)^\circ$,}\\
0 &\text{\parbox[t]{8cm}{$F(\si_\chi^\vee)^\circ\not \subset F(\si_{\chi'}^{\prime\vee})^\circ$ and $F(\si^\vee_\chi)^\circ-F(\si^{\prime \vee}_{\chi'})^\circ$ is contractible.}}
\end{cases}
\end{eqnarray*}

\end{proof}


\begin{figure}[h]
\psfrag{F12}{$F_{12}$}
\psfrag{(a,b)}{\footnotesize $(a,b)$}
\psfrag{(a-1/2,b)}{\footnotesize$(a-\frac{1}{2},b)$}
\psfrag{(a-1/2,b+1/2)}{\footnotesize$(a-\frac{1}{2},b+\frac{1}{2})$}
\includegraphics[scale=0.35]{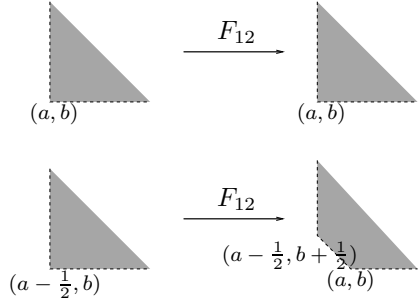}
\caption{$a,b$ are integers. Constructible sheaves are costandard constructible sheaves over shaded regions. Maps between constructible sheaves are induced by inclusion maps
of open subsets of $\bR^2$.}
\label{fig:caseII}
\end{figure}
\begin{example}\label{ex:discrepancy}
$N=\bZ^2$, 
$$
\beta_1=\left[ \begin{array}{rrr} 2 & 0 & 1\\ 0 & 1 & 1 \end{array}\right],\quad
\beta_2 =\left[\begin{array}{rr} 2 & 0 \\0& 1 \end{array}\right],\quad
\beta'=\left[\begin{array}{rrr} 2 & 0 & 2\\0 & 1 & 2 \end{array}\right].
$$
$\cX_2= [\bC/\bZ_2]\times \bC$, $X_2=\bC^2$.
\begin{eqnarray*}
&& v_1=(1,0),\quad v_2=(0,1), \quad v_3 =(1,1), \\
&& r_1=2, \quad r_2=r_3=1, \quad r_3'=2,\quad a_1=a_1'=1 ,\quad a_2=1,\quad a_2'=2.
\end{eqnarray*}
For $c_1, c_2,c_3\in \bZ$, define
\begin{eqnarray*}
\cL_{1,(c_1,c_2,c_3)}&=& \cO_{\cX_1}(c_1\cD_{1,1}+c_2\cD_{1,2}+ c_3\cD_{1,3})\\
\cL_{2,(c_1,c_2)}&=& \cO_{\cX_2}(c_1\cD_{2,1}+c_2\cD_{2,2}) =p_1^* \cO_{\bC^2}(\frac{c_1}{2}D_{2,1}+ D_{2,2})
\end{eqnarray*}
Then 
$$
F'_{12}(\cL_{2,(c_1,c_2)})=\begin{cases}
\cL_{1,(c_1, c_2, \frac{c_1}{2} + c_2)}, & c_1\textup{ is even},\\
\cL_{1, (c_1, c_2,\frac{c_1-1}{2} + c_2)}, & c_1 \textup{ is odd}.
\end{cases}
$$
The corresponding functors for constructible sheaves are shown in Figure \ref{fig:caseII}.
\end{example}

\subsection{Divisorial contraction: $F'_{21}=\mu_{2*}\circ \mu_1^*$} \label{sec:caseIV} 
Let $\mu_i:\cW\to \cX_i$ be defined as in Section \ref{sec:contraction}
and Section \ref{sec:caseII}. In this section, we study the Fourier-Mukai functor
$F'_{21}=\mu_{2*}\circ \mu_1^*$  and the corresponding functor
for constructible sheaves. We obtain an equivariant version of \cite[Theorem 4.2 (4)]{Ka}.

The pullback map $\mu_1^*$ has been studied in \cite{stack}. 
The identity map $id: N\to N$ induces a morphism $\bSi'=(N,\Si_1,\beta')\to \bSi_1=(N,\Si_1,\beta_1)$ 
of stacky fans, which induces  a morphism $\mu_1 : \cW\to \cX_1$ of toric orbifolds. 
The following square of functors  commutes up to natural isomorphism by \cite[Theorem 5.16]{stack}:
$$
\begin{CD}
\langle\Theta'_1\rangle & @>{\kappa_1}>>  & \langle \Theta_1\rangle\\
@V{\mu_1^*}VV & &  @V{{id}_!}VV \\
\langle\Theta'_\cW\rangle &@>{\kappa'}>> & \langle \Theta_\cW\rangle
\end{CD}
$$
where $\kappa_1 =\kappa_{\bSi_1}$ and $\kappa'=\kappa_{\bSi'}$.
Since $id$ is the identity map and $id_!$ is cohomologically full and faithful, 
$\mu_1^*$ is also a cohomologically full and faithful functor. 

It remains to study the pushforward map $\mu_{2*}$. Generally speaking, the
image of $\cQ^\fin_T(\cW)= \langle \Theta'_\cW\rangle$
under the pushforward map $\mu_{2*}:\cQ_T(\cW)\to \cQ_T(\cX_2)$ is
{\em not} contained in $\cQ^\fin_T(\cX_2)= \langle \Theta'_2\rangle$. 

\subsubsection{Notation}
By definition, 
$$
N_{2,\si_{X_2}}=\bigoplus_{i=1}^n \bZ b_i \subset N_\bR.
$$ 
Define
$b^*_1,\ldots, b^*_n\in M_\bR$ by $\langle b_i^*, b_j\rangle =\delta_{ij}$. Then
the dual lattice of $N_{2,\si_{X_2}}$ is  given by  
$$
M_{2,\si_{X_2}}=\bigoplus_{i=1}^n \bZ b_i^* \subset M_\bR.
$$ 

Recall from Section \ref{sec:contraction} that
$$
v_{n+1}=\sum_{i=1}^{n'} a_i v_i,\quad
b_{n+1}= \sum_{i=1}^{n'} \alpha_i b_i,\quad
b_{n+1}' = m b_{n+1}=\sum_{i=1}^{n'}\beta_i b_i,
$$
where 
$$
a_i\in \bQ_{>0}, \quad \alpha_i =\frac{r_{n+1}}{r_i}a_i\in \bQ_{>0}, \quad
m\in \bZ_{>0},\quad
\beta_i = m \alpha_i \in \bZ_{>0}
$$
for $i=1,\ldots,n'$.  We fix the following notation:
\begin{itemize}
\item Let $\bar I=\{1,\dots, n+1\},\ I=\{1,\dots, n\},\ I'=\{1,\dots, n'\}$.
\item Given a proper subset $J$ of $\bar I$, let
$\si_J$ denote the cone in $N_\bR$ generated
by 
$$
\{ v_j\mid j\in J\}.
$$ 
In particular, $\si_\emptyset =\{0\}$,  where $\emptyset$ is the empty set. 
\end{itemize}
With the above notation, we have
$$
\si_{X_2}=\si_I,\quad \si_{i_0}=\si_{\bar I-\{ i_0\}}\textup{ if }i_0\in I',
$$
$$
\Si_1 = \{ \si_J\mid I'\not \subset J\subset \bar I\},\quad
\Si_2=\{\si_J\mid J\subset I\}.
$$
We define a map $\Lambda:=\{ J\subset \bar I\mid I'\not \subset
J\}\to 2^I=\{ J'\subset I\}$, $J\mapsto J'$, such that
$\si_{J'}\in \Si_2$ is the intersection of all 
cones in $\Si_2$ which contains $\si_J\in \Si_1$. More explicitly,
$$
J'=\begin{cases}
J &\textup{if } n+1\notin J,\\
(J-\{n+1\})\cup I' & \textup{if } n+1\in J.
\end{cases}
$$
For $J\in \Lambda$, define
$\cX_{1,J}=\cX_{1,\si_J}$ and $\cW_J=\cW_{\si_J}$;
for $J\in 2^I$, define $\cX_{2,J}=\cX_{2,\si_J}$.

\begin{lemma} Suppose that $J\in \Lambda$, so that $\si_J\in \Si_1$.
If $n+1\notin J$ then
$$
F'_{21}\Theta'_1(\si_J,\phi)=\Theta'_2(\si_J,\phi)
$$
for any $\phi\in M_{1,\si_J}=M_{2,\si_J}$.
\end{lemma}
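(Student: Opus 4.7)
The plan is to factor $F'_{21}=\mu_{2*}\circ\mu_1^*$ and compute each piece on the theta sheaf separately. The driving observation is that the hypothesis $n+1\notin J$ (together with $I'\not\subset J$, built into $J\in\La$) makes $\si_J$ a cone of all three fans $\Si_1,\Si_2$ and the fan of $\cW$, and forces the three stacky structures on $\si_J$ to coincide: since $\beta_1,\beta_2,\beta'$ all send $\tb_i\mapsto r_iv_i$ for $i\in J\subset I$, we have $N_{1,\si_J}=N_{2,\si_J}=N'_{\si_J}=\bigoplus_{i\in J}\bZ(r_iv_i)$, so the identification $\phi\in M_{1,\si_J}=M'_{\si_J}=M_{2,\si_J}$ in the statement is canonical.

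First I would compute $\mu_1^*\Theta'_1(\si_J,\phi)$. Because $\beta'$ and $\beta_1$ agree on every $\tb_i$ except $\tb_{n+1}$, and $n+1\notin J$, the restriction $\mu_1|_{\cW_{\si_J}}$ is an isomorphism $\cW_{\si_J}\stackrel{\sim}{\lra}\cX_{1,\si_J}$ (both are the affine toric orbifold attached to $\si_J$ with the common lattice $N'_{\si_J}=N_{1,\si_J}$). Using the compatibility of pullback with the open embedding $j_{\si_J,\cX_1}$, or invoking Theorem 5.16 of \cite{stack} directly, one gets
\[
\mu_1^*\Theta'_1(\si_J,\phi) \;=\; j_{\si_J,\cW\,*}\cO_{\cW_{\si_J}}(\phi) \;=\; \Theta'_\cW(\si_J,\phi).
\]

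Next I would compute $\mu_{2*}\Theta'_\cW(\si_J,\phi)$. The same combinatorial observation applies: the cones of $\Si_1$ contained in $\si_J$ are exactly the faces of $\si_J$, and these coincide with the cones of $\Si_2$ contained in $\si_J$, since the refinement introduced by the ray $\rho_{n+1}$ affects only cones containing $v_{n+1}$. Together with the agreement of stacky structures, this provides a canonical $T$-equivariant isomorphism $\mu_2|_{\cW_{\si_J}}\colon\cW_{\si_J}\stackrel{\sim}{\lra}\cX_{2,\si_J}$. The commutative square with $j_{\si_J,\cW}$ and $j_{\si_J,\cX_2}$ then yields
\[
\mu_{2*}\Theta'_\cW(\si_J,\phi) \;=\; j_{\si_J,\cX_2\,*}(\mu_2|_{\cW_{\si_J}})_*\cO_{\cW_{\si_J}}(\phi) \;=\; \Theta'_2(\si_J,\phi),
\]
and combining the two steps gives the lemma.

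The only substantive point to verify is the scheme-theoretic equality $\mu_2^{-1}(\cX_{2,\si_J})=\cW_{\si_J}$, so that the restriction of $\mu_2$ is genuinely an isomorphism (and not merely birational) and no higher direct image correction appears in the pushforward. This is a direct combinatorial check from the construction of $\mu_2$ from the morphism of stacky fans $\bSi'\to\bSi_2$: over $\si_J$ neither the underlying sublattice nor the group $G_{\bSi'}=G_{\bSi_2}$ changes, since $n+1\notin J$ removes the only ray on which $\bSi'$ and $\bSi_2$ disagree.
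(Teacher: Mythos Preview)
Your proposal is correct and follows essentially the same approach as the paper, which proves the lemma in a single sentence: ``If $n+1\notin J$ then $\mu_i:\cW\to \cX_i$ restricts to the identity map $\cW_J\to \cX_{i,J}$, $i=1,2$.'' You simply unpack this observation in more detail, computing $\mu_1^*$ and $\mu_{2*}$ on the theta sheaf separately via the common open substack. One minor imprecision: in your last paragraph you write ``the group $G_{\bSi'}=G_{\bSi_2}$,'' but these global groups are not literally equal (one sits in $(\bC^*)^{n+1}$, the other in $(\bC^*)^n$); what you need, and what actually holds, is that the local quotient presentations of $\cW_{\si_J}$ and $\cX_{2,\si_J}$ coincide, which follows from $N'_{\si_J}=N_{2,\si_J}$ as you already noted.
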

\begin{proof} If $n+1\notin J$ then
$\mu_i:\cW\to \cX_i$ restricts to the identity map
$\cW_J\to \cX_{i,J}$, $i=1,2$.
\end{proof}
We will consider the case $n+1\in J$ later.

By definition, $\displaystyle{N_{1,\si_{i_0}}=\bigoplus_{i\in \bar I-\{i_0\}}\bZ b_i}$.
Since the cones $\si_{X_2}$ and $\si_{i_0}$ are $n$-dimensional, one may regard $M_{2,\si_{X_2}}$ and $M_{1,\si_{i_0}}$ as subsets in $M_\bR$, embedded in a canonical way. Straightforward calculations show that
\begin{lemma}
\begin{eqnarray*}
M_{1,\si_{i_0}} &=& \bigoplus_{i\in I'-\{i_0\}}\bZ(b_i^*-\frac{\alpha_i}{\alpha_{i_0}}b^*_{i_0})\oplus \bigoplus \bZ \frac{b_{i_0}^*}{\alpha_{i_0}}
\oplus \bigoplus_{i\in I-I'} \bZ b_i^*.\\
M'_{\si_{i_0}}&=&  \bigoplus_{i\in I'-\{i_0\}}\bZ(b_i^*-\frac{\alpha_i}{\alpha_{i_0}}b^*_{i_0})\oplus \bigoplus \bZ \frac{b_{i_0}^*}{m\alpha_{i_0}}
\oplus \bigoplus_{i\in I-I'} \bZ b_i^*.
\end{eqnarray*}
\end{lemma}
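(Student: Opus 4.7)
The strategy is a direct dual-basis computation. By definition the lattice $N_{1,\si_{i_0}}$ is free of rank $n$, with $\bZ$-basis
\[
\{b_i \mid i\in \bar I - \{i_0\}\} = \{b_1,\dots,\widehat{b}_{i_0},\dots,b_n,b_{n+1}\},
\]
so $M_{1,\si_{i_0}}$ is the unique lattice in $M_\bR$ admitting the basis dual to this one under $\langle\,,\,\rangle$. The plan is simply to exhibit the candidate elements listed on the right-hand side and verify that they pair to Kronecker deltas against the basis above, hence generate $M_{1,\si_{i_0}}$ over $\bZ$.

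Concretely, using $\langle b_i^*, b_j\rangle = \delta_{ij}$ and the key relation $b_{n+1} = \sum_{i\in I'}\alpha_i b_i$, I would check the following three computations. First, for $i \in I' - \{i_0\}$ and $j\in I - \{i_0\}$,
\[
\langle b_i^* - \tfrac{\alpha_i}{\alpha_{i_0}} b_{i_0}^*, b_j\rangle = \delta_{ij},
\qquad
\langle b_i^* - \tfrac{\alpha_i}{\alpha_{i_0}} b_{i_0}^*, b_{n+1}\rangle = \alpha_i - \tfrac{\alpha_i}{\alpha_{i_0}}\cdot \alpha_{i_0} = 0.
\]
Second, for the element $\tfrac{1}{\alpha_{i_0}} b_{i_0}^*$,
\[
\langle \tfrac{1}{\alpha_{i_0}} b_{i_0}^*, b_j\rangle = 0 \ \text{for } j\in I - \{i_0\},
\qquad
\langle \tfrac{1}{\alpha_{i_0}} b_{i_0}^*, b_{n+1}\rangle = \tfrac{1}{\alpha_{i_0}}\cdot \alpha_{i_0} = 1,
\]
so $\tfrac{1}{\alpha_{i_0}}b_{i_0}^*$ is the dual basis vector paired with $b_{n+1}$. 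Third, for $i\in I - I'$, the element $b_i^*$ satisfies $\langle b_i^*, b_{n+1}\rangle = 0$ automatically because $i\notin I'$, and $\langle b_i^*, b_j\rangle = \delta_{ij}$ by construction. Combining the three, the listed elements form the dual $\bZ$-basis to our basis of $N_{1,\si_{i_0}}$, which proves the first identity.

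For the second identity, the only change is that $N'_{\si_{i_0}}$ has basis $\{b_1,\dots,\widehat{b}_{i_0},\dots,b_n,b'_{n+1}\}$ with $b'_{n+1} = m\,b_{n+1}$. Rerunning the check above, the elements $b_i^* - \tfrac{\alpha_i}{\alpha_{i_0}} b_{i_0}^*$ ($i\in I'-\{i_0\}$) and $b_i^*$ ($i\in I-I'$) still pair to the correct Kronecker deltas with the $b_j$, $j\in I-\{i_0\}$, and still vanish against $b'_{n+1}$ (since they vanish against $b_{n+1}$). Only the vector paired with $b'_{n+1}$ changes: we need $\langle\cdot,b'_{n+1}\rangle = 1$, and since $\langle \tfrac{1}{\alpha_{i_0}}b_{i_0}^*, b'_{n+1}\rangle = m$, the corresponding dual vector is rescaled by $1/m$, producing $\tfrac{1}{m\alpha_{i_0}} b_{i_0}^*$. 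This gives the second formula.

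There is no real obstacle; the content of the lemma is the choice of basis. The only point requiring care is bookkeeping: separating the three classes of indices ($i\in I'-\{i_0\}$, $i=i_0$, $i\in I-I'$) and remembering that $b_{n+1}$ has nontrivial expansion coefficients $\alpha_i$ only on $I'$, so that elements indexed by $I-I'$ automatically pair to zero with $b_{n+1}$ and $b'_{n+1}$.
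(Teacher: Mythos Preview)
Your proof is correct and is exactly the ``straightforward calculation'' the paper alludes to: the paper states the lemma with the preamble ``Straightforward calculations show that'' and gives no further argument, so your dual-basis verification is precisely what is intended.
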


\subsubsection{Reduction} We fix $i_0\in I'$.
Define $\mu_{1,i_0}:=\mu_1|_{\cW_{\si_{i_0}}}:\cW_{\si_{i_0}}\to \cX_{1,\si_{i_0}}$ and 
$\mu_{2,i_0}:=\mu_2|_{\cW_{\si_{i_0}}}:\cW_{\si_{i_0}}\to \cX_{2,\si_{X_2}}=\cX_2$.
Define  $F'_{21,i_0}=\mu_{2,i_0 *}\circ \mu_{1,i_0}^*$. Suppose that
$J\in \Lambda$ and $\si_J\subset \si_{i_0}$. Let $j:\cX_{1,J}\to \cX_{1,\si_{i_0}}$
be the open embedding. For every $\phi\in M_{1,\si_J}$, define
$\Theta_{1,i_0}(\si_J,\phi):= j_*\cO_{\cX_{1,J}}(\phi) \in \cQ_T^\fin(\cX_{1,\si_{i_0}})$,
Then
$$
F_{21}\Theta'_1(\si_J,\phi) =j_{\si_{i_0*}}F_{21,i_0}\Theta'_{1,i_0}(\si_J,\phi),
$$
where $j_{\si_{i_0}}: \cX_{1,\si_{i_0}}\hookrightarrow \cX_1$ is the embedding of $\cX_{1,\si_{i_0}}$. 
Every $\si_J\in \Si_1$ is contained in $\si_{i_0}$ for some $i_0\in I'$, so it suffices
to describe $F_{12,i_0}$ for any $i_0\in I'$. 

\subsubsection{Coordinate rings}
For some $i_0\in I'$, we define the following notations.
\begin{itemize}
\item For $i\in I'-\{i_0\}$, define 
$$
x_i=\chi^{b_i^*}\in \bC[M_{2,\si_{X_2}}],\quad 
y_i=\chi^{b_i^*-\frac{\alpha_i}{\alpha_{i_0}} b_{i_0}^*}\in \bC[M_{1,\si_{i_0}}],
$$ 
where $\chi^{b_i^*}$ is defined as in \cite[Section 1.3]{Fu}. 
\item Define
$$
y_{i_0}=\chi^{ \frac{b^*_{i_0}}{\alpha_{i_0} } } \in \bC[M_{1,\si_{i_0}}],\quad
z=\chi^{\frac{b^*_{i_0}}{m\alpha_{i_0}} } \in \bC[M'_{\si_{i_0}}].
$$
In particular, $y_{i_0}=z^m$.
\item For $i\in I-I'$, i.e. $n'+1\leq i\leq n$,  define
$y_i = \chi^{b_i^*}$.
\item Define rings
\begin{eqnarray*}
A_1&:=& \bC[ \si_{i_0}^\vee \cap M_{1,\si_{i_0}}]=\bC[y_1,\ldots,y_n],\\
A'&:=& \bC[\si_{i_0}^\vee \cap M'_{\si_{i_0}}]=\bC[y_1,\ldots, y_{i_0-1},z, y_{i_0+1},y_n],\\
A_2&: =& \bC[\si_{X_2}^\vee\cap M_{2,\si_{X_2}}]=\bC[x_1,\ldots,x_{n'}, y_{n'+1},\ldots, y_n ],
\end{eqnarray*}
\end{itemize}

We define
$$
U_1=\Spec A_1, \quad U'=\Spec A', \quad U_2=\Spec A_2.
$$
Then $U_1$, $U'$, and $U_2$ are isomorphic to $\bC^n$. 
Define
$$
\tT_1=\Spec\bC[M_{1,\si_{i_0}}],\quad
\tT'=\Spec\bC[M'_{\si_{i_0}}],\quad
\tT_2=\Spec\bC[M_{2,\si_{X_2}}],\quad
T=\Spec\bC[M].
$$
Then $\tT_1$, $\tT'$, $\tT_2$, and $T$ are isomorphic to $(\bC^*)^n$.
$\tT_1$, $\tT'$, and $\tT_2$ act on $U_1$, $U'$, and $U_2$,
respectively.

There are short exact sequence of abelian groups
$$
1\to G_1\to \tT_1 \to T\to 1,\quad
1\to G'\to \tT'\to T\to 1,\quad
1\to G_2\to \tT_2\to T\to 1.
$$
where $G_1$, $G'$, and $G_2$ are finite groups.
We have
$$
\cX_{1,\si_{i_0}}=[U_1/G_2],\quad \cW_{\si_0}=[U'/G'], \quad\cX_2= [U_2/G_2].
$$
The morphism $\mu_{1,i_0}:\cW_{\si_{i_0}}\to  \cX_{1,\si_{i_0}}$ lifts to
$g_1: U'\to U_1$, where
$$
g_1(y_1,\ldots, y_{i_0-1}, z, y_{i_0+1}, \ldots, y_n)=
(y_1,\ldots, y_{i_0-1}, z^m, y_{i_0+1}, \ldots, y_n).
$$
The morphism $\mu_{2,i_0}:\cW_{\si_{i_0}}\to \cX_2$ lifts to
$g_2:U'\to U_2$, where
\begin{eqnarray*}
&& g_2(y_1,\ldots, y_{i_0-1}, z, y_{i_0+1}, \ldots, y_n)\\
&=&(y_1 z^{\beta_1},\ldots, y_{i_0-1}z^{\beta_{i_0-1}}, z^{\beta_{i_0}}, 
y_{i_0+1} z^{\beta_{i_0+1}}, \ldots, y_{n'} z^{\beta_{n'}},  y_{n'+1},\ldots, y_n)
\end{eqnarray*}

Suppose that $J\in \Lambda$. Then $J\subset \si_{i_0}$ for 
some $i_0\in I'$. We fix $i_0$ and $J$, and assume that $n+1\in J$.
Define
$$
K_1= \bar I-J-\{i_0\},\quad
K_2 = I- J'.
$$
Define rings
$$
B_1= A_1[y_i^{-1}]_{ i \in K_1},\quad
B' = A'[y_i^{-1}]_{i \in K_1},\quad
B_2=A_2[y_i^{-1}]_{i \in K_2}.
$$
where $A_1[y_i^{-1}]_{i\in K_1}$ is the ring $A_1$ adjoint with $y_i^{-1}$ for all $i\in K_1$,  etc.

We define
$$
V_1=\Spec B_1,\quad V'=\Spec B',\quad V_2=\Spec B_2.
$$
The inclusion $A_1\subset B_1$, $A'\subset B'$, and $A_2\subset B_2$ induce
open embeddings
$$
V_1\subset U_1,\quad V'\subset U',\quad V_2\subset U_2.
$$
We have
$$
\cX_{1,J}=[V_1/G_1],\quad \cW_J=[V'/G'],\quad \cX_{2,J'}=[V_2/G_2].
$$

\subsubsection{Sheaves and modules}
$\Theta'_{1,i_0}(\si_J,\phi)$ corresponds to
a $\tT_1$-equivariant quasicoherent sheaf $\widetilde{\Theta}'_{1,i_0}(\si_J,\phi)$
on $U_1$, and $F_{21,i_0}\Theta'_{1,i_0}(\si_J,\phi)$
corresponds to the $\tT_2$-equivariant quasicoherent sheaf
$g_{2*} g_1^* \widetilde{\Theta}'_{1,i_0}(\si_J,\phi)$ on $U_2$.
Let $H$ be the kernel of $\tT'\to \tT_2$. Define
\begin{eqnarray*}
&& Q_1 = \Gamma(U_1, \widetilde{\Theta}'_{1,i_0}(\si_J,\phi)),\quad
Q'= \Gamma(U',g_1^*\widetilde{\Theta}'_{1,i_0}(\si_J,\phi)),  \\
&& Q_2 = \Gamma(U_2, g_{2*} g_1^* \widetilde{\Theta}'_{1,i_0}(\si_J,\phi) )=\Gamma(U',g_1^*\widetilde{\Theta}'_{1,i_0}(\si_J,\phi) )^H,\\
\end{eqnarray*}
Then
\begin{enumerate}
\item $\widetilde{\Theta}'_{1,i_0}(\si_J,\phi)$ is the $\tT_1$-equivariant quasicoherent sheaf 
on $U_1$ defined by the $\tT_1$-equivariant $A_1$-module $Q_1$.
\item $g_1^* \widetilde{\Theta}'_{1,i_0}(\si_J,\phi)$ is the $\tT'$-equivariant quasicoherent sheaf 
on $U'$ defined by the $\tT'$-equivariant $A'$-module $Q'$.
\item $g_{2*} g_1^*\widetilde{\Theta}'_{1,i_0}(\si_J,\phi)$ is the $\tT_2$-equivariant quasicoherent sheaf 
on $U_2$ defined by the $\tT_2$-equivariant $A_2$-module $Q_2$.
\end{enumerate}

More explicitly, $\phi\in M_{1,\si_J}$ is determined by  $c_i=\langle \phi,b_i\rangle \in \bZ$,  $i\in J$. We have
\begin{eqnarray*}
Q_1&=& \bC[ (\si_J)_\phi\cap M_{1,\si_{i_0}}]= y_{i_0}^{c_{n+1}}\cdot (\prod_{j\in J-\{n+1\}}y_j^{c_j})\cdot B_1\\
Q'&=& \bC[ (\si_J)_\phi\cap M'_{\si_{i_0}}]=z^{mc_{n+1}}\cdot(\prod_{j\in J-\{n+1\}}y_j^{c_j})\cdot B' \\
Q_2&=& \bC[ (\si_J)_\phi\cap M'_{\si_{i_0}}] \cap \bC[M_{2,\si_{X_2}}]\\
&=& x_{i_0}^{\frac{c_{n+1}}{\alpha_{i_0}}}\cdot \prod_{j\in J\cap I'} (x_j x_{i_0}^{-\alpha_j/\alpha_{i_0} })^{c_j}\cdot \prod_{j\in  J'-I'} y_j^{c_j} \cdot 
g(B_1) \cap \bC[M_{2,\si_{X_2}}]
\end{eqnarray*}
where 
\begin{eqnarray*}
g(B_1)&=&\bC[x_j x_{i_0}^{-\alpha_j/\alpha_{i_0}} ]_{j\in J\cap I'}\otimes_{\bC} \bC[x_{i_0}^{\frac{1}{m\alpha_{i_0}}   }]
\otimes_{\bC} \bC[x_j x_{i_0}^{-\alpha_j/\alpha_{i_0}} , x_j^{-1} x_{i_0}^{\alpha_j/\alpha_{i_0}} ]_{j\in K_1\cap I' }\\
&& \otimes_{\bC}\bC[y_j]_{j\in J'-I'} \otimes_{\bC} \bC[y_j, y_j^{-1}]_{j\in K_2}\\
\bC[M_{2,\si_{X_2}}] &=& \bC[x_1, x_1^{-1},\ldots, x_{n'}, x_{n'}^{-1}, y_{n'+1}, y_{n'+1}^{-1},
\ldots, y_n, y_n^{-1}].
\end{eqnarray*}
Here we use $z=x_{i_0}^{\frac{1}{m\alpha_{i_0}}}$ and $y_i=x_i x_{i_0}^{-\frac{\alpha_i}{\alpha_{i_0}}}$ for $i\in I'-\{i_0\}$.

Finally, we remark that
\begin{enumerate}
\item $Q_1$ is a free $B_1$-module of rank 1, and
defines a line bundle $\cO_{V_1}(\phi)$ on $V_1=\Spec B_1$.
\item $Q'$ is a free $B'$-module of rank 1, and
defines a line bundle $\cO_{V'}(\phi)$ on $V'=\Spec B'$.
\item $Q_2$ is a $B_2$-module, and defines a quasicoherent sheaf on $V_2=\Spec  B_2$.
\end{enumerate}
\subsubsection{Koszul resolution}
$Q_2$ is not finitely generated as a $B_2$-module.
The goal of this section is to find a resolution of $Q_2$ by free
$B_2$-modules. The following observations are useful:
\begin{enumerate}
\item[(i)] Let $B=\bC[x_{i_0}, y_{n'+1},\ldots, y_n]\otimes_{\bC}\bC[y_j^{-1}]_{j\in K_2}$. Then
$B$ is a subring of $B_1$, so $Q_2$ can be viewed as a $B$-module.
 We observe that $Q_2$ is a {\em free} $B$-module.
\item[(ii)] $B_2= B[x_j]_{j\in I'-\{i_0\}}$ can be viewed as a $B$-module. 
We have the following exact sequence of $B$-modules (the Koszul complex):
\begin{eqnarray*}
0&\to&( \prod_{j\in I'-\{i_0\}} x_j) B_1
\longrightarrow \cdots \longrightarrow \bigoplus_{i,j\in I'-\{ i_0\}, i<j} x_i x_j B_1\\
&&\longrightarrow \bigoplus_{j\in I'-\{i_0\} } x_j B_1
\longrightarrow B_1\longrightarrow B\to 0.
\end{eqnarray*}
\end{enumerate}

Note that
\begin{itemize}
\item The set  $I'-\{i_0\}$ is the disjoint union of $I'\cap J$ and $I'\cap K_1$. 
\item The set $J'$ is the disjoint union of $I'$ and $J'-I'$.
\end{itemize}

For any $\mathfrak{m}=(m_i)_{i\in I'-\{i_0\}}$, where
$m_i\in  \bZ_{\geq 0}$ if $i\in  I'\cap J$ and $m_i\in \bZ$
if $I'\cap K_1$,
We define $\gamma(\mathfrak m)\in M_{2,\si_{J'}}$ as follows.
\begin{eqnarray*}
\gamma(\mathfrak m)& :=& \lceil \frac{c_{n+1}}{\alpha_{i_0}}-\frac{1}{\alpha_{i_0}}(\sum_{i\in I'\cap J}\alpha_i (c_i+m_i)+
\sum_{i\in I'\cap K_1} \alpha_i m_i)\rceil b^*_{i_0} \\
&& + \sum_{i\in I'\cap J}(c_i+m_i)b^*_i+\sum_{i\in I'\cap K_1} m_i b^*_i+\sum_{i\in J'-I'} c_i b^*_i.
\end{eqnarray*}

Define
$$
\Gamma:=\{\gamma(\mathfrak m)\mid m_i\in \bZ_{\geq 0}\textup{ if   }i\in I'\cap J;\ m_i \in \bZ \textup{ if }i\in I'\cap K_1 \} \subset
M_{2,\si_{J'}}.
$$
For any $\chi=\sum_{j \in J'} k_j b_j^*\in M_{2,\si_{J'}}$, denote the monomial 
$$
f_\chi=\prod_{j\in I'} x_j^{k_j}\cdot \prod_{j\in J'-I'} y_j^{k_j}.
$$
Then $Q_2$ is a free $B$-module generated by 
$\{ f_{\chi}\mid \chi\in \Gamma\}$:
$$
Q_2=\bigoplus_{\chi\in \Gamma} f_{\chi}B.
$$

Multiplying the exact sequence in (ii) by $f_\chi$, and taking the direct sum over all $f_\chi$ for $\chi \in \Gamma$, one arrives at the following resolution of $Q_2$ by free $B_2$-modules:
\begin{eqnarray*}
0\longrightarrow \bigoplus_{\chi\in \Gamma} (\prod_{i\in I'-\{i_0\}}x_i) f_\chi B_2 \longrightarrow \dots \longrightarrow \bigoplus_{\chi\in \Gamma} \bigoplus_{i,j\in I'-\{i_0\},\ i < j} x_i x_j f_\chi B_2 \\
\longrightarrow \bigoplus_{\chi\in \Gamma} \bigoplus_{i\in I'-\{i_0\}} x_i f_\chi B_2
\longrightarrow \bigoplus_{\chi\in \Gamma} f_\chi B_2\longrightarrow Q_2 \longrightarrow 0.
\end{eqnarray*}

\subsubsection{Resolution by theta sheaves}
\begin{lemma}
The Fourier-Mukai transformed sheaf $F_{21}\Theta'(\si_J,\phi)$ admits the following resolution
\begin{eqnarray*}
0&\to& \bigoplus_{\chi\in \Gamma} \Theta'_2(\si_{J'},\chi+\sum_{i\in I'-\{i_0\}}b_i^*) \longrightarrow \dots \longrightarrow \bigoplus_{\chi\in \Gamma} \bigoplus_{i,j\in I'-\{i_0\},\ i < j} \Theta'_2(\si_{J'},\chi+b_i^*+b_j^*) \\
&& \longrightarrow \bigoplus_{\chi\in \Gamma} \bigoplus_{i\in I'-\{i_0\}} \Theta'_2(\si_{J'},\chi+b_i^*)\longrightarrow \bigoplus_{\chi\in \Gamma} \Theta'_2(\si_{J'},\chi)\longrightarrow F'_{21} \Theta'_1(\si_J,\phi) \to 0.
\end{eqnarray*}
\end{lemma}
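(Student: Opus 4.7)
The plan is to read the resolution directly off the Koszul complex of $B_2$-modules constructed at the end of the previous subsection and then translate each term into a quasicoherent theta sheaf. That Koszul complex already exhibits $Q_2$ as the cokernel of an exact complex of free $B_2$-modules of the form $\bigoplus_{\chi\in\Gamma}x_{i_1}\cdots x_{i_k} f_\chi B_2$, so what remains is essentially a bookkeeping step identifying each such module with a theta sheaf and then gluing from the $\si_{i_0}$-chart back to all of $\cX_1$.

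The key identification is as follows. The ring $B_2$ is the coordinate ring of $V_2$, the $G_2$-equivariant affine model of the open chart $\cX_{2,J'}\subset\cX_2$, so a free rank-one $\tT_2$-equivariant $B_2$-module with generator of weight $\chi\in M_{2,\si_{J'}}$ corresponds to the $T$-equivariant line bundle $\cO_{\cX_{2,J'}}(\chi)$. Its pushforward along the open embedding $j:\cX_{2,J'}\hookrightarrow \cX_2$ is by definition $\Theta'_2(\si_{J'},\chi)$. Because $x_i=\chi^{b_i^*}$, multiplication by a monomial $x_{i_1}\cdots x_{i_k}$ shifts the $\tT_2$-character by $b_{i_1}^*+\cdots+b_{i_k}^*$, so the summand $\bigoplus_{\chi\in\Gamma} x_{i_1}\cdots x_{i_k} f_\chi B_2$ is precisely the $B_2$-module of global sections of $\bigoplus_{\chi\in\Gamma}\Theta'_2(\si_{J'},\chi + b_{i_1}^*+\cdots+b_{i_k}^*)$ on $V_2$. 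Substituting these identifications into the Koszul complex yields the claimed complex of theta sheaves, with differentials coming term-wise from the Koszul differentials (which are automatically $\tT_2$-equivariant of the right weight and therefore lie in the one-dimensional hom-spaces of $\Gamma(\bSi_2)_\bC$).

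The main technical obstacle is verifying that the indexing set $\Gamma$ is exactly right, i.e.\ that $\{f_{\gamma(\mathfrak m)}\}$ is a $\bC$-basis of $Q_2$ as a free $B$-module. This is encoded in the ceiling-function definition of $\gamma(\mathfrak m)$, which picks the smallest integer coefficient of $b_{i_0}^*$ that keeps the monomial inside $Q_2 = (Q')^H$ for $H=\ker(\tT'\to\tT_2)$; injectivity of $\mathfrak m\mapsto \gamma(\mathfrak m)$ and exhaustion are then straightforward lattice combinatorics. To finish, one globalizes from the $\si_{i_0}$-chart to $\cX_1$ using the reduction
$$
F'_{21}\Theta'_1(\si_J,\phi) = j_{\si_{i_0}*}F'_{21,i_0}\Theta'_{1,i_0}(\si_J,\phi)
$$
established earlier in this subsection; since $j_{\si_{i_0}*}$ is exact on equivariant quasicoherent sheaves on an affine chart, the local resolution glues to the stated one without modification.
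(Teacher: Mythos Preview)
Your proposal is correct and follows the paper's own approach. The paper states this lemma without an explicit proof because it is meant to be read off directly from the Koszul resolution of $Q_2$ by free $B_2$-modules constructed immediately before; your write-up spells out exactly that translation step (free rank-one $\tT_2$-equivariant $B_2$-module of weight $\chi$ $\leftrightarrow$ $\Theta'_2(\si_{J'},\chi)$, with monomial shifts by $x_i$ corresponding to character shifts by $b_i^*$). One small slip: in your last paragraph you speak of globalizing ``back to all of $\cX_1$'', but the target of $F'_{21}$ is $\cX_2$, which is already affine (a single cone $\si_{X_2}$), so the module $Q_2$ on $U_2$ determines the sheaf globally without any gluing; the reduction formula you quote is only needed to justify that the computation of $Q_2$ can be carried out on the single chart $\cW_{\si_{i_0}}$.
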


Taking the coherent-constructible correspondence functor $\kappa$ to the resolution, we obtain a chain complex of constructible theta sheaves on $M_\bR$
\begin{eqnarray*}
0 &\to & \bigoplus_{\chi\in \Gamma} \Theta_2(\si_{J'},\chi+\sum_{i\in I'-\{i_0\}}b_i^*) \longrightarrow \dots \longrightarrow \bigoplus_{\chi\in \Gamma} \bigoplus_{i,j\in I'-\{i_0\},\ i <  j} \Theta_2(\si_{J'},\chi+b_i^*+b_j^*) \\
&& \longrightarrow \bigoplus_{\chi\in \Gamma} \bigoplus_{i\in I'-\{i_0\}} \Theta_2(\si_{J'},\chi+b_i^*)\longrightarrow 
\bigoplus_{\chi\in \Gamma} \Theta_2(\si_{J'},\chi)\longrightarrow.
\end{eqnarray*}
Although this complex is not finitely-generated by $\Theta_2$-sheaves on $M_\bR$ since it involves countably-many direct sums, it is a constructible sheaf on $M_\bR$. Thus we have obtained a functor denoted by $F_{21}:\langle\Theta_1\rangle \to Sh_{c}(M_\bR;\Lambda_{\bSi_2})$.

For the given $\si_J$ and $\phi\in M_{1,\si_J}$, define the ``Fourier-Mukai transformed set'' 
$$
F(\si_J)^\vee_\phi=\bigcup_{\chi\in \Gamma}(\si_{J'})^\vee_\chi,
$$
while similarly we denote $F((\si_J)^\vee_\chi)^\circ$ to be the interior of the above set. (In case that $n+1\not\in J$, we simply set $F(\si_J)^\vee_\phi=(\si_J)^\vee_\phi$). We have the following proposition characterizing $F_{21}(\Theta_1(\si_J,\phi))$.
\begin{proposition}

$$
F_{21}( \Theta_1(\si_J,\phi)) \cong i_!\omega_{F((\si_J)^\vee_\chi)^\circ},
$$
where $i:((\si_J)^\vee_\chi)^\circ\hookrightarrow M_\bR$ is the embedding of the open subset, and 
$\omega_{F((\si_J)^\vee_\chi)^\circ}$ is the costandard constructible sheaf on this set.
\end{proposition}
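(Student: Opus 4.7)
The plan is to check the claimed isomorphism stalk by stalk. By construction of $F_{21}$ via the Koszul resolution from the preceding lemma and the functor $\kappa_{\bSi_2}$, the object $F_{21}(\Theta_1(\si_J,\phi))$ is represented in $D(Sh_c(M_\bR;\Lambda_{\bSi_2}))$ by the displayed chain complex of constructible theta sheaves, whose $H^0$ must be identified with $i_!\omega_{F((\si_J)^\vee_\phi)^\circ}$ and whose higher cohomology must vanish. Both candidates have singular support in $\Lambda_{\bSi_2}$, so the identification reduces to a stalk computation. (The case $n+1\notin J$ is immediate from the preceding lemma, so we concentrate on $n+1\in J$.)

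By construction each open set $((\si_{J'})^\vee_\chi)^\circ$ is contained in $F((\si_J)^\vee_\phi)^\circ=\mathrm{int}(\bigcup_{\chi\in\Gamma}(\si_{J'})^\vee_\chi)$, producing natural maps $\Theta_2(\si_{J'},\chi)\to i_!\omega_{F((\si_J)^\vee_\phi)^\circ}$. Summing over $\chi\in\Gamma$ gives a morphism from the degree-$0$ term of the Koszul complex to $i_!\omega_{F((\si_J)^\vee_\phi)^\circ}$, which is easily checked to vanish on the image of the Koszul differential; this is the candidate quasi-isomorphism.

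Fix $x\in M_\bR$. The stalk of $\Theta_2(\si_{J'},\chi+\sum_{i\in S}b_i^*)$ at $x$ equals $\omega_{M_\bR,x}$ when $\langle x,b_j\rangle>\langle\chi,b_j\rangle+[j\in S\cap J']$ for every $j\in J'$, and vanishes otherwise. When $x\notin F((\si_J)^\vee_\phi)^\circ$ no term contributes, so both sides are zero. When $x\in F((\si_J)^\vee_\phi)^\circ$, I would group contributions by $\chi$: for each $\chi\in\Gamma(x):=\{\chi\in\Gamma:x\in((\si_{J'})^\vee_\chi)^\circ\}$, the $\chi$-summand of the stalk complex is the Koszul complex $\bigotimes_{i\in A(\chi)}\bigl(\bC\stackrel{1}{\to}\bC\bigr)$ with $A(\chi)=\{i\in I'-\{i_0\}:\langle x,b_i\rangle>\langle\chi,b_i\rangle+1\}$. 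This summand is acyclic when $A(\chi)\neq\emptyset$ and equals $\omega_x$ concentrated in top degree when $A(\chi)=\emptyset$.

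It remains to show that exactly one $\chi\in\Gamma(x)$ satisfies $A(\chi)=\emptyset$. Unpacking the formula for $\gamma(\mathfrak{m})$ forces such a $\chi$ to be $\gamma(\mathfrak{m}_{\max})$, where $m_i=\lceil\langle x,b_i\rangle\rceil-1-c_i$ for $i\in I'\cap J$ and $m_i=\lceil\langle x,b_i\rangle\rceil-1$ for $i\in I'\cap K_1$; one then must verify that $\mathfrak{m}_{\max}$ actually lies in $\Gamma(x)$, i.e.\ the remaining inequality $\chi_{i_0}<\langle x,b_{i_0}\rangle$ holds. This last step is the main obstacle: since increasing any $m_i$ only decreases the ceiling defining the $b_{i_0}^*$-coefficient of $\gamma(\mathfrak{m})$, the maximal choice $\mathfrak{m}_{\max}$ gives the smallest $\chi_{i_0}$, so the hypothesis that some $\mathfrak{m}$ witnesses $x\in F((\si_J)^\vee_\phi)^\circ$ propagates to $\mathfrak{m}_{\max}$. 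The stalk cohomology is then exactly $\omega_x$, matching $i_!\omega_{F((\si_J)^\vee_\phi)^\circ}$ and completing the proof.
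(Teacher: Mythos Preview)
Your proof is correct and follows the same overall strategy as the paper: reduce to a stalk computation, observe that all stalks vanish outside $F((\si_J)^\vee_\phi)^\circ$, and for $x$ inside identify the unique ``maximal'' character $\gamma(\mathfrak{m}_{\max})$ responsible for the one-dimensional cohomology. The paper's $\gamma^0$ is exactly your $\gamma(\mathfrak{m}_{\max})$, defined by the same formula $m_i=\lceil\langle x,b_i\rangle\rceil-1-c_i$ (resp.\ $\lceil\langle x,b_i\rangle\rceil-1$).

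Where you differ is in the organization of the stalk computation. The paper looks directly at the last two terms of the stalk complex, identifies the image of the final differential as $\bigoplus_{\chi\in\Gamma(p)\setminus\{\gamma^0\}}\bC_\chi$, and then writes down a chain map to $\bC[0]$ which it asserts is a quasi-isomorphism; the vanishing of the higher cohomology is left implicit. You instead exploit that the Koszul resolution is already a direct sum over $\chi\in\Gamma$ of subcomplexes, so that at a stalk each $\chi$-summand becomes the elementary Koszul complex $\bigotimes_{i\in A(\chi)}(\bC\xrightarrow{1}\bC)$. This makes the acyclicity for $A(\chi)\neq\emptyset$ and the single surviving copy of $\omega_x$ for $A(\chi)=\emptyset$ completely transparent, and handles all cohomological degrees at once. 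Your monotonicity argument (increasing $m_i$ can only decrease the $b_{i_0}^*$-coefficient) is also a clean way to verify $\gamma(\mathfrak{m}_{\max})\in\Gamma(x)$, which the paper leaves to the reader. Both approaches rely on the same implicit fact that a point in $F((\si_J)^\vee_\phi)^\circ$ already lies in some $((\si_{J'})^\vee_\chi)^\circ$.
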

\label{prop:III-F}
\begin{proof}
In order to prove the resolution of $F_{21}(\Theta_1(\si_J,\phi))$ is quasi-isomorphic to $\omega_{F((\si_J)^\vee_\chi)^\circ}$, we only need to show they are quasi-isomorphic at every stalk $p\in M_\bR$. If $p\not\in F((\si_J)^\vee_\chi)^\circ$ the stalk of the costandard sheaf $(i_!\omega_{F((\si_J)^\vee_\chi)^\circ})_p=0$, while the stalk $(F_{21}(\Theta_1(\si_J,\phi)))_p$ is also a zero complex. It remains to show that when $p\in F((\si_J)^\vee_\chi)^\circ$, the stalk $(F_{21}(\Theta_1(\si_J,\phi)))_p$ is quasi-isomorphic to $(i_!\omega_{F((\si_J)^\vee_\chi)^\circ})_p\cong \bC[0]$.

Let $p=\sum_{i=1}^n p_i b_i^*\in M_\bR$, and $\Gamma(p)=\{\chi\in\Gamma\mid p\in((\si_{J'})^\vee_\chi)^\circ\}$. 
Set $m^0_i=\lceil p_i \rceil -1 - c_i$ for $i\in I'\cap J$, and $m^0_i=\lceil p_i\rceil -1$ if $i\in I'\cap K_1$. The character $\gamma^0:=\gamma(m^0_1,\dots,m^0_{i_0-1},m^0_{i_0+1},\dots, m^0_{n'})$ is the unique element in $\Gamma(p)$ such that 
$\gamma^0-b_i^*\not\in \Gamma(p),\ \text{for any $i\in I'-\{i_0\}$}.$ 
For any $\chi\in \Gamma(p)-\{\gamma^0\}$, denote $$I'_\chi=\{i\in I'-\{i_0\}\mid  \chi-b_i^*\in \Gamma(p)).$$ With these notations, the last two terms of the stalk $(F_{21}(\Theta_1(\si_J,\phi)))_p$ is
$$
\longrightarrow \bigoplus_{\chi\in\Gamma(p)-\{\gamma^0\}} \bigoplus_{i\in I'_\chi} \bC_{\chi} \longrightarrow \bigoplus_{\chi\in\Gamma(p)} \bC_\chi \longrightarrow,
$$
where $\bC_\chi\cong\bC$ is indexed by the character $\chi$. The image of the middle arrow is $\bigoplus_{\chi\in \Gamma(p)-\{\gamma^0\}}\bC_\chi$. Thus one defines a chain map $q:(F_{21}(\Theta_1(\si_J,\phi)))_p\to \bC[0]$ where
\begin{eqnarray*}
q_0: \bigoplus_{\chi\in\Gamma(p)} \bC_\chi &\to& \bC\\
\    (k_\chi)_{\chi\in \Gamma(p)}         &\mapsto& \sum_{\chi\in \Gamma(p)} k_\chi,
\end{eqnarray*}
and $q_j=0$ for $j\ne 0$. This map induces the cohomology map $H^*(q)=id:\bC[0]\to\bC[0]$, and it is a quasi-isomorphism.
\end{proof}

\begin{remark}
Although the definition of $F(\si_J)^\vee_\phi$ relies on the choice of some $i_0\in I'-J$, the above proposition shows it is the support of the cohomology sheaf of $F_{21}(\Theta_1(\si_J,\phi))$, which is independent of the choice of $i_0$.
\end{remark}

\subsubsection{Full and faithful functor}
Let $\si_{J_1}, \si_{J_2} \in \Si_1$, $\phi_1 \in M_{1,\si_{J_1}}$ and $\phi_2\in M_{1,\si_{J_2}}$, 
where $J_1,J_2\in\Lambda=\{J\subset \bar I \mid I'\not\subset J\}$. Define 
$$
c_{1,i}=\begin{cases}
\langle \phi_1, b_i \rangle_{\si_{J_1}},& i\subset J_1, \\ 
-\infty,& \text{otherwise};
\end{cases}
\quad 
c_{2,i}=\begin{cases}
\langle \phi_2, b_i \rangle_{\si_{J_2}},& i\subset J_2, \\ 
-\infty,& \text{otherwise.}
\end{cases}
$$
Define the polyhedral set $C(t_1,\dots, t_{n+1})\subset M_\bR$ to be
$$
C(t_1,\dots, t_{n+1}):=\{x\in M_\bR\mid \langle x,b_i\rangle > t_i\}.
$$
In the remainder of this subsection, we let $\bc_1$ and $\bc_2$ denote 
$(c_{1,1},\ldots,c_{1,n})$ and $(c_{2,1},\ldots,c_{2,n})$, respectively,
and write $\bt$ for $(t_1,\ldots,t_n)$.

It is obvious that $C(\bc_1, c_{1,n+1})=((\si_{J_1})_{ \phi_1}^\vee)^\circ$, and 
$C(\bc_2, c_{2,n+1})=((\si_{J_2})_{\phi_2}^{\vee})^\circ$. Furthermore, define $D(\bt, t_{n+1})$ to be 
$$
D(\bt, t_{n+1}):=\{x\in M_\bR\mid \langle x,b_i\rangle > t_i,\ i=1,\dots, n; 
\langle x,b_{n+1} \rangle \geq t_{n+1}\}.
$$

\begin{lemma}
\label{lem:hyperplane}
If $n+1\in J_1$, for any $\phi_1\in M_{1,\si_J}$  there is an $s_1 \in [ c_{1,n+1}, c_{1,n+1}+1)$ such that 
$D(\bc_1, s_1)\subset F((\si_{J_1})_{\phi_1}^\vee)^\circ$. The same result holds for $J_2$ as well.
\end{lemma}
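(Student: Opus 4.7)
The idea is to give a combinatorial description of $R := F((\si_{J_1})^\vee_{\phi_1})^\circ$ directly from its definition as the interior of $\bigcup_{\chi\in\Ga}(\si_{J_1'})^\vee_\chi$, and then to locate $s_1$ by a discreteness argument on a lattice of values $\sum_{i\in I'}\alpha_i y_i$. Each closed cone $(\si_{J_1'})^\vee_{\gamma(\mathfrak m)}$ is a shifted orthant, and the family is indexed over a discrete set of admissible $\mathfrak m$; a direct local check at a boundary point of one such cone produces a direction lying strictly outside the whole union, so the interior of the union agrees with the union $\bigcup_\chi ((\si_{J_1'})^\vee_\chi)^\circ$ of the open cones. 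Therefore $x\in R$ iff there exists admissible $\mathfrak m$ for which $x_i>c_i+m_i$ ($i\in I'\cap J_1$), $x_i>m_i$ ($i\in I'\cap K_1$), $x_i>c_i$ ($i\in J_1-\{n+1\}-I'$), together with the $i_0$-condition $x_{i_0}>\lceil y(\mathfrak m)\rceil$.

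Next I would show that choosing $\mathfrak m$ to maximize $T(\mathfrak m):=\sum_{I'\cap J_1}\alpha_i(c_i+m_i)+\sum_{I'\cap K_1}\alpha_i m_i$ (namely $m_i=\lceil x_i-c_i\rceil-1$ or $m_i=\lceil x_i\rceil-1$, so that $c_i+m_i$, resp.\ $m_i$, always equals $\lceil x_i\rceil-1$) automatically satisfies the non-$i_0$ strict inequalities, and makes $y(\mathfrak m)$ as small as possible. Using the equivalence $\lceil y_{\min}\rceil<x_{i_0}\iff y_{\min}\le\lceil x_{i_0}\rceil-1$, the $i_0$-condition collapses to the clean inequality
\[
\sum_{i\in I'}\alpha_i(\lceil x_i\rceil-1)\geq c_{n+1}.
\]
Thus $x\in R$ iff $x_i>c_i$ for all $i\in J_1-\{n+1\}$ together with this displayed inequality.

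Finally, I would locate $s_1$ as follows. For $x\in D(\bc_1,s_1)$, the boundary conditions $x_i>c_{1,i}=c_i$ for $i\in J_1\cap I$ are automatic; setting $y_i:=\lceil x_i\rceil\in\bZ$, one has $y_i\geq c_i+1$ for $i\in I'\cap J_1$ and $\sum_{I'}\alpha_i y_i\geq\sum_{I'}\alpha_i x_i=\langle x,b_{n+1}\rangle\geq s_1$, while the required inequality becomes $\sum_{I'}\alpha_i y_i\geq c_{n+1}+\sum_{I'}\alpha_i$. The set
\[
\Lambda:=\Bigl\{\sum_{i\in I'}\alpha_i y_i : y_i\in\bZ,\ y_i\geq c_i+1 \text{ for } i\in I'\cap J_1\Bigr\}\subset\bR
\]
is contained in a shifted rank-one lattice $\tfrac{1}{q'}\bZ$ because the $\alpha_i$ are positive rationals, so $\Lambda\cap[c_{n+1},c_{n+1}+\sum_{I'}\alpha_i)$ is finite; let $M^*$ be its supremum (take $M^*=-\infty$ if empty), so that $M^*<c_{n+1}+\sum_{I'}\alpha_i$. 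Under the standing hypothesis $\mu_1^*K_{\cX_1}\leq \mu_2^*K_{\cX_2}$ of Section~\ref{sec:caseIV}, which is equivalent to $\sum_{I'}\alpha_i\leq 1$, we conclude $M^*<c_{n+1}+1$; consequently any $s_1\in[c_{n+1},c_{n+1}+1)$ with $s_1>M^*$ works, because for each such $x\in D(\bc_1,s_1)$ the value $\sum_{I'}\alpha_i y_i\in\Lambda$ exceeds $M^*$ and hence, by the definition of $M^*$ as the supremum of $\Lambda$ strictly below $c_{n+1}+\sum_{I'}\alpha_i$, must in fact lie in $[c_{n+1}+\sum_{I'}\alpha_i,\infty)$. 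The proof for $J_2$ is verbatim.

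The main obstacle I expect is the first step, the identification of the interior of the union of closed cones with the union of the open cones, because this can fail in general for polyhedral arrangements; it requires a careful local check at points lying on a hyperplane shared by several cones, showing that such points always admit a direction escaping the entire arrangement. The remaining steps are largely bookkeeping with ceilings, rationality of the $\alpha_i$, and the discrepancy inequality $\sum_{I'}\alpha_i\leq 1$.
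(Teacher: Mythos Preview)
Your core argument is correct and aligns closely with the paper's proof: both choose the same ``maximal'' $\mathfrak m$ (namely $m_i=\lceil x_i\rceil-1-c_{1,i}$ for $i\in J_1\cap I'$ and $m_i=\lceil x_i\rceil-1$ for $i\in I'\cap K_1$), both invoke the discrepancy hypothesis $\sum_{i\in I'}\alpha_i\le 1$, and both finish with a discreteness argument exploiting the rationality of the $\alpha_i$. Your discreteness step (via the finite set $\Lambda\cap[c_{n+1},c_{n+1}+\sum\alpha_i)$) is a clean variant of the paper's (which instead looks at the finite set of fractional parts $A(\phi_1)=\{u(\fm')+1-\lceil u(\fm')\rceil\}$), and your derivation of the intermediate criterion $\sum_{I'}\alpha_i(\lceil x_i\rceil-1)\ge c_{n+1}$ is a nice repackaging.

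The one point worth noting is that the step you yourself flag as the main obstacle---showing that the interior of $\bigcup_\chi(\si_{J_1'})^\vee_\chi$ equals $\bigcup_\chi((\si_{J_1'})^\vee_\chi)^\circ$---is \emph{not needed} for this lemma. You only use the direction ``clean inequality $\Rightarrow$ $x$ lies in some open cone $\Rightarrow$ $x\in R$'', which follows from the trivial containment $\bigcup_\chi((\si_{J_1'})^\vee_\chi)^\circ\subset R$. The paper proceeds directly in this way: given $x\in D(\bc_1,s_1)$, it writes down the specific $\mathfrak m$ above and checks $x\in((\si_{J_1'})^\vee_{\gamma(\mathfrak m)})^\circ$ by a chain of inequalities, without ever characterizing $R$ completely. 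So you can drop your first paragraph entirely and start from the choice of $\mathfrak m$; what remains is essentially the paper's proof, arranged slightly differently.
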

\begin{proof}
Let $x=x_1b_1^*+\dots+x_{n+1} b^*_n\in D(\bc_1, s_1)$, for some 
$s_1=c_{1,n+1}+\epsilon$ where $\epsilon>0 $ will be determined 
below. 
Recall that in the definition of $F((\si_{J_1})_{\phi_1}^\vee)^\circ$, 
we have chosen an $i_0 \in I'-J_1$. Without the loss of generality, in this proof we assume $i_0=1$. Set 
$$
m_i=\begin{cases}\lceil x_i \rceil -1 -c_{1,i},& i\in J_1\cap I',\\ 
\lceil x_i \rceil -1 ,&  i\in I'-(J_1\cup \{1\},) 
\end{cases}
$$
and $\mathfrak{m}=(m_2,\ldots, m_{n'})\in \bZ^{n'-1}$. It suffices to show that $x\in ((\si_{J'_1})^\vee_{\gamma(\mathfrak{m})})^\circ$  after we specify a particular $\epsilon\in (0,1)$ (which depends on $\phi_1$ but not on $x$). The coordinate $x_1$ satisfies
\begin{eqnarray*}
x_1 &\geq& \frac{1}{\alpha_1}\bigl(s_1-\sum_{i=2}^{n'}\alpha_i x_i\bigr)\\
\   &\geq& \frac{1}{\alpha_1}\bigl(s_1-\sum_{i\in J_1\cap I'} (m_i+ c_{1,i}+1)\alpha_i - \sum_{i\in I'-(J_1\cup \{1\})}(m_i+1)\alpha_i\bigr)\\
\   &\geq& \frac{1}{\alpha_1}\bigl(c_{1,n+1}+(\epsilon-1)+1-\sum_{i\in J_1\cap I'} (m_i+ c_{1,i}+1)\alpha_i - \sum_{i\in I'-(J_1\cup \{1\})}(m_i+1)\alpha_i\bigr)\\
\   &\geq& \frac{1}{\alpha_1}\bigl(c_{1,n+1}+(\epsilon-1)+\alpha_1-\sum_{i\in J_1\cap I'} (m_i+ c_{1,i})\alpha_i - \sum_{i\in I'-(J_1\cup \{1\})}m_i\alpha_i \bigr).
\end{eqnarray*}
The last inequality depends on the fact $\alpha_1+\dots+\alpha_{n'}\leq 1$.  For any $\mathfrak{m}'=(m_2',\ldots,m_{n'}')\in \bZ^{n'-1}$, define
$$
u(\fm')=\frac{1}{\alpha_1}\bigl(c_{1,n+1}-\sum_{i\in J_1\cap I'} (m'_i+ c_{1,i})\alpha_i - \sum_{i\in I'-(J_1\cup \{1\})}m'_i\alpha_i\bigr).
$$ 
Since $\alpha_1,\ldots, \alpha_{n'}$ are rational numbers, 
$$
A(\phi_1) :=\{ u(\fm')+1-\lceil u(\fm')\rceil\mid \fm'\in \bZ^{n'-1}\}
$$
is a finite subset of $(0,1]$. Define
$$
\epsilon:= 1-\frac{\alpha_1}{2} \min A(\phi_1) \in (0,1).
$$
Then
$$
x_1\geq u(\fm)+1 + \frac{\epsilon-1}{\alpha_1} \geq  \lceil u(\fm)\rceil + \frac{1}{2}\min A(\phi_1) > \lceil u(\fm)\rceil,
$$
which implies that $x\in ((\si_{J'_1})^\vee_{\gamma(\mathfrak{m})})^\circ$.
\end{proof}

The lemma above implies the relation $D(\bc_1,s_1)\subset F((\si_{J_1})^\vee_{\phi_1})^\circ \subset C(\bc_1,c_{1,n+1})$. 
Moreover, given 
$$
x=x_1b_1^*+\dots+ x_n b^*_n\in F((\si_{J_1})^\vee_{\phi_1})^\circ - D(\bc_1,s_1)
$$ 
and any $l\in I'$, there is a unique 
$$
r_{J_1,\phi_1,l}(x) = x_1 b^*_1+\dots+ x_{l-1}b^*_{l-1}+\hat x_l b_l^*+x_{l+1}b_{l+1}^*+\dots+x_nb_n^*
$$ 
such that $\langle r_{J_1,\phi_1,l}(x) , b_{n+1} \rangle = s_1$, where $\hat x_l\geq x_l$. 
Meanwhile, given $$x=x_1b_1^*+\dots x_n b^*_n\in C(\bc_1,c_{1,n+1})-F((\si_{J_1})^\vee_{\phi_1})^\circ$$ and any $l\in I'$, 
there is also a unique 
$$r_{J_1,\phi_1,l}'= x_1b_1^*+\dots x_{l-1}b_{l-1}^*+\hat x'_l b_l^*+ x_{l+1}b_{l+1}^*+\dots+ x_n b_n^*$$ 
such that $\langle r_{J_1,\phi_1,l}'(x), b_{n+1} \rangle=c_{1,n+1}$, where $\hat x'_l \leq x_l$.

\begin{proposition}
\label{prop:III-hom-1}
Let $J_1$ and $J_2$ be two proper subsets of $\bar I$ such that $n+1\in J_1,J_2$. If $((\si_{J_1})^\vee_{\phi_1})^\circ\not\subset ((\si_{J_2})^\vee_{\phi_2})^\circ$, then $F((\si_{J_1})^\vee_{\phi_1})^\circ \not \subset F((\si_{J_2})^\vee_{\phi_2})^\circ$, and $F((\si_{J_1})^\vee_{\phi_1})^\circ-F((\si_{J_2})^\vee_{\phi_2})^\circ$ is a contractible set.
\end{proposition}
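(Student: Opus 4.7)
The strategy is to handle the two assertions separately, leveraging the sandwich $D(\bc_i,s_i)\subset F((\si_{J_i})^\vee_{\phi_i})^\circ\subset C(\bc_i,c_{i,n+1})$ with $s_i\in[c_{i,n+1},c_{i,n+1}+1)$ from Lemma \ref{lem:hyperplane}. Non-containment of $((\si_{J_1})^\vee_{\phi_1})^\circ=C(\bc_1,c_{1,n+1})$ in $((\si_{J_2})^\vee_{\phi_2})^\circ=C(\bc_2,c_{2,n+1})$ splits into two cases: \textbf{(a)} some side inequality fails, i.e.\ there is $i_*\in J_2$, $i_*\neq n+1$, with $c_{1,i_*}<c_{2,i_*}$ (using the convention $c_{1,i_*}=-\infty$ if $i_*\notin J_1$); or \textbf{(b)} $J_1\supset J_2$, $c_{1,i}\geq c_{2,i}$ for $i\in J_2\setminus\{n+1\}$, and only $c_{1,n+1}<c_{2,n+1}$ fails.

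For non-containment of the $F$-images, in case (a) I would construct a witness $x\in D(\bc_1,s_1)$ with $\langle x,b_{i_*}\rangle=c_{2,i_*}$ by fixing this coordinate and making the other $\langle x,b_j\rangle$, $j\in J_1\setminus\{i_*\}$, large enough to force $\langle x,b_{n+1}\rangle=\sum_{k\in I'}\alpha_k\langle x,b_k\rangle\geq s_1$; this $x$ is excluded from $F((\si_{J_2})^\vee_{\phi_2})^\circ\subset C(\bc_2,c_{2,n+1})$ by the failure at $i_*$. In case (b), integrality combined with the lemma gives $s_1<c_{1,n+1}+1\leq c_{2,n+1}$, so any $x\in D(\bc_1,s_1)$ with $\langle x,b_{n+1}\rangle\in[s_1,c_{2,n+1})$ witnesses the non-containment.

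For contractibility of $U\setminus V$, with $U=F((\si_{J_1})^\vee_{\phi_1})^\circ$ and $V=F((\si_{J_2})^\vee_{\phi_2})^\circ$, the plan is to construct an explicit deformation retraction onto a convex subset. In case (a), the half-space $H=\{\langle x,b_{i_*}\rangle\leq c_{2,i_*}\}$ is disjoint from $V$, and $U\cap H\subset U\setminus V$; a flow in a suitable direction of $\si_{J_1'}^\vee$ retracts $U\setminus V$ onto $U\cap H$, which then contracts onto its intersection with the convex core $C(\bc_1,c_{1,n+1}+1)\cap H$. In case (b), one would retract $U\setminus V$ onto the affine slice $U\cap\{\langle x,b_{n+1}\rangle=c_{2,n+1}\}$: this slice is disjoint from $V$ (since $V\subset\{\langle x,b_{n+1}\rangle>c_{2,n+1}\}$) and is a convex polyhedron in $U$.

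The hardest part will be controlling the staircase structure of the boundaries of $U$ and $V$ in the $b_{n+1}$-direction: each $F((\si_J)^\vee_\phi)^\circ$ is the interior of an infinite union of shifted convex cones (Proposition \ref{prop:III-F}), hence is genuinely non-convex. The proposed flows must remain inside $U$ as they pass through the staircase of $\partial V$, and one must verify that the two staircases mesh so that $U\setminus V$ does not develop handles or holes. The essential geometric input ruling out bad topology is the hypothesis $\sum_i\alpha_i\leq 1$ (equivalently $\mu_1^*K_{\cX_1}\leq\mu_2^*K_{\cX_2}$) underlying Lemma \ref{lem:hyperplane}: this confines each staircase to strictly less than one lattice unit, forcing $U\setminus V$ to lie within a single ``band'' of $U$'s staircase and inherit contractibility from a convex model.
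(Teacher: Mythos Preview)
Your outline shares the paper's overall architecture: the same two-case split (some coordinate $l\neq n+1$ fails versus only $l=n+1$ fails), the same reliance on the sandwich $D(\bc_i,s_i)\subset F((\si_{J_i})^\vee_{\phi_i})^\circ\subset C(\bc_i,c_{i,n+1})$ from Lemma~\ref{lem:hyperplane}, and equivalent witness constructions for non-containment.

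For contractibility, however, the paper proceeds differently, and your case (a) has a genuine gap. The paper does \emph{not} retract onto $U\cap H$ or onto an affine slice. Instead it retracts $U\setminus V$ onto the convex-cone difference $D(\bc_1,s_1)-C(\bc_2,c_{2,n+1})$, using the explicit single-coordinate maps $r_{J_1,\phi_1,i_{2,0}}$ and $r'_{J_2,\phi_2,i_{1,0}}$ defined just before the proposition. These slide only the $i_{2,0}$-th (respectively $i_{1,0}$-th) coordinate, with $i_{2,0}\in I'\setminus J_2$ and $i_{1,0}\in I'\setminus J_1$, so as to move $\langle x,b_{n+1}\rangle$ to a prescribed value while leaving every other $\langle x,b_j\rangle$ fixed. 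The choice $l\in I'\setminus J_k$ is the crux: being in $I'$ means moving $x_l$ moves $\langle x,b_{n+1}\rangle$; being outside $J_k$ means there is no corresponding side-constraint to violate. The staircase of $U$ is thus pushed \emph{up} into $D(\bc_1,s_1)$, and the residual staircase of $V$ is pushed \emph{down} to the hyperplane $\langle x,b_{n+1}\rangle=c_{2,n+1}$; what remains is a difference of two honest convex cones.

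Your proposed flow in case (a), by contrast, must decrease $\langle x,b_{i_*}\rangle$ to enter $H=\{\langle x,b_{i_*}\rangle\le c_{2,i_*}\}$. When $i_*\in I'$ (which certainly occurs), every direction $w\in\si_{J_1'}^\vee$ satisfies $\langle w,b_{i_*}\rangle\ge 0$, so such a flow cannot reach $H$; and any flow that \emph{does} decrease $\langle x,b_{i_*}\rangle$ simultaneously decreases $\langle x,b_{n+1}\rangle=\sum_k\alpha_k\langle x,b_k\rangle$ and can exit $U$ through its lower staircase. Your last paragraph correctly identifies this meshing of staircases as the hard point, but the stated target $U\cap H$ does not resolve it. The paper's trick---collapse the staircase of $U$ first onto the convex $D(\bc_1,s_1)$, then subtract a convex set---is what makes the argument go through.
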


\begin{proof}
Since $((\si_{J_1})_{\phi_1}^\vee)^\circ \not \subset ((\si_{J_2})_{\phi_2}^{\vee})^\circ$, we have some $c_{1,l }< c_{2,l}$ for some $l$. Recall that in the definition of $F(\si_J)^\vee_\chi$ we have chosen an $i_0\in I'-J$. Here we fix $i_{1,0}\in I'-J_1$ and $i_{2,0} \in I'-J_2$. We prove the statement in the following two cases.

Case $l=n+1$: Let $s_1\in [c_{1,n+1}, c_{1,n+1}+1)$ be as given in Lemma \ref{lem:hyperplane}, so that $D(\bc_1,s_1)\subset F((\si_{J_1})_{\phi_1}^\vee)^\circ$. 
By definition, $F((\si_{J_2})_{\phi_2}^{\vee})^\circ\subset C(\bc_2, c_{2,n+1})$. Therefore, the non-empty set 
$$D(\bc_1,s_1)-C(\bc_2, c_{2,n+1})\subset F((\si_{J_1})_{\phi_1}^\vee)^\circ-F((\si_{J_2})_{\phi_2}^{\vee})^\circ.$$ 
We will show that there is a deformation retract $$h_t: (F((\si_{J_1})_{\phi_1}^\vee)^\circ-F((\si_{J_2})_{\phi_2}^\vee)^\circ)\times [0,1]\to F((\si_{J_1})_{\phi_1}^\vee)^\circ-F((\si_{J_2})_{\phi_2}^\vee)^\circ,$$
such that $h_0=id$ and the image of $h_1$ is inside $D(\bc_1,s_1)-C(\bc_2, c_{2, n+1})$, 
while $h_1$ is the identity map on $D(\bc_1,s_1)-C(\bc_2, c_{2, n+1})$. Given $x=x_1 b_1^* +\dots+ x_nb_n^*\in M_\bR$, the retract $h_t$ is defined as
$$
h_t(x)=\begin{cases}
tx+(1-t)r_{J_1,\phi_1,i_{2,0}}(x), &\text{if }x\in F((\si_{J_1})_{\phi_1}^\vee)^\circ-(D(\bc_1,s_1)\cup F((\si_{J_2})_{\phi_2}^\vee)^\circ)\\
x,& \text{if }x\in D(\bc_1,s_1)-C(\bc_2, c_{2,n+1})\\
tx+(1-t)r_{J_2,\phi_2,i_{1,0}}'(x), &\text{if }x\in (C(\bc_2, c_{2,n+1})-F((\si_{J_2})_{\phi_2}^{\vee})^\circ)\cap F((\si_{J_1})_{\phi_1}^\vee)^\circ.
\end{cases}
$$
Since the closures of $D(\bc_1,s_1)$ and $C(\bc_2, c_{2,n+1})$ are dual cones of toric cones in a fan, 
$D(\bc_1,s_1)-C(\bc_2, c_{2,n+1})$ is contractible. Hence we conclude that 
$F((\si_{J_1})_{\phi_1}^\vee)^\circ-F((\si_{J_2})_{\phi_2}^{\vee})^\circ$ is contractible.

Case $l\ne n+1$: In this case, one may assume that $c_{1,n+1}\geq c_{2,n+1}$ since otherwise we might let $l$ to be $n+1$ and goes back the the previous case. 
Similarly, we define a deformation retract 
$$
h_t: (F((\si_{J_1})_{\phi_1}^\vee)^\circ-F((\si_{J_2})_{\phi_2}^{\vee})^\circ)\times [0,1]\to F((\si_{J_1})_{\phi_1}^\vee)^\circ-F((\si_{J_2})_{\phi_2}^{\vee})^\circ,
$$ given as below.
$$
h_t(x)=\begin{cases}
tx+(1-t)r_{J_1,\phi_1,i_{2,0}}(x), &\text{if } x\in F((\si_{J_1})_{\phi_1}^\vee)^\circ-(D(\bc_1,s_1)\cup F((\si_{J_2})_{\phi_2}^{\vee})^\circ);\\
x, & \text{if }x\in D(\bc_1,s_1)-F((\si_{J_2})_{\phi_2}^{\vee})^\circ.
\end{cases}
$$
Since $C(\bc_2,s_2)\subset F((\si_{J_2})_{\phi_2}^{\vee})^\circ\subset C(\bc_2,c_{2,n+1})$, we have 
$$
D(\bc_1,s_1)-C(\bc_2,c_{2,n+1}) \subset D(\bc_1,s_1)-F((\si_{J_2})_{\phi_2}^{\vee})^\circ
\subset D(\bc_1,s_1)-C(\bc_2,s_1).
$$
The fact that $c_{1,n+1}\geq c_{2,n+1}$ implies $s_1\geq s_2\geq c_{2,n+1}$. Therefore 
$$
D(\bc_1,s_1)-C(\bc_2,c_{2,n+1})=D(\bc_1,s_1)-C(\bc_2,s_2),
$$
and then 
$$
D(\bc_1,s_1)-F((\si_{J_2})_{\phi_2}^{\vee})^\circ\ = D(\bc_1,s_1)-C(\bc_2,c_{2,n+1})
$$ 
is a non-empty contractible set. The above deformation retract shows that $F((\si_{J_1})_{\phi_1}^\vee)^\circ-F((\si_{J_2})_{\phi_2}^{\vee})^\circ$ is also a contractible set.
\end{proof}

\begin{proposition}
\label{prop:III-hom-2}
If $n+1\not \in J_1$ or $n+1\not\in J_2$, then 
$$((\si_{J_1})^\vee_{\phi_1})^\circ\not\subset ((\si_{J_2})^\vee_{\phi_2})^\circ \implies \text{$F((\si_{J_1})^\vee_{\phi_1})^\circ-F((\si_{J_2})^\vee_{\phi_2})^\circ$ is a non-empty contractible set}.$$
\end{proposition}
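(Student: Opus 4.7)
My plan is to split the hypothesis into three disjoint subcases based on the location of $n+1$:
(i) $n+1\notin J_1$ and $n+1\notin J_2$;
(ii) $n+1\in J_1$ and $n+1\notin J_2$;
(iii) $n+1\notin J_1$ and $n+1\in J_2$.
In each subcase I will adapt the two-step deformation retract from the proof of Proposition \ref{prop:III-hom-1}, combining it with the inner and outer polyhedral approximations $D(\bc_k, s_k)\subset F((\si_{J_k})^\vee_{\phi_k})^\circ \subset C(\bc_k, c_{k,n+1})$ from Lemma \ref{lem:hyperplane} whenever the enlarged set is involved. A key observation throughout is that when $n+1\notin J_k$ one has $c_{k,n+1}=-\infty$, so that $C(\bc_k, c_{k,n+1})$ collapses to $((\si_{J_k})^\vee_{\phi_k})^\circ$ itself.

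In subcase (i), both Fourier--Mukai transformed sets coincide with the original interior dual cones by the convention stated after the definition of $F(\si_J)^\vee_\phi$, and the claim reduces to the classical contractibility statement for the difference of two shifted interior dual cones in the simplicial fan $\Si_2$. Non-emptiness is immediate from the hypothesis, and contractibility follows from a linear retract onto the intersection of $((\si_{J_1})^\vee_{\phi_1})^\circ$ with a witnessing closed half-space $\{x\mid \langle x, b_{l^*}\rangle \leq c_{2,l^*}\}$, where $l^*\in J_2$ is any index satisfying $c_{1,l^*}<c_{2,l^*}$ (with the convention $c_{1,l^*}=-\infty$ if $l^*\notin J_1$).

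In subcase (ii), I choose $i_{1,0}\in I'-J_1$ and $i_{2,0}\in I'-J_2$ (both non-empty since $J_1,J_2\in\Lambda$). Because $F((\si_{J_2})^\vee_{\phi_2})^\circ = ((\si_{J_2})^\vee_{\phi_2})^\circ = C(\bc_2, c_{2,n+1})$ here, the deformation retract $h_t$ from the proof of Proposition \ref{prop:III-hom-1} applies essentially verbatim: it pushes $F((\si_{J_1})^\vee_{\phi_1})^\circ - F((\si_{J_2})^\vee_{\phi_2})^\circ$ onto $D(\bc_1, s_1) - F((\si_{J_2})^\vee_{\phi_2})^\circ$, which is the intersection of a polyhedral cone with the complement of another polyhedral cone sharing compatible generators, and is therefore contractible. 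For non-emptiness, the hypothesis guarantees some index $j^*\in J_2$ with $c_{1,j^*}<c_{2,j^*}$, and any point of $D(\bc_1,s_1)$ with $\langle x, b_{j^*}\rangle\in (c_{1,j^*}, c_{2,j^*}]$ lies in the difference.

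Subcase (iii) is the mirror of (ii), and I expect it to be the main technical obstacle. Here $F((\si_{J_1})^\vee_{\phi_1})^\circ = ((\si_{J_1})^\vee_{\phi_1})^\circ$ is unaltered while $F((\si_{J_2})^\vee_{\phi_2})^\circ\subset C(\bc_2, c_{2,n+1})\subset ((\si_{J_2})^\vee_{\phi_2})^\circ$ is the shrunken enlarged image. Non-emptiness is automatic, because the hypothesis $((\si_{J_1})^\vee_{\phi_1})^\circ\not\subset ((\si_{J_2})^\vee_{\phi_2})^\circ$ combined with $F((\si_{J_2})^\vee_{\phi_2})^\circ\subset ((\si_{J_2})^\vee_{\phi_2})^\circ$ immediately forces $F((\si_{J_1})^\vee_{\phi_1})^\circ\not\subset F((\si_{J_2})^\vee_{\phi_2})^\circ$. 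For contractibility, my plan is to decompose
$$
F((\si_{J_1})^\vee_{\phi_1})^\circ - F((\si_{J_2})^\vee_{\phi_2})^\circ = \bigl(F((\si_{J_1})^\vee_{\phi_1})^\circ - C(\bc_2, c_{2,n+1})\bigr) \cup \bigl((F((\si_{J_1})^\vee_{\phi_1})^\circ\cap C(\bc_2, c_{2,n+1})) - F((\si_{J_2})^\vee_{\phi_2})^\circ\bigr)
$$
and define a two-step retract that first contracts the second piece radially away from the hyperplane $\{\langle x, b_{n+1}\rangle = c_{2,n+1}\}$ via a scaled version of $r'_{J_2,\phi_2,i_{1,0}}$, and then retracts the combined region onto $F((\si_{J_1})^\vee_{\phi_1})^\circ - C(\bc_2, c_{2,n+1})$, which is contractible by the same cone-minus-cone argument as in Proposition \ref{prop:III-hom-1}. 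The hard step will be verifying continuity of the retract across the piecewise-linear (countably-faceted) inner boundary of $F((\si_{J_2})^\vee_{\phi_2})^\circ$ within $C(\bc_2, c_{2,n+1})$, and choosing the scaling so that the intermediate images stay inside the ambient difference.
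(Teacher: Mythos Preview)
Your proposal is correct and follows the same three-case decomposition and the same retract constructions as the paper's proof; cases (i) and (ii) match the paper exactly. In case (iii) the paper is slightly simpler: it uses a single-step retract onto $C(\bc_1,c_{1,n+1})-C(\bc_2,c_{2,n+1})$, setting $h_t(x)=tx+(1-t)r'_{J_2,\phi_2,i_{1,0}}(x)$ for $x\in (C(\bc_2,c_{2,n+1})-F((\si_{J_2})^\vee_{\phi_2})^\circ)\cap C(\bc_1,c_{1,n+1})$ and $h_t=\mathrm{id}$ elsewhere, with no second step and no scaling.

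Your anticipated ``hard step'' dissolves once you observe two properties of the chosen direction $i_{1,0}\in I'-J_1$. First, since $i_{1,0}\notin J_1$ the coordinate $\langle x,b_{i_{1,0}}\rangle$ is unconstrained in $C(\bc_1,c_{1,n+1})$, so the straight-line path from $x$ to $r'_{J_2,\phi_2,i_{1,0}}(x)$ stays inside $C(\bc_1,c_{1,n+1})$. Second, since $i_{1,0}\in I'\subset J_2'$, every cone $((\si_{J_2'})^\vee_\chi)^\circ$ in the union defining $F((\si_{J_2})^\vee_{\phi_2})^\circ$ imposes a strict lower bound on $\langle x,b_{i_{1,0}}\rangle$; decreasing that coordinate therefore cannot enter $F((\si_{J_2})^\vee_{\phi_2})^\circ$. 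So the retract slides parallel to the countably-faceted boundary and never meets it --- the continuity and containment issues you flag do not arise.
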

\begin{proof}
The proof is similar to Proposition \ref{prop:III-hom-1}. There are three cases.

Case $n+1\not \in J_1$, and $n+1\in J_2$: Notice that 
$F((\si_{J_1})_{\phi_1}^\vee)^\circ=C(\bc_1,c_{1,n+1}).$ Let $i_{1,0}\in I'-J_1$, since $I'\not \subset J_1$. Define the deformation retract between $F((\si_{J_1})^\vee_{\phi_1})^\circ-F((\si_{J_2})^\vee_{\phi_2})^\circ$ and $C(\bc_1,c_{1,n+1})-C(\bc_2,c_{2,n+1})$ as
$$
h_t(x)=\begin{cases}
x,& \text{if }x\in C(\bc_1,c_{1,n+1})-C(\bc_2, c_{2,n+1})\\
tx+(1-t)r_{J_2,\phi_2,i_{1,0}}'(x), & \text{if }x\in (C(\bc_2, c_{2,n+1})-F((\si_{J_2})_{\phi_2}^{\vee})^\circ)\cap C(\bc_1,c_{1,n+1}).
\end{cases}
$$

Case $n+1\in J_1$, and $n+1\not\in J_2$: $F((\si_{J_2})_{\phi_2}^\vee)^\circ=C(\bc_2,c_{2,n+1}).$ Let $i_{2,0}\in I'-J_2$. 
Define the deformation retract between $F((\si_{J_1})^\vee_{\phi_1})^\circ-F((\si_{J_2})^\vee_{\phi_2})^\circ$ and 
$D(\bc_1,s_1)-C(\bc_2,c_{2,n+1})$ as
$$
h_t(x)=\begin{cases}
tx+(1-t)r_{J_1,\phi_1,i_{2,0}}(x), & \text{if }x\in F((\si_{J_1})_{\phi_1}^\vee)^\circ-
(D(\bc_1,s_1)\cup C(\bc_2, c_{2,n+1}))\\
x,&  \text{if }x\in D(\bc_1,s_1)-C(\bc_2, c_{2,n+1}).
\end{cases}
$$

Case $n+1\not\in J_1$ and $n+1\not\in J_2$: 
This is trivial since $F((\si_{J_2})_{\phi_2}^\vee)^\circ=C(\bc_2,c_{2,n+1})$ and $F((\si_{J_1})_{\phi_1}^\vee)^\circ=C(\bc_1,c_{1,n+1})$.

\end{proof}

\begin{theorem}
The functors $F_{21}'$ and $F_{21}$ are quasi-embeddings. If restricted on $\Coh_T(\cX_1)$, $F_{21}'$ is a quasi-embedding of $\Coh_T(\cX_1)$ into $\Coh_T(\cX_2)$.
\end{theorem}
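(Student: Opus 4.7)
The plan is to mirror the strategy used in the proof of the analogous theorem in Section \ref{sec:caseII}: pass through the CCC to the constructible side, reduce the full-and-faithfulness statement to a pairwise $\Ext$ computation on generating theta sheaves, and then read off the answer from the geometric description of the Fourier--Mukai transformed sets established in Proposition \ref{prop:III-F} together with the contractibility statements in Propositions \ref{prop:III-hom-1} and \ref{prop:III-hom-2}.

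First I would note that by Theorem \ref{thm:ccc_theta} (and the intermediate category $\ltr_{\bSi_1}$), it is enough to prove that the constructible-side functor $F_{21}:\ltr_{\bSi_1}\to Sh_c(M_\bR;\Lambda_{\bSi_2})$ is fully faithful on cohomology, since the constructible theta sheaves $\Theta_1(\si_J,\phi)$ generate $\ltr_{\bSi_1}$ and $F_{21}'$ corresponds to $F_{21}$ under $\kappa$ by construction. On the source side, for any pair $(\si_{J_1},\phi_1), (\si_{J_2},\phi_2)\in\Ga(\bSi_1)$ one has the standard identity
\[
\Ext^*(\Theta_1(\si_{J_1},\phi_1),\Theta_1(\si_{J_2},\phi_2))=
\begin{cases}\bC[0], & ((\si_{J_1})^\vee_{\phi_1})^\circ\subset ((\si_{J_2})^\vee_{\phi_2})^\circ,\\ 0,& \text{otherwise},\end{cases}
\]
which comes directly from the description of costandard sheaves on open convex sets.

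Next I would compute the analogous $\Ext$ groups on the target. By Proposition \ref{prop:III-F}, $F_{21}(\Theta_1(\si_J,\phi))\cong i_!\omega_{F((\si_J)^\vee_\phi)^\circ}$, so it suffices to show
\[
\Ext^*\!\left(i_!\omega_{F((\si_{J_1})^\vee_{\phi_1})^\circ},\, i_!\omega_{F((\si_{J_2})^\vee_{\phi_2})^\circ}\right)
=\begin{cases}\bC[0], & F((\si_{J_1})^\vee_{\phi_1})^\circ\subset F((\si_{J_2})^\vee_{\phi_2})^\circ,\\ 0,& \text{otherwise}.\end{cases}
\]
For the containment case this follows from the general computation of hom between costandards on nested open sets. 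For the non-containment case, the key input is exactly Propositions \ref{prop:III-hom-1} and \ref{prop:III-hom-2}: they guarantee that whenever the source inclusion fails, the target sets also fail to be nested \emph{and} the set-theoretic difference $F((\si_{J_1})^\vee_{\phi_1})^\circ- F((\si_{J_2})^\vee_{\phi_2})^\circ$ is contractible, which forces the corresponding $\Ext$ to vanish via a Mayer--Vietoris / cofiber-sequence argument applied to the costandards. Combined with the source-side computation and the observation (from Proposition \ref{pro:II-hom}-style monotonicity) that the source containment is equivalent to the target containment, this yields a bijection on hom spaces, concluding cohomological full-faithfulness.

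Finally, the restriction statement is immediate: by \v{C}ech resolution $\Coh_T(\cX_1)$ sits as a full dg subcategory of $\ltrp_{\bSi_1}$ (as recorded before Theorem \ref{thm:ccc}), so the full-and-faithfulness of $F_{21}'$ on the whole of $\ltrp_{\bSi_1}$ restricts to $\Coh_T(\cX_1)$; one checks separately that its image is contained in $\Coh_T(\cX_2)$ by noting that $\mu_1^*$ preserves perfect complexes and $\mu_{2*}$ applied to a perfect complex on $\cW$ lands in $\Perf_T(\cX_2)=\Coh_T(\cX_2)$ since $\mu_2$ is proper of finite Tor-dimension between smooth DM stacks. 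The main obstacle is already absorbed into Propositions \ref{prop:III-hom-1} and \ref{prop:III-hom-2}: the delicate geometric fact that the ``Fourier--Mukai transformed sets'' $F((\si_J)^\vee_\phi)^\circ$, which are a priori unions of countably many dual cones indexed by $\Gamma$, nonetheless have a sufficiently convex shape for their set-theoretic differences to remain contractible; without this contractibility, the infinite direct sums appearing in the Koszul-type resolution of $F'_{21}\Theta'_1(\si_J,\phi)$ would obstruct the vanishing of $\Ext^*$ on the target side.
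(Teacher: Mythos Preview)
Your proposal is correct and follows essentially the same route as the paper's proof: reduce to the constructible side via Proposition \ref{prop:III-F}, compute $\Ext^*$ between the costandards $i_!\omega_{F((\si_J)^\vee_\phi)^\circ}$ using the containment/non-containment dichotomy, and invoke Propositions \ref{prop:III-hom-1} and \ref{prop:III-hom-2} for the contractibility of the difference in the non-containment case, together with the forward implication $((\si_{J_1})^\vee_{\phi_1})^\circ\subset ((\si_{J_2})^\vee_{\phi_2})^\circ \Rightarrow F((\si_{J_1})^\vee_{\phi_1})^\circ\subset F((\si_{J_2})^\vee_{\phi_2})^\circ$. One small point: your reference to ``Proposition \ref{pro:II-hom}-style monotonicity'' for this forward implication is slightly misplaced, since that proposition concerns $F_{12}$; here the paper simply records the forward direction as a direct check from the definition of the sets $F((\si_J)^\vee_\phi)^\circ$, while the reverse direction is already contained in Propositions \ref{prop:III-hom-1}--\ref{prop:III-hom-2}. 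Your extra remark that the image lands in $\Coh_T(\cX_2)$ (via properness of $\mu_2$) is a useful addition the paper leaves implicit.
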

\begin{proof}
One only needs to show that $F_{21}$ is a cohomologically full and faithful functor. Since we have 
$$
\Ext^*(\Theta_1(\si_{J_1},\phi_1),\Theta_1(\si_{J_2},\phi_2)) =
\begin{cases}
\bC[0] &\text{if $((\si_{J_1})_{\phi_1}^\vee)^\circ \subset ((\si_{J_2})_{\phi_2}^{\vee})^\circ$},\\
0 &\text{if $((\si_{J_1})_{\phi_1}^\vee)^\circ \not\subset ((\si_{J_2})_{\phi_2}^{\vee})^\circ$},
\end{cases}
$$
and
\begin{eqnarray*}
&&
\Ext^*(i_{F((\si_{J_1})_{\phi_1}^\vee)^\circ!}F((\si_{J_1})_{\phi_1}^\vee)^\circ,i_{F((\si_{J_2})_{\phi_2}^\vee)^\circ!}F((\si_{J_2})_{\phi_2}^\vee)^\circ\\
&=&\begin{cases}
\bC[0] & \text{if $F((\si_{J_1})_{\phi_1}^\vee)^\circ \subset F((\si_{J_2})_{\phi_2}^{\vee})^\circ$,}\\
\\
0 &\text{\parbox[t]{10cm}{$F((\si_{J_1})_{\phi_1}^\vee)^\circ \not\subset F((\si_{J_2})_{\phi_2}^{\vee})^\circ$ and $F((\si_{J_1})^\vee_{\phi_1})^\circ-F((\si_{J_2})^\vee_{\phi_2})^\circ$ is contractible,}}
\end{cases}
\end{eqnarray*}
the desired result follows immediately from Proposition \ref{prop:III-F}, Proposition \ref{prop:III-hom-1} and Proposition \ref{prop:III-hom-2}, and the simple fact that
$$((\si_{J_1})_{\phi_1}^\vee)^\circ \subset ((\si_{J_2})_{\phi_2}^{\vee})^\circ \implies F((\si_{J_1})_{\phi_1}^\vee)^\circ \subset F((\si_{J_2})_{\phi_2}^{\vee})^\circ.$$ 
\end{proof}

\begin{figure}[h]
\psfrag{F}{\small $F_{21}$}
\includegraphics[scale=0.5]{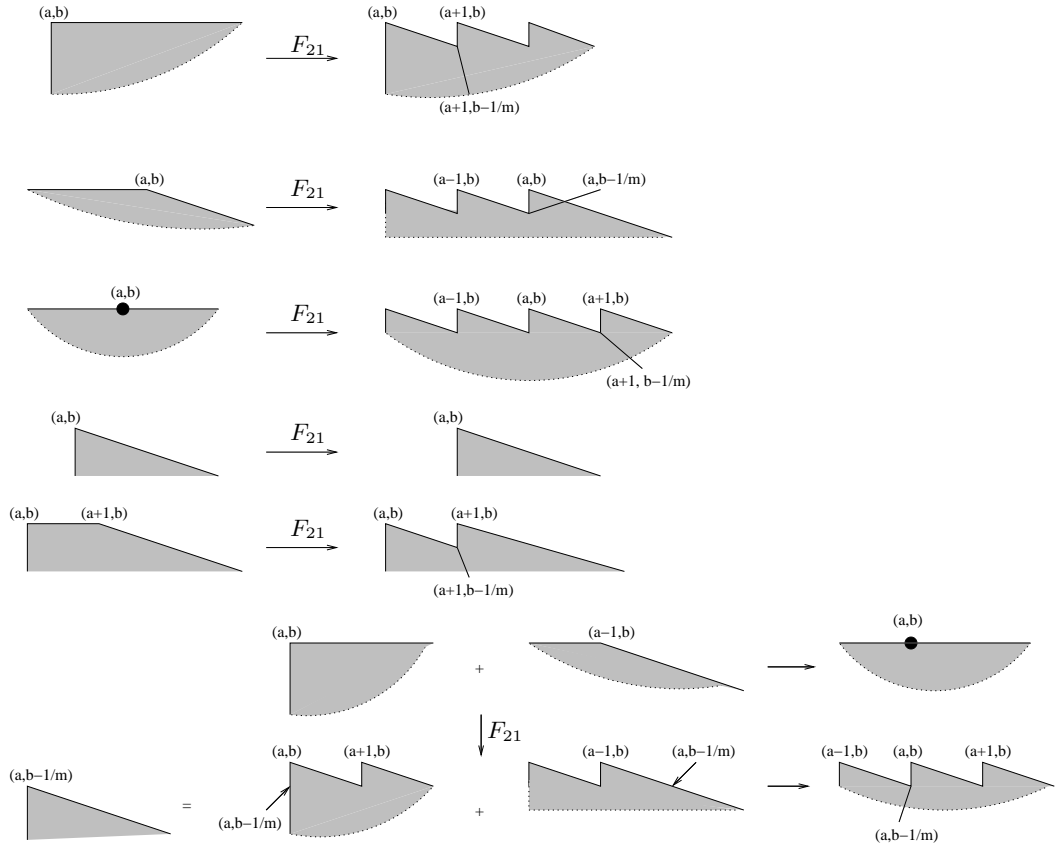}
\caption{$a$ and $b$ are integers. The constructible sheaves are costandard sheaves over the shaded regions.}
\label{fig:III}
\end{figure}

\begin{example}
$N=\bZ^2$, 
$$
\beta_1=\left[ \begin{array}{rrr}  1& -1 & 0\\ 0 & -m & -1 \end{array} \right],\quad
\beta_2=\left[ \begin{array}{rr}  1& -1\\ 0 & -m \end{array} \right],\quad
\beta'=\left[\begin{array}{rrr} 1& -1 & 0\\ 0 & -m & -m \end{array} \right].
$$
$\cX_1$ is the total space of $\cO_{\bP^1}(-m)$, and
$\cX_2=[\bC^2/\bZ_m]$.
\begin{eqnarray*}
&& v_1=b_1=(1,0), \quad v_2=b_2= (-1,-m),\quad v_3=b_3= (0,-1), \\
&& r_1=r_2=r_3=1,\quad r_3'=m,\quad a_1=a_2=\alpha_1=\alpha_2=\frac{1}{m}, \quad a_1'= a_2'=\beta_1=\beta_2=1.
\end{eqnarray*}
$$
\frac{a_1}{r_1}+\frac{a_2}{r_2} \leq \frac{1}{r_3}\Leftrightarrow m\geq 2.
$$

The Fourier-Mukai functors for constructible sheaves are shown in Figure \ref{fig:III}.
\end{example}

\end{document}